\documentclass[11pt]{amsart}
\usepackage[dvips]{color}
\usepackage{amsmath}
\usepackage{amsxtra}
\usepackage{amscd}
\usepackage{amsthm}
\usepackage{amsfonts}
\usepackage{amssymb}
\usepackage{eucal}
\usepackage{epsfig}
\setlength{\hoffset}{-1in}
\setlength{\voffset}{-1in}
\setlength{\oddsidemargin}{1in}
\setlength{\evensidemargin}{1in}
\setlength{\textwidth}{6.in}
\setlength{\textheight}{8in}
\setlength{\topmargin}{1in}
\setlength{\baselineskip}{14pt}
\theoremstyle{plain}
\newtheorem{theorem}{Theorem}[section]

\newtheorem{thm}[theorem]{Theorem}
\newtheorem{cor}[theorem]{Corollary}
\newtheorem{prop}[theorem]{Proposition}
\newtheorem{lem}[theorem]{Lemma}

\newtheorem{defi}[theorem]{Definition}
\theoremstyle{definition}

\newtheorem{example}{Example}

\newcommand{\Glie}{\mathfrak{g}}             
\newcommand{\Gaff}{\widehat{\mathfrak{g}}}   
\newcommand{\Gafft}{\widehat{\mathfrak{g}'}} 
\newcommand{\Uc}{\mathcal{U}}                
\newcommand{\ev}{\textrm{ev}}                
\newcommand{\Dc}{\mathcal{D}}                
\newcommand{\CF}{\mathcal{F}}                
\newcommand{\CR}{\textbf{R}}                 

\newcommand{\BC}{\mathbb{C}}            
\newcommand{\BZ}{\mathbb{Z}}            

\newcommand{\BV}{\textbf{V}}             
\newcommand{\BQ}{\textbf{Q}}             
\newcommand{\BP}{\textbf{P}}             
\newcommand{\End}{\textrm{End}}          
\newcommand{\Id}{\textrm{Id}}            
\newcommand{\CB}{\mathcal{B}}            
\newcommand{\Sm}{\mathbb{S}}             
\newcommand{\Rc}{\mathcal{R}}            

\newcommand{\super}{\mathbb{Z}_2}        
\newcommand{\even}{\overline{0}}         
\newcommand{\odd}{\overline{1}}          

\newtheorem{rem}[theorem]{Remark}
\makeatletter
\def\ExtendSymbol#1#2#3#4#5{\ext@arrow 0099{\arrowfill@#1#2#3}{#4}{#5}}
\def\RightExtendSymbol#1#2#3#4#5{\ext@arrow 0359{\arrowfill@#1#2#3}{#4}{#5}}
\def\LeftExtendSymbol#1#2#3#4#5{\ext@arrow 6095{\arrowfill@#1#2#3}{#4}{#5}}
\makeatother


\allowdisplaybreaks                

\begin{document}
\begin{title}[Quantum affine superalgebras]
{RTT realization of Quantum affine superalgebras and tensor products}
\end{title}
\author{Huafeng Zhang}
\address{Universit{\'e} Paris Diderot - Paris 7,  Institut de Math{\'e}matiques de Jussieu - Paris Rive Gauche CNRS UMR 7586, B\^{a}timent Sophie Germain, Case 7012, 75025 Paris Cedex 13, France}
\email{huafeng.zhang@imj-prg.fr}
\begin{abstract}
We use the RTT realization of the quantum affine superalgebra associated with the Lie superalgebra $\mathfrak{gl}(M,N)$ to study its finite-dimensional representations and their tensor products. In the case $\mathfrak{gl}(1,1)$,  the cyclicity condition of tensor products of finite-dimensional simple modules is determined completely in terms of zeros and poles of rational functions. This in turn induces cyclicity of some particular tensor products of Kirillov-Reshetikhin modules related to $\mathfrak{gl}(M,N)$.

\end{abstract}

\maketitle
\setcounter{tocdepth}{1}
\section{Introduction}
Let $q$ be a non-zero complex number which is not a root of unity.
Let $\Glie := \mathfrak{gl}(M,N)$ be the {\it general linear Lie superalgebra}. Let $U_q(\Gaff)$ be the associated quantum affine superalgebra. This is a Hopf superalgebra neither commutative nor co-commutative, and it can be seen as a deformation of the universal enveloping algebra of the following affine Lie superalgebra $L \Glie := \Glie \otimes \BC[t,t^{-1}]$.
In this paper we are mainly concerned with the structure of tensor products of finite-dimensional simple $U_q(\Gaff)$-modules.
\subsection{Backgrounds.} Quantum superalgebras appear as the algebraic supersymmetries of some solvable models. For example, in the supersymmetric $t-J$ model,  the quantum affine superalgebra $U_q(\widehat{\mathfrak{sl}(M,N)})$ together with its highest weight Fock representations \cite{Kojima} is used to diagonalize the commuting transfer matrices and to find the spectra of this integrable model. 
Another main interest in quantum superalgebras comes from the integrability structure in the context of the AdS/CFT correspondence \cite{Beisert1}. In this case, based on the simple Lie superalgebra $\mathfrak{psl}(2,2)$, several quantum superalgebras have been built as algebraic supersymmetries: the quantum deformation of extended $\mathfrak{sl}(2,2)$ in \cite{Beisert2}, the quantum affine deformation of extended $\mathfrak{sl}(2,2)$ in \cite{Beisert3}, and the conventional Yangian of extended $\mathfrak{sl}(2,2)$ in \cite{Beisert4,Beisert5}, to name a few. Representations of these superalgebras have been considered from different perspectives: \cite{Beisert6,Molev1} for centrally extended $\mathfrak{sl}(2,2)$, \cite{ADT} for the conventional Yangian, and \cite{Beisert2,Beisert3} for the quantum (affine) superalgebra of extended $\mathfrak{sl}(2,2)$.

More closely related to our present paper is the work of Bazhanov-Tsuboi \cite{BT} on Baxter's $\textbf{Q}$-operators related to the quantum affine superalgebra $U_q(\widehat{\mathfrak{sl}(2,1)})$. In {\it loc. cit} they constructed the so-called {\it oscillator representations} of the upper Borel subalgebra $\mathfrak{B}_+$. These representations gave rise directly to the $\textbf{Q}$-operators and therefore found remarkable applications in spin chain models and in quantum field theory. Their oscillation construction has been generalized to the quantum affine superalgebra $U_q(\widehat{\mathfrak{gl}(M,N)})$ in \cite{Tsuboi}. 

On the other hand, Hernandez-Jimbo \cite{HJ} constructed similar oscillator representations of the upper Borel subalgebra $\mathfrak{B}_+$ of an arbitrary non-twisted quantum affine algebra. In their context, oscillator representations were realized as certain asymptotic limits of Kirillov-Reshetikhin modules over the quantum affine algebra. The asymptotic construction enabled Frenkel-Hernandez \cite{FH} to give a representation theoretic interpretation of Baxter's $\textbf{T-Q}$ relations and to solve a conjecture of Frenkel-Reshetikhin on the spectra of quantum integrable systems \cite{FR}.


We are motivated by the following question: can the oscillator representations related to the quantum affine superalgebra $U_q(\Gaff)$ in \cite{BT,Tsuboi} be realized as asymptotic limits of Kirillov-Reshetikhin modules in the spirit of Hernandez-Jimbo? 

In the super case, the representation theory of quantum affine superalgebras is still less developed, compared to the vast literature on representations of quantum affine algebras (see the two review papers \cite{CH,L}). Let $I_0 := \{1,2,\cdots,M+N-1\}$.
 In a recent paper \cite{Z}, it was shown that, up to tensor product by one-dimensional modules, finite-dimensional simple $U_q(\Gaff)$-modules are parametrized by $I_0$-tuple of rational functions $\underline{f} =  (f_i(z))_{i\in I_0} \in \BC(z)^{I_0}$ such that:
\begin{itemize}
\item[(a)] if $i \neq M$ then there exists a polynomial $P_i(z)$ with constant term $1$ such that $f_i(z) = q_i^{\deg P_i} \frac{P_i(zq_i^{-1})}{P_i(zq_i)}$. Here $q_i = q$ for $i \leq M$ and $q^{-1}$ otherwise;  
\item[(b)] if $i = M$, then $f_i(z)$ as a meromorphic function is regular at $z = 0$ and $z = \infty$. Moreover, $f_i(0) f_i(\infty) = 1$.
\end{itemize}
Let $S(\underline{f})$ be the simple module associated to $\underline{f}$. In analogy with the non-graded case, Kirillov-Reshetikhin modules for $U_q(\Gaff)$ will be those modules $S(\varpi_{n,a}^{(i)})$ where $i \in I_0$ is a Dynkin node, $a \in \BC^{\times}$ a spectral parameter, $n \in \BZ_{>0}$ a positive integer, and $\varpi_{n,a}^{(i)}$  the $I_0$-tuple of rational functions whose $i$-th coordinate is $q_i^n\frac{1-zaq_i^{-n}}{1-zaq_i^n}$ and whose other coordinates are $1$. When $n = 1$, the Kirillov-Reshetikhin modules are called {\it fundamental modules}.

 Let us fix $i \in I_0$ and $a \in \BC^{\times}$. For $n \in \BZ_{> 0}$, the $i$-th coordinate for $\varpi_{n,aq_i^n}^{(i)}$ has the asymptotic expression $q_i^n \frac{1-za}{1-zaq_i^{2n}}$.
Informally, by taking asymptotic limit of the Kirillov-Reshetikhin modules $S(\varpi_{n,aq_i^n}^{(i)})$ we should get a \lq\lq module\rq\rq\ where the $i$-th coordinate is $1-za$ (by first forgetting the constant term $q_i^n$ and then taking the analysis limit $\lim_{n \rightarrow \infty} q_i^n = 0$).
This module should be an oscillator module. 

In the non-graded case, the asymptotic limits above come essentially from inductive systems of Kirillov-Reshetikhin modules constructed by Hernandez-Jimbo \cite{HJ}.

We would like to generalize the construction of Hernandez-Jimbo to the super case. A careful look at their construction indicates that the following cyclicity result should be a crucial step to establish inductive systems of Kirillov-Reshetikhin modules:
\begin{itemize}
\item[(C)] the tensor products $\bigotimes_{l=1}^k S(\varpi_{1,aq_j^{-2l}}^{(j)})$ for $k \geq 1$ are of highest $\ell$-weight with respect to the Drinfeld type triangular decomposition of $U_q(\Gaff)$.
\end{itemize}

Again in the non-graded case, (C) is a direct corollary of more general cyclicity results on tensor products of fundamental modules over quantum affine algebras, established by Kashiwara et al. \cite{AK,Kashiwara} using crystal bases, by Varagnolo-Vasserot \cite{VV} using quiver geometry, and by Chari \cite{Chari} using Weyl/braid group actions. 

Crystal base theory and quiver geometry for quantum affine superalgebras, or even for finite type quantum superalgebras, are still less developed \cite{BKK}. The main drawback comes from the fact that the Weyl group of $\Glie$, being $\mathfrak{S}_M \times \mathfrak{S}_N$ instead of $\mathfrak{S}_{M+N}$, is too small to capture enough information on weights and linkage. Nevertheless, in the present paper, we shall prove (C) directly by using the RTT realization of the quantum affine superalgebra $U_q(\Gaff)$ and by modifying Chari's reduction argument in \cite{Chari}.
\subsection{Main results.} Our first main result of this paper is a sufficient condition for the tensor products of fundamental modules $S(\varpi_{1,a}^{(i)})$ with $i \in I_0$ fixed to be cyclic (Theorem \ref{thm: cyclicity of tensor product of KR modules}).

The idea of proof follows largely that of Chari \cite{Chari}. The RTT generators will be used to reduce modules over $U_q(\Gaff)$ to those over the $q$-Yangian $Y_q(\mathfrak{gl}(1,1))$.  Here the $q$-Yangian is an upper Borel subalgebra  generated by half of the RTT generators.

Our second main result (Theorem \ref{thm: web property}) is on  representation theory of $Y_q(\mathfrak{gl}(1,1))$.
\begin{itemize}
\item[(1)] There is a classification of finite-dimensional simple modules, up to tensor product by one-dimensional modules, in terms of highest $\ell$-weights parametrized by the set $\CR$  of such rational functions $f(z)$ that $f(0) = 1$ (hence regular at $z = 0$). Let $V(f)$ be the simple module of highest $\ell$-weight  $f$.  
\item[(2)] For $f_1,\cdots,f_k \in \CR$, the tensor product $\bigotimes_{j=1}^k V(f_j)$ is of highest $\ell$-weight (resp. of lowest $\ell$-weight) if and only if: for all $1 \leq i < j \leq k$ (resp. for all $1 \leq j < i \leq k$) the set of poles of $f_i$ does not intersect with the set of zeros of $f_j$. 
\item[(3)] The tensor product in (2) is simple if and only if it is of highest and lowest $\ell$-weight. 
\end{itemize}
We remark that (2) is not true for the quantum affine algebra $U_q(\widehat{\mathfrak{sl}_2})$, as seen in \cite{CP1,Mukhin} where the conditions for a tensor product of simple $U_q(\widehat{\mathfrak{sl}_2})$-modules to be cyclic are more sophisticated. Also, in the non-graded case due to the Weyl group action a tensor product of simple modules is of highest $\ell$-weight if and only if it is of lowest $\ell$-weight. Hence (3) is really a special feature in the super case. 

The constructions of inductive systems of Kirillov-Reshetikhin apart from Theorem \ref{thm: cyclicity of tensor product of KR modules} and their asymptotic limits are left to a separate paper \cite{Z2}. Already, the $Y_q(\mathfrak{gl}(1,1))$-modules $V(1-z)$ and $V(\frac{1}{1-z})$ can be viewed as prototypes of asymptotic modules.

At last, we would like to point out that nearly all the results in the present paper have direct analogues when replacing the quantum affine superalgebra $U_q(\Gaff)$ (or the $q$-Yangian) by the Yangian $Y(\Glie)$,  a deformation of the universal enveloping algebra of the current Lie superalgebra $\Glie \otimes \BC[t]$. The proofs of these results are essentially the same, as $Y(\Glie)$  admits a similar RTT realization \cite{Zrb2}. In \cite[Theorem 5]{Zrb1}, a similar criteria for a tensor product of finite-dimensional simple $Y(\mathfrak{gl}(1,1))$-modules to be simple was given  by Rui-Bin Zhang with a quite different approach from ours. Cyclicity of tensor products and Drinfeld realization for the Yangian were not considered there in full generality. 

 The paper is organized as follows. \S \ref{sec: preparation} collects some basic facts about highest weight representations of the finite type quantum superalgebra $\Uc_q(\Glie)$. In \S \ref{sec: definition} we study in detail the RTT realization of the quantum affine superalgebra $U_q(\Gaff)$, review the Ding-Frenkel homomorphism between Drinfeld realization and RTT realization, and give an estimation for coproduct of Drinfeld generators (Proposition \ref{prop: Drinfeld coproduct estimation}), which is used to prove Chari's lemma in the super case (Lemma \ref{lem: cyclicity Chari}). \S \ref{sec: Yangian} discusses finite-dimensional representation theory for the $q$-Yangian $Y_q(\mathfrak{gl}(1,1))$, which is used in \S \ref{sec: proof} to prove the main Theorem \ref{thm: cyclicity of tensor product of KR modules}. In \S \ref{sec: app}, we give a proof of Proposition \ref{prop: Drinfeld coproduct estimation} on coproduct of Drinfeld generators. 

\subsection*{Acknowledgments.} The author is grateful to his supervisor David Hernandez for numerous discussions, and to Paul Zinn-Justin from whom he learned the notion of a Yang-Baxter algebra in his course \lq\lq Int\'{e}grabilit\'{e} Quantique\rq\rq\ at Universit\'{e}\ Paris 6.  

Part of the present work was done while the author was visiting Centre de Recherches Math\'{e}matiques in Montr\'{e}al and was participating in the workshops \lq\lq Combinatorial Representation Theory\rq\rq\ in Montr\'{e}al and \lq\lq Yangians and Quantum Loop Algebras\rq\rq\ in Austin. He is grateful to the organizers for hospitality and to Vyjayanthi Chari, Sachin Gautam and Valerio Toledano Laredo for stimulating discussions.

\section{Preliminaries}  \label{sec: preparation}
In this section, we introduce the basic notations concerning the finite type quantum superalgebra $\Uc_q(\mathfrak{gl}(M,N))$ and its representations. Following Benkart-Kang-Kashiwara we review the character formula for fundamental representations.

 Let $V = V_{\even} \oplus V_{\odd}$ be a vector superspace. We write $|x| = i$ for $i \in \super$ and $x \in V_i$. If $V = \oplus_{\alpha \in P} (V)_{\alpha}$ is graded by an abelian group, we write $|x|_P = \alpha$ for $\alpha \in P$ and $x \in (V)_{\alpha}$.   

Fix $M,N \in \BZ_{\geq 0}$. Set $I := \{1,2,\cdots,M+N\}$. Define two maps as follows:
\begin{displaymath}
|\cdot|: I \longrightarrow \super, i \mapsto |i| =: \begin{cases}
\even & (i\leq M),  \\
\odd & (i > M);
\end{cases} \quad d_{\cdot}: I \longrightarrow \BZ, i \mapsto d_i := \begin{cases}
1 & (i \leq M),\\
-1 & (i > M).
\end{cases}
\end{displaymath}
Set $q_i := q^{d_i}$. Set $\BP := \oplus_{i \in I} \BZ \epsilon_i$. Let $(,): \BP \times \BP \longrightarrow \BZ$ be the bilinear form defined by $(\epsilon_i,\epsilon_j) = \delta_{ij}d_i$. Let $|\cdot|: \BP \longrightarrow \super$ be the morphism of abelian groups such that $|\epsilon_i| = |i|$.

In the following, we only consider the parity $|x| \in \super$ of $x$ when either $x \in I, x \in \BP$ or $x$ is a $\super$-homogeneous vector of a vector superspace. 

If $A$ is a superalgebra, then by bracket we always mean $[a,b] := a b - (-1)^{|a||b|} ba$ for $\super$-homogeneous $a,b \in A$. Unless otherwise stated, $\Glie$ will always be the general linear Lie superalgebra $\mathfrak{gl}(M,N)$, while $\Glie' = \mathfrak{gl}(N,M)$.
Set $I_0 := I \setminus \{M+N\}$. For $i \in I_0$, set $\alpha_i := \epsilon_i - \epsilon_{i+1} \in \BP$. Introduce the root lattice $\BQ := \oplus_{i\in I_0} \BZ \alpha_i \subset \BP$. Define $\BQ_{\geq 0} := \oplus_{i\in I_0} \BZ_{\geq 0} \alpha_i$.
\subsection{The quantum superalgebra $\Uc_q(\Glie)$.} This is the superalgebra defined by generators $e_i^{\pm}, t_j^{\pm 1}$ ($i \in I_0,\ j \in I$) with  $\super$-degrees $|e_i^{\pm}| = |\alpha_i|$ and $|t_j^{\pm 1}| = 0$ and with relations
\begin{align*}
& t_j t_j^{-1} = t_j^{-1} t_j = 1,\quad t_i t_j = t_j t_i \quad (i,j \in I),    \\
& t_i e_j^{\pm} t_i^{-1} = q^{\pm d_i (\epsilon_i, \epsilon_j - \epsilon_{j+1})} e_j^{\pm} \quad (i \in I, j \in I_0),   \\
& [e_i^+,e_j^-] = \delta_{ij} \frac{t_i^{d_i}t_{i+1}^{-d_{i+1}}- t_i^{-d_i}t_{i+1}^{d_{i+1}} }{q_i - q_i^{-1}} \quad (i,j \in I_0),
\end{align*}
together with Serre relations which we do not repeat (see \cite[\S 2.2]{Z} and its references). $\Uc_q(\Glie)$ has a Hopf superalgebra structure with coproduct: for $i \in I_0, j \in I$
\begin{align*}
& \Delta (e_i^+) = 1 \otimes e_i^+ + e_i^+ \otimes t_i^{-d_i}t_{i+1}^{d_{i+1}},\quad \Delta(e_i^-) = t_i^{d_i}t_{i+1}^{-d_{i+1}} \otimes e_i^- + e_i^- \otimes 1, \quad \Delta(t_j) = t_j \otimes t_j. 
\end{align*}
There exists a $\BQ$-grading on $\Uc_q(\Glie)$ respecting the Hopf superalgebra structure:
\begin{displaymath}
|t_j|_{\BQ} = 0, \quad |e_i^{\pm}|_{\BQ} = \pm \alpha_i \quad (i \in I_0,\ j \in I).
\end{displaymath}
In general, for $\alpha \in \BQ$, we have
\begin{displaymath}
(\Uc_q(\Glie))_{\alpha} = \{x \in \Uc_q(\Glie)\ |\ t_i x t_i^{-1} = q^{d_i(\epsilon_i,\alpha)} x\ \textrm{for}\ i \in I  \}.
\end{displaymath}
This $\BQ$-grading is compatible with the $\super$-grading: $(\Uc_q(\Glie))_{\alpha} \subset \Uc_q(\Glie)_{|\alpha|}$ for $\alpha \in \BQ$. 

\subsection{Highest weight representations.} Let $\lambda \in \BP$. Up to isomorphism, there exists a unique simple $\Uc_q(\Glie)$-module, denoted by $L(\lambda)$, which is generated by a vector $v_{\lambda}$ satisfying:
\begin{displaymath}
|v_{\lambda}| = |\lambda|,\quad e_i^+ v_{\lambda} = 0, \quad t_j v_{\lambda} = q^{d_j(\epsilon_j,\lambda)} v_{\lambda} \quad (i \in I_0, j \in I).
\end{displaymath}
The action of the $t_j$ endows  $L(\lambda)$ with the following $\BP$-grading: for $\mu \in \BP$
\begin{displaymath}
(L(\lambda))_{\mu} := \{x \in L(\lambda)\ |\ t_j x = q^{d_j(\epsilon_j,\mu)} x\ \textrm{for}\ j \in I\},\quad L(\lambda) = \bigoplus_{\mu \in \BP} (L(\lambda))_{\mu}.
\end{displaymath}

\begin{example}   \label{example: natural representation quantum superalgebra}
(Natural representation.) Let $\BV = \oplus_{i\in I} \BC v_i$ be the vector superspace with $\super$-grading $|v_i| = |i|$. On $\BV$ there is a natural representation $\rho_{(0)}$ of $\Uc_q(\Glie)$ defined by: 
\begin{displaymath}
\rho_{(0)}(e_i^+) = E_{i,i+1},\quad \rho_{(0)}(e_i^-) = E_{i+1,i},\quad \rho_{(0)}(t_j) = \sum_{k\in I} q^{d_j(\epsilon_j,\epsilon_k)} E_{kk} \quad (i \in I_0, j \in I).
\end{displaymath}
Here the $E_{ij} \in \End(\BV)$ for $i,j \in I$ are defined by $E_{ij} v_k = \delta_{jk}v_i$. Clearly, $\BV = L(\epsilon_1)$ as a $\Uc_q(\Glie)$-module with $v_1$ a highest weight vector, and $(\BV)_{\epsilon_i} = \BC v_i$ for $i \in I$. 
Consider the tensor product $\BV^{\otimes 2}$ as a $\Uc_q(\Glie)$-module. Define subspaces
\begin{align}
& \BV^{+} := \bigoplus_{1 \leq i < j \leq M+N} \BC(qv_i \otimes v_j + (-1)^{|i||j|} v_j \otimes v_i) \oplus \bigoplus_{k=1}^M \BC (v_k \otimes v_k),     \label{def: quantum symmetric space}    \\
& \BV^{-} := \bigoplus_{1 \leq i < j \leq M+N} \BC(q^{-1}v_i \otimes v_j - (-1)^{|i||j|} v_j \otimes v_i) \oplus \bigoplus_{k=1}^N \BC (v_{M+k} \otimes v_{M+k}).    \label{def: quantum anti-symmetric space}
\end{align}
Then $\BV^{\otimes 2} = \BV^{+} \oplus \BV^-$ is a decomposition of $\Uc_q(\Glie)$-modules as follows:
\begin{displaymath}
L(\epsilon_1)^{\otimes 2} = L(2 \epsilon_1) \oplus L(\epsilon_1 + \epsilon_2).
\end{displaymath} 
\end{example}
 Let $V$ be a finite-dimensional $\Uc_q(\Glie)$-module $\BP$-graded via the action of the $t_i$ in the same way as $L(\lambda)$ above.
Define its {\it character} to be
\begin{equation}   \label{def: classical character}
\chi (V) := \sum_{\mu \in \BP} \dim (V)_{\mu} [\mu] \in \BZ[\BP].
\end{equation}
By definition, it is clear that $(\Uc_q(\Glie))_{\alpha} (V)_{\mu} \subseteq (V)_{\alpha+\mu}$ for  $\alpha \in \BQ,\mu \in \BP$.

 For $r \in I_0$, define the $r$-th {\it fundamental weight} $\varpi_r := \begin{cases}
\sum_{i=1}^r \epsilon_i & (r \leq M),  \\
-\sum_{i=r+1}^{M+N} \epsilon_i & (r > M).
\end{cases}$
When $1 \leq r \leq M$, let $\CB_r$ be the set of functions $f: \{1,2,\cdots, r \} \longrightarrow I$ such that: $f(i) \leq f(i')$ for $i \leq i'$; if $1 \leq i < r$ and $f(i) \leq M$, then $f(i) < f(i+1)$. ($\CB_r$ is the set of semi-standard Young tableaux of the Young diagram with $r$ rows and one column; see \cite[\S 4.1]{BKK}.)
\begin{theorem} \label{thm: BKK Schur-Weyl duality}   \cite{BKK}
For $1 \leq r \leq M$  we have
\begin{equation}   \label{equ: character of simple modules}
\chi (L(\varpi_r)) = \sum_{f \in \CB_r} [\sum_{i=1}^r \epsilon_{f(i)}] \in \BZ[\BP].
\end{equation} 
\end{theorem}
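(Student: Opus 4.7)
The plan is to realize $L(\varpi_r)$ for $1 \leq r \leq M$ as an iterated quantum super-exterior power of $\BV$ sitting inside $\BV^{\otimes r}$, exhibit a weight basis indexed by $\CB_r$, and read off the character directly.

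First, I would iterate the construction of $\BV^{-}$ from \eqref{def: quantum anti-symmetric space}: for $i < j$ set $v_i \wedge_q v_j := q^{-1} v_i \otimes v_j - (-1)^{|i||j|} v_j \otimes v_i$, and allow $v_k \wedge_q v_k := v_k \otimes v_k$ only when $|k| = \odd$. Let $\Lambda_q^r \BV \subset \BV^{\otimes r}$ be the image of an iterated quantum super-antisymmetrizer, equivalently the subspace cut out by imposing the $\BV^{-}$-type relations between every pair of adjacent tensor slots. Using these relations, any wedge rewrites as a linear combination of ordered wedges
\begin{align*}
v_f := v_{f(1)} \wedge_q \cdots \wedge_q v_{f(r)}, \qquad f \in \CB_r.
\end{align*}
Since $v_f \in (\BV^{\otimes r})_{\sum_i \epsilon_{f(i)}}$ and distinct non-decreasing sequences produce distinct multisets of values, hence distinct $\BP$-weights, the $v_f$ automatically lie in different weight spaces and therefore form a basis of $\Lambda_q^r \BV$.

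Next, I would verify that $\Lambda_q^r \BV$ is $U_q(\Glie)$-stable. For $r = 2$ this is exactly Example \ref{example: tensor square of natural representations}, and the general case follows because the iterated coproduct reduces every check to the adjacent-pair case already treated. The special vector $w_r := v_1 \wedge_q \cdots \wedge_q v_r$ satisfies $e_i^+ w_r = 0$ for each $i \in I_0$: for $i \geq r$ no tensor slot carries a $v_{i+1}$, while for $i < r$ the unique non-vanishing term from the coproduct produces a wedge with two copies of the even vector $v_i$, which is zero. The Cartan eigenvalues of $w_r$ read off to $\varpi_r$, so $\Lambda_q^r \BV$ is a highest-weight module of highest weight $\varpi_r$, hence a quotient of $L(\varpi_r)$.

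Finally, to identify $\Lambda_q^r \BV$ with $L(\varpi_r)$ it suffices to show that $w_r$ generates $\Lambda_q^r \BV$. I would order $\CB_r$ by $f \prec f'$ when $f'$ is obtained from $f$ by incrementing one value within the constraints of $\CB_r$; then for each $f$ above the minimum one exhibits an index $i \in I_0$ such that $e_i^-$ applied to a suitable predecessor $v_{f''}$ produces $v_f$ plus strictly $\prec$-smaller terms, with a non-zero leading coefficient computed from the coproduct formula. The main obstacle is this last inductive step: the super-signs at the boundary $f(j) = M$ where the even/odd regime changes are delicate, and confirming non-vanishing of every leading coefficient requires the careful bookkeeping of \cite{BKK}. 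If the direct argument becomes unwieldy, one appeals to the Benkart-Kang-Kashiwara crystal-basis theorem, which identifies the crystal of $L(\varpi_r)$ with single-column semi-standard super-tableaux, i.e.\ with $\CB_r$, and yields \eqref{equ: character of simple modules} at once.
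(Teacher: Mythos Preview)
The paper does not prove this statement at all: it is stated with a citation to \cite{BKK} and closed immediately with a $\Box$. So there is no ``paper's own proof'' to compare against; your fallback in the last sentence --- appeal directly to \cite{BKK} --- is exactly what the paper does.

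That said, your sketched direct argument has a genuine gap. The sentence ``$\Lambda_q^r \BV$ is a highest-weight module of highest weight $\varpi_r$, hence a quotient of $L(\varpi_r)$'' has the arrow pointing the wrong way: a highest-weight module of highest weight $\varpi_r$ admits $L(\varpi_r)$ as its unique simple \emph{quotient}, not the other way around. Consequently, even after you prove that $w_r$ generates $\Lambda_q^r\BV$, what you obtain is a surjection $\Lambda_q^r\BV \twoheadrightarrow L(\varpi_r)$, and the character identity \eqref{equ: character of simple modules} would only follow once you know this surjection is an isomorphism, i.e.\ once you know $\Lambda_q^r\BV$ is simple. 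Your inductive step (raising indices with $e_i^-$) establishes generation from the top but says nothing about simplicity; in the super case highest-weight modules are not automatically semisimple, so this is not a formality. One way to close the gap without invoking \cite{BKK} wholesale is to run the symmetric induction with the $e_i^+$ to show that every $v_f$ generates $w_r$ as well (so every nonzero submodule contains $w_r$), using that all weight spaces are one-dimensional; but this is precisely the ``careful bookkeeping'' you defer, and it is where the actual content of the theorem lies.
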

\section{Quantum affine superalgebras and their representations} \label{sec: definition}
In this section, we present in a uniform way the RTT realization of the quantum affine superalgebra $U_q(\Gaff)$ following \cite{Ding-Frenkel,FM,Molev,Tsuboi,Zyz}. Then we study highest $\ell$-weight representations.
\subsection{The quantum affine superalgebra $U_q(\Gaff)$.} \label{sec: quantum affine superalgebra}
Let $P^+,P^-$ be the projections with respect to the decomposition $\BV^{\otimes 2} = \BV^+ \oplus \BV^-$ in Example \ref{example: natural representation quantum superalgebra}. Let $c_{\BV,\BV} \in \End \BV^{\otimes 2}$ be the signed permutation $v_i \otimes v_j \mapsto (-1)^{|i||j|} v_j \otimes v_i$. Define the {\it Perk-Schultz matrix} \cite{Perk-Schultz} 
\begin{equation}    \label{equ: Perk-Schultz matrix}
R(z,w) = c_{\BV,\BV}((z q - w q^{-1}) P^+ + (w q - z q^{-1}) P^-) \in (\End \BV^{\otimes 2})[z,w]
\end{equation}
\begin{defi}      \label{def: quantum affine superalgebras}
The quantum affine superalgebra $U_q(\Gaff)$ is the superalgebra defined by 
\begin{itemize}
\item[(R1)] RTT-generators $s_{ij}^{(n)}, t_{ij}^{(n)}$ for $i,j \in I$ and $n \in \BZ_{\geq 0}$;
\item[(R2)] $\super$-grading $|s_{ij}^{(n)}| = |t_{ij}^{(n)}| = |i| + |j|$; 
\item[(R3)] RTT-relations \cite{FRT2} in $U_q(\Gaff) \otimes (\End \BV^{\otimes 2})[[z,z^{-1},w,w^{-1}]]$
\begin{eqnarray}
&& R_{23}(z,w) T_{12}(z) T_{13}(w) = T_{13}(w) T_{12}(z) R_{23}(z,w), \label{rel: RTT = TTR}  \\
&& R_{23}(z,w) S_{12}(z) S_{13}(w) = S_{13}(w) S_{12}(z) R_{23}(z,w),  \label{rel: RSS = SSR}  \\
&& R_{23}(z,w) T_{12}(z) S_{13}(w) = S_{13}(w) T_{12}(z) R_{23}(z,w),  \label{rel: RTS = STR}    \\
&& t_{ij}^{(0)} = s_{ji}^{(0)} = 0 \quad \mathrm{for}\ 1 \leq i < j \leq M+N,  \label{rel: FRTS zero condition}  \\
&& t_{ii}^{(0)} s_{ii}^{(0)} = 1 = s_{ii}^{(0)} t_{ii}^{(0)} \quad \mathrm{for}\ i \in I.   \label{rel: FRTS invertibility condition}
\end{eqnarray}
\end{itemize}
Here $T(z) = \sum_{i,j \in I} t_{ij}(z) \otimes E_{ij} \in (U_q(\Gaff) \otimes \End \BV)[[z^{-1}]]$ and $t_{ij}(z) = \sum_{n \in \BZ_{\geq 0}} t_{ij}^{(n)} z^{-n} \in U_q(\Gaff)[[z^{-1}]]$ (similar convention for $S(z)$ with the $z^{-n}$ replaced by the $z^{n}$). The $q$-Yangian $Y_q(\Glie)$ is the subalgebra of $U_q(\Gaff)$ generated by the $(s_{ii}^{(0)})^{-1}, s_{ij}^{(n)}$ for $i,j \in I$ and $n \in \BZ_{\geq 0}$.
\end{defi}  
We use the following convention. Let $A,B,C$ be three superalgebras and let $T = \sum_i a_i \otimes b_i \in A \otimes B$. Then the $T_{kl}$ with $1 \leq k < l \leq 3$ are defined by $T_{12} := \sum_i a_i \otimes b_i \otimes 1 \in A \otimes B \otimes C, T_{13} := \sum_{i} a_i \otimes 1 \otimes b_i \in A \otimes C \otimes B$ and $T_{23} := \sum_i 1 \otimes a_i \otimes b_i \in C \otimes A \otimes B$. It is also natural to define the $T_{kl}$ with $1 \leq k < l \leq n$ by introducing $n-3$ more auxiliary superalgebras. (We shall not use the $T_{kl}$ with $k > l$.) 

 $U_q(\Gaff)$ is endowed with a super bialgebra structure (set $\epsilon_{ijk} := (-1)^{(|i|+|k|)(|k|+|j|)}$):
\begin{eqnarray}   
&&\Delta (s_{ij}^{(n)}) = \sum_{a=0}^n \sum_{k \in I} \epsilon_{ijk}  s_{ik}^{(a)} \otimes s_{kj}^{(n-a)}, \quad  \Delta (t_{ij}^{(n)}) = \sum_{a=0}^n \sum_{k \in I} \epsilon_{ijk} t_{ik}^{(a)} \otimes t_{kj}^{(n-a)}. \label{for: coproduct for quantum affine superalgebra S}   
\end{eqnarray}
The antipode $\Sm: U_q(\Gaff) \longrightarrow U_q(\Gaff)$ is given by the following equations in $(U_q(\Gaff)\otimes \End \BV)[[z^{\pm 1}]]$
\begin{eqnarray}
&&(\Sm \otimes \Id)(S(z)) = S(z)^{-1}, \quad (\Sm \otimes \Id)(T(z)) = T(z)^{-1}.    \label{for: antipode for S}  
\end{eqnarray}
Here the RHS of the above formulas are well defined thanks to Relation \eqref{rel: FRTS zero condition}. It follows that $U_q(\Gaff)$ is a Hopf superalgebra, with $Y_q(\Glie)$ a sub-Hopf-superalgebra. 

In the following, we review several fundamental symmetry properties of $U_q(\Gaff)$.

There exists a $\BZ$-grading on $U_q(\Gaff)$: $|s_{ij}^{(n)}|_{\BZ} = n,\ |t_{ij}^{(n)}|_{\BZ} = -n$ afforded by the one-parameter family $(\Phi_a: a \in \BC^{\times})$ of Hopf superalgebra automorphisms:
\begin{equation}    \label{for: automorphisms of Z-graded superalgebras}
\Phi_a: U_q(\Gaff) \longrightarrow U_q(\Gaff),\quad s_{ij}^{(n)} \mapsto a^n s_{ij}^{(n)},\ t_{ij}^{(n)} \mapsto a^{-n} t_{ij}^{(n)}.
\end{equation}  

The $\BQ$-grading comes from the conjugate action of the $s_{ii}^{(0)}$: for $\alpha \in \BQ$
\begin{displaymath}
(U_q(\Gaff))_{\alpha} = \{ x \in U_q(\Gaff) \ |\ s_{ii}^{(0)} x (s_{ii}^{(0)})^{-1} = q^{(\epsilon_i,\alpha)} x \quad \textrm{for}\ i \in I   \}.
\end{displaymath}
The $\BQ$-grading is compatible with the Hopf superalgebra structure and with the $\super$-grading:
\begin{equation}     \label{equ: weight grading}
|s_{ij}^{(n)}|_{\BQ} = |t_{ij}^{(n)}|_{\BQ} = \epsilon_i - \epsilon_j \quad (i,j \in I).
\end{equation}
\subsubsection{The Perk-Schultz $R$-matrix.}  \label{sec: Perk-Schultz matrix}
 The exact form of $R(z,w) \in (\End \BV \otimes \End \BV)[z,w]$ is:
\begin{eqnarray}  \label{for: Perk-Schultz matrix coefficients}
\begin{array}{rcl}
R(z,w) &=&  \sum\limits_{i\in I}(zq_i - wq_i^{-1}) E_{ii} \otimes E_{ii}  + (z-w) \sum\limits_{i \neq j} E_{ii} \otimes E_{jj} \\
&\ & + z \sum\limits_{i<j} (q_i-q_i^{-1}) E_{ji} \otimes E_{ij} + w \sum\limits_{i<j}(q_j-q_j^{-1})  E_{ij} \otimes E_{ji}.
\end{array}  
\end{eqnarray}
Notably it satisfies the following {\it quantum Yang-Baxter equation} as observed in \cite{Perk-Schultz}
\begin{displaymath}
R_{12}(z_1,z_2)R_{13}(z_1,z_3)R_{23}(z_2,z_3) = R_{23}(z_2,z_3)R_{13}(z_1,z_3)R_{12}(z_1,z_2)
\end{displaymath} 
Let $R = R(1,0),R' = c_{\BV,\BV} R^{-1} c_{\BV,\BV}$. Then $R(z,w) = z R - w R'$ and $R' = R - (q - q^{-1}) c_{\BV,\BV}$.

Let us define the quantum affine superalgebra $U_q(\Gafft) = U_q(\widehat{\mathfrak{gl}(N,M)})$ in exactly the same way as $U_q(\Gaff)$, except that we interchange $M,N$ everywhere. Let $s_{ij}'^{(n)},t_{ij}'^{(n)}$ for $i,j \in I$ and $n \in \BZ_{\geq 0}$ be the corresponding RTT generators of $U_q(\Gafft)$, so that their $\super$-degrees are $|s_{ij}'^{(n)}| = |t_{ij}'^{(n)}| = |i|' + |j|'$ where $|i|' = \even$ for $1 \leq i \leq N$ and $\odd$ otherwise. For $i,j \in I$, set 
\begin{displaymath}
\varepsilon_{ij} := (-1)^{|i|(|i|+|j|)},\quad \varepsilon_{ij}' := (-1)^{|i|'(|i|'+|j|')}, \quad \overline{i} := M+N+1-i.
\end{displaymath}
 \begin{prop}   \label{prop: from gl(N,M) to gl(M,N)}
There are isomorphisms of Hopf superalgebras $f: U_q(\Gafft)^{\mathrm{cop}} \longrightarrow U_q(\Gaff)$ and $\Psi: U_q(\Gaff) \longrightarrow U_q(\Gaff)^{\mathrm{cop}}$ defined by
\begin{align*}
& f(s_{ij}'^{(n)}) = \varepsilon_{ji}' s_{\overline{j}\overline{i}}^{(n)},\quad  f(t_{ij}'^{(n)}) = \varepsilon_{ji}' t_{\overline{j}\overline{i}}^{(n)}, \\
& \Psi(s_{ij}^{(n)}) = \varepsilon_{ji}t_{ji}^{(n)},\quad \Psi(t_{ij}^{(n)}) = \varepsilon_{ji}s_{ji}^{(n)}.
\end{align*}
\end{prop} 
Here, for $(A,\Delta,\varepsilon)$ a Hopf superalgebra, $(A^{\mathrm{cop}},\Delta^{\mathrm{cop}},\varepsilon)$ denotes another Hopf superalgebra with the same underlying superalgebra $A$ but with twisted coproduct $\Delta^{\mathrm{cop}} := c_{A,A} \Delta$.  
\subsubsection{Evaluation morphisms.}  \label{sec: evaluation maps}
 As in Definition \ref{def: quantum affine superalgebras}, let us define $U_q(\Glie)$ to be the superalgebra generated by $s_{ij},t_{ji}$ for $1 \leq i \leq j \leq M+N$, with $\super$-degrees 
\begin{displaymath}
|s_{ij}| = |t_{ji}| = |i|+|j|
\end{displaymath}
and with RTT relations (\cite{FRT}) in $U_q(\Glie) \otimes \End \BV^{\otimes 2}$
\begin{eqnarray*}
&& R_{23}T_{12}T_{13} = T_{13}T_{12}R_{23},\quad R_{23}S_{12}S_{13} = S_{13}S_{12}R_{23}  \\
&& R_{23}T_{12}S_{13} = S_{13}T_{12}R_{23},\quad  s_{ii} t_{ii} = 1 = t_{ii}s_{ii}.
\end{eqnarray*}
Here, as usual, $T = \sum_{i\leq j} t_{ji} \otimes E_{ji},\ S = \sum_{i\leq j}s_{ij} \otimes E_{ij} \in U_q(\Glie) \otimes \End \BV$.  $U_q(\Glie)$ is endowed with a Hopf superalgebra structure with similar coproduct as in Equation \eqref{for: coproduct for quantum affine superalgebra S}.
\begin{prop}   \label{prop: evaluation morphism}
(1) The assignment $s_{ij} \mapsto s_{ij}^{(0)},\ t_{ji} \mapsto t_{ji}^{(0)}$ extends uniquely to a Hopf superalgebra morphism $\iota: U_q(\Glie) \longrightarrow U_q(\Gaff)$.

(2) The assignment $s_{ij}(z) \mapsto s_{ij} - z t_{ij},\ t_{ij}(z) \mapsto t_{ij} - z^{-1}s_{ij}$ extends uniquely to a superalgebra morphism $\ev: U_q(\Gaff) \longrightarrow U_q(\Glie)$.

(3) There is an isomorphism of Hopf superalgebras $DF: \Uc_q(\Glie) \longrightarrow U_q(\Glie)$
\begin{displaymath}
e_i^+ \mapsto \frac{s_{ii}^{-1}s_{i,i+1}}{1 - q_i^{-2}},\quad e_i^- \mapsto \frac{t_{i+1,i}t_{ii}^{-1}}{1-q_i^2},\quad t_j^{d_j} \mapsto s_{jj} = t_{jj}^{-1} \quad (i \in I_0,\ j \in I).
\end{displaymath}   
\end{prop}
Here $s_{ji} = t_{ij} = 0$ in $U_q(\Glie)$ for $i < j$. The morphism $\ev$ is called an {\it evaluation morphism}. It is clear that $\ev \circ \iota = \Id_{U_q(\Glie)}$, from which we obtain a $\BQ$-grading on $U_q(\Glie)$. 
\subsubsection{Quantum double construction.}\label{sec: quantum double}
 We reformulate the RTT definition of $U_q(\Gaff)$ as a quantum double construction, as in the non-graded case \cite[Theorem 16]{FRT}. This will in turn give an RTT presentation of the $q$-Yangian $Y_q(\Glie)$ in Definition \ref{def: quantum affine superalgebras}.

Let $A,B$ be two Hopf superalgebras. Call a bilinear form $\varphi: A \times B \longrightarrow \BC$ a {\it Hopf pairing} if $\varphi$ is of $\super$-degree $\even$, and if $\varphi$ satisfies 
\begin{eqnarray*}
&&\varphi(a,bb') = (-1)^{|b||b'|} \varphi(a_{(1)},b) \varphi(a_{(2)},b'),\quad \varphi(a,1) = \varepsilon_A(a);   \\
&&\varphi(aa',b) = \varphi(a',b_{(1)}) \varphi(a,b_{(2)}),\quad \varphi(1,b) = \varepsilon_B(b)
\end{eqnarray*}
for $\super$-homogeneous $a,a' \in A$ and $b,b' \in B$. One can endow the vector superspace $A \otimes B$ with a unique Hopf superalgebra structure satisfying \cite[Theorem 3.2]{Rosso}
\begin{itemize}
\item[(QD1)] $a \mapsto a \otimes 1,\ b \mapsto 1 \otimes b$ are morphisms of Hopf superalgebras respectively;
\item[(QD2)] for $\super$-homogeneous $a \in A, b \in B$, we have $(a \otimes 1)(1 \otimes b) = a \otimes b$ and
\begin{equation*}
(1 \otimes b) (a \otimes 1) = (-1)^{|a_{(1)}||b| + (|b_{(2)}|+|b_{(3)}|)|a_{(2)}| + |a_{(3)}||b_{(3)}|} \varphi (a_{(1)}, S_B (b_{(1)})) \varphi(a_{(3)}, b_{(3)}) a_{(2)} \otimes b_{(2)}.
\end{equation*}
\end{itemize}
Let $\Dc_{\varphi}(A,B)$ be the Hopf superalgebra thus obtained.

In our context, $A$ (resp. $B$) is the superalgebra generated by the $s_{ij}^{(n)},(s_{ii}^{(0)})^{-1}$ (resp. the $t_{ij}^{(n)},(t_{ii}^{(0)})^{-1}$) with $\super$-gradings and with defining relations as in Definition \ref{def: quantum affine superalgebras} (without Relation \eqref{rel: RTS = STR} which makes no sense). Clearly $A$ and $B$ are Hopf superalgebras with coproducts defined by Equation \eqref{for: coproduct for quantum affine superalgebra S}.
\begin{prop}   \label{prop: quantum double construction}
There exists uniquely a Hopf pairing $\varphi: A \times B \longrightarrow \BC$ such that
\begin{equation}  \label{equ: Hopf pairing}
\sum_{i,j,a,b \in I} E_{ab} \otimes E_{ij} \sum_{m,n \in \BZ_{\geq 0}} z^{-m}w^n \varphi(s_{ij}^{(n)},t_{ab}^{(m)}) = \frac{R(z,w)}{zq-wq^{-1}} \in (\End \BV)^{\otimes 2}[[z^{-1},w]].
\end{equation}
The assignment $s_{ij}^{(n)} \otimes 1 \mapsto s_{ij}^{(n)},\ 1 \otimes t_{ij}^{(n)} \mapsto t_{ij}^{(n)}$ extends uniquely to a surjective morphism of Hopf superalgebras $D: \Dc_{\varphi}(A,B) \longrightarrow U_q(\Gaff)$ whose kernel is the ideal generated by the
\begin{displaymath}
s_{ii}^{(0)} \otimes 1 - 1 \otimes (t_{ii}^{(0)})^{-1},\quad 1 \otimes t_{ii}^{(0)} - (s_{ii}^{(0)})^{-1} \otimes 1 \quad (i \in I).
\end{displaymath} 
Moreover, $D$ restricts to a Hopf superalgebra isomorphism $D|_A: A \longrightarrow Y_q(\Glie)$. 
\end{prop}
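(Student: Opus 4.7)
The plan is to build the pairing $\widehat{\varphi}$ explicitly on generators, verify its consistency via the Yang--Baxter equation (PS1), and then identify $U_q(\Gaff)$ with the quantum double modulo the ``gluing'' relations encoded by \eqref{rel: FRTS invertibility condition}.

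First, for existence and uniqueness of $\widehat{\varphi}$: the values on pairs of generators are prescribed by \eqref{equ: Hopf pairing}, and uniqueness is then automatic from the Hopf pairing axioms, which determine the value on a product once the pairing is known on generators. The nontrivial step is that multiplicative extension is compatible with the defining RTT relations of $A$ and $B$. Writing the pairing in matrix-generating-series form $\widehat{\varphi}(S(w), T(z)) = \Rc(z,w)$ (applied entrywise, with the indexing of \eqref{equ: Hopf pairing}), the axiom $\widehat{\varphi}(ss', t) = \widehat{\varphi}(s, t_{(1)}) \widehat{\varphi}(s', t_{(2)})$ combined with the coproduct \eqref{for: coproduct for quantum affine superalgebra T} translates the RTT relation \eqref{rel: RSS = SSR} into a condition on $\Rc$, and that condition is precisely the Yang--Baxter equation (PS1) of Proposition \ref{prop: properties of Perk-Schultz matrix}. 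The symmetric argument on the other slot handles \eqref{rel: RTT = TTR} for $T$. Compatibility with the group-like inverses of $s_{ii}^{(0)}, t_{ii}^{(0)}$ inside $A, B$ is automatic from the fact that the ``boundary'' matrices $\Rc(z,0)$ and $\Rc(\infty, w)$ are invertible and diagonal.

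Second, to construct $D$: the inclusions $A \hookrightarrow U_q(\Gaff)$ and $B \hookrightarrow U_q(\Gaff)$ on generators are immediate from Definition \ref{def: quantum affine superalgebras}. They assemble into a linear map $A \otimes B \to U_q(\Gaff)$ respecting coproducts. The only nontrivial point is multiplicativity for the quantum double product, i.e.\ axiom (QD2). Rewriting (QD2) in matrix-generating-series form, what must be checked is exactly
\[
T(z) S(w) \;=\; \Rc(z,w)^{-1} S(w) T(z) \Rc(z,w),
\]
which is a rearrangement of the mixed RTT relation \eqref{rel: RTS = STR}. Thus $D$ is a well-defined Hopf superalgebra morphism, surjective because its image contains every generator of $U_q(\Gaff)$.

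Third, to determine $\ker D$: the listed elements lie in $\ker D$ by \eqref{rel: FRTS invertibility condition}. Let $J$ denote the two-sided ideal they generate; in the quotient $\Dc_{\widehat{\varphi}}(A,B)/J$ the generators $s_{ij}^{(n)}, t_{ij}^{(n)}$ satisfy all of \eqref{rel: RTT = TTR}--\eqref{rel: FRTS invertibility condition}, so $U_q(\Gaff)$ surjects onto the quotient in the opposite direction. The two surjections are mutually inverse provided the multiplication $A \otimes B \to U_q(\Gaff)$ descends to a bijection modulo the identifications $s_{ii}^{(0)} \leftrightarrow (t_{ii}^{(0)})^{-1}$. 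This triangular-decomposition statement is the main obstacle, as it is a PBW-type assertion for $U_q(\Gaff)$; the cleanest path is via the Gauss decomposition of $S(z), T(z)$ underlying Proposition \ref{prop: FRT realization of quantum superalgebra}, which yields independent Drinfeld-type generating sets for the images of $A$ and $B$ and establishes their ``disjointness'' inside $U_q(\Gaff)$. Granted this, the last claim, $D|_A : A \xrightarrow{\sim} Y_q(\Glie)$, is surjective by the very definition of $Y_q(\Glie)$ and injective because $J \cap (A \otimes 1) = 0$ in $\Dc_{\widehat{\varphi}}(A,B)$, which is the $B = \BC$ shadow of the same decomposition.
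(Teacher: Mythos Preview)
Your proposal is correct and follows essentially the same route as the paper's own (sketch) proof: both define the pairing on generators via \eqref{equ: Hopf pairing}, reduce compatibility with the RTT relations \eqref{rel: RTT = TTR}--\eqref{rel: RSS = SSR} to the Yang--Baxter equation (PS1), and identify the quantum-double relation (QD2) with the mixed relation \eqref{rel: RTS = STR}. Two minor imprecisions: the boundary matrix $\Rc(z,0)$ is upper-triangular rather than diagonal (what you actually need is that the $s_{ii}^{(0)}$ are group-like and pair nonzero against the $t_{jj}^{(0)}$), and calling the maps $A, B \to U_q(\Gaff)$ ``inclusions'' at the outset is premature---their injectivity is precisely what the PBW/Gauss argument you invoke at the end is meant to supply.
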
 
\noindent
\textit{Sketch of Proof.} The proof is almost identical to that in non-graded case \cite{FRT}. For completeness we indicate the main idea of proof. By abuse of language, let $\mathcal{F}_A$ (resp. $\mathcal{F}_B$) be the superalgebra freely generated by the $s_{ij}^{(n)}$ (resp. the $t_{ij}^{(n)}$) for $i,j \in I, n \in \BZ_{\geq 0}$, and with $\super$-gradings $|s_{ij}^{(n)}| = |t_{ij}^{(n)}| = |i|+|j|$. Then $\mathcal{F}_A$ and $\mathcal{F}_B$ are super bialgebras with coproduct given by Equation \eqref{for: coproduct for quantum affine superalgebra S}. Now Formula \eqref{equ: Hopf pairing} above determines a bilinear form $\varphi: \mathcal{F}_A \times \mathcal{F}_B \longrightarrow \BC$ satisfying all the properties of a Hopf pairing. According to \cite[Chapter 3]{Rosso} it is enough to show that $\varphi$ respects Relations \eqref{rel: RTT = TTR}-\eqref{rel: RSS = SSR}, \eqref{rel: FRTS zero condition}-\eqref{rel: FRTS invertibility condition}, and that (QD2) is equivalent to Relation \eqref{rel: RTS = STR}. We only check Relation \eqref{rel: RSS = SSR}. (The other relations can be done in the same way.) For this, define the bilinear map
\begin{align*}
& \begin{cases}
\varphi_3: \mathcal{F}_A^{\otimes 2} \otimes (\End \BV)^{\otimes 2} \times \mathcal{F}_B^{\otimes 2}  \otimes \End \BV \longrightarrow (\End \BV)^{\otimes 3}  \\
(a \otimes a' \otimes x \otimes y, b \otimes b' \otimes z) \mapsto (-1)^{(|x|+|y|)(|b|+|b'|+|z|) + |a'||b|} \varphi(a,b)\varphi(a',b')z \otimes x \otimes y.
\end{cases}  
\end{align*}
for $\super$-homogeneous vectors $a,a',x,y,z,b,b'$. Then Relation \eqref{rel: RSS = SSR} amounts to:
\begin{align*}
& \varphi_3(R_{34}(z,w)S_{13}(z)S_{24}(w)- S_{14}(w)S_{23}(z)R_{34}(z,w), T_{23}(u)T_{13}(u)) = 0.
\end{align*}
From the definitions of $\varphi$ and $\varphi_3$, we see that the LHS of the above equation becomes:
\begin{displaymath}
(uq-wq^{-1})^{-1}(uq-zq^{-1})^{-1} \times (R_{23}(z,w) R_{13}(u,w) R_{12}(u,z) - R_{12}(u,z) R_{13}(u,w) R_{23}(z,w)),
\end{displaymath}
which is zero because of the Yang-Baxter equation in \S \ref{sec: Perk-Schultz matrix}.  \hfill $\Box$

It is interesting to find the Casimir element (if it exists) with respect to this Hopf pairing; this would be the universal $R$-matrix \cite{Damiani} for $U_q(\Gaff)$. 
\subsubsection{Ding-Frenkel homomorphism.} \label{sec: Drinfeld}
The Gauss decomposition gives uniquely 
\begin{displaymath}
e_{ij}^{\pm}(z),\ f_{ji}^{\pm}(z),\ K_l^{\pm}(z) \in U_q(\Gaff)[[z^{\pm 1}]] \quad \textrm{for}\ 1 \leq i < j \leq M+N,\ 1 \leq l \leq M+N
\end{displaymath}
such that in the superalgebra $(U_q(\Gaff) \otimes \End \BV)[[z,z^{-1}]]$
\begin{equation*} 
\begin{cases}
S(z) = (\sum\limits_{i<j} f_{ji}^+(z) \otimes E_{ji} + 1 \otimes \Id_{\BV}) (\sum\limits_l K_l^+(z) \otimes E_{ll} )(\sum\limits_{i<j} e_{ij}^+(z) \otimes E_{ij} + 1 \otimes \Id_{\BV}), \\
T(z) = (\sum\limits_{i<j} f_{ji}^-(z) \otimes E_{ji} + 1 \otimes \Id_{\BV}) (\sum\limits_l K_l^-(z) \otimes E_{ll} )(\sum\limits_{i<j} e_{ij}^-(z) \otimes E_{ij} + 1 \otimes \Id_{\BV}).
\end{cases}  
\end{equation*}
For example, $K_1^+(z) = s_{11}(z)$ and $K_1^-(z) = t_{11}(z)$. For $i \in I_0 = I \setminus \{M+N\}$, define
\begin{displaymath}
X_i^+(z) = e_{i,i+1}^+(z) - e_{i,i+1}^-(z) = \sum_n X_{i,n}^+ z^n,\quad X_i^-(z) = f_{i+1,i}^-(z) - f_{i+1,i}^+(z) = \sum_{n} X_{i,n}^- z^n.
\end{displaymath}
\begin{thm} \label{thm: Ding-Frenkel}   \cite{Ding-Frenkel,Zyz}
 As a superalgebra $U_q(\Gaff)$ is generated by the coefficients of $X_i^{\pm}(z),K_j^{\pm}(z)$ with $i \in I_0,j \in I$. Moreover,
\begin{align*}
& K_i^{\epsilon}(z) K_j^{\epsilon'}(w) = K_j^{\epsilon'}(w) K_i^{\epsilon}(z),  \\
& (\textrm{Cartan}) \begin{cases}
 K_i^{\epsilon}(z) X_j^{\epsilon'}(w) = X_j^{\epsilon'}(w) K_i^{\epsilon}(z) \quad \textrm{for}\ i \notin \{j,j+1\},   \\
 K_i^{\epsilon}(z) X_i^{\pm}(w) = (\frac{q_i z - q_i^{-1}w}{z-w})^{\mp 1} X_i^{\pm}(w) K_i^{\epsilon}(z), \\
 K_{i+1}^{\epsilon}(z) X_i^{\pm}(w) = (\frac{q_{i+1}^{-1} z - q_{i+1} w}{z-w})^{\mp 1} X_i^{\pm}(w) K_{i+1}^{\epsilon}(z), 
\end{cases}    \\
& (\textrm{Drinfeld}) \begin{cases}
 X_i^{\epsilon}(z) X_j^{\epsilon}(w) - X_j^{\epsilon}(w) X_i^{\epsilon}(z) = 0 \quad \textrm{if}\ |i-j| \geq 2,     \\
 X_M^{\epsilon}(z) X_M^{\epsilon}(w) + X_M^{\epsilon}(w) X_M^{\epsilon}(z) = 0,      \\
 (q_i^{\mp 1} z - q_i^{\pm 1} w ) X_i^{\pm}(z) X_i^{\pm}(w) = (q_i^{\pm 1} z - q_i^{\mp 1} w) X_i^{\pm}(w) X_i^{\pm}(z) \quad \textrm{if}\ i \neq M,   \\
 (q_{i+1}z - q_{i+1}^{-1} w) X_i^+(z) X_{i+1}^+(w) = (z-w) X_{i+1}^+(w) X_i^+(z),     \\ 
 (z-w) X_{i}^-(z) X_{i+1}^-(w) = (q_{i+1}z - q_{i+1}^{-1} w) X_{i+1}^-(w)X_i^-(z),
\end{cases}    \\
&[X_i^+(z), X_j^-(w)] = \delta_{ij}(q_i - q_i^{-1}) \delta(\frac{z}{w}) (K_{i+1}^+(z)K_i^+(z)^{-1} - K_{i+1}^-(w)K_i^-(w)^{-1}),   \\
& (\textrm{Serre}) \begin{cases}
[X_i^{\epsilon}(z_1),[X_i^{\epsilon}(z_2), X_j^{\epsilon}(w)]_q]_{q^{-1}} + \{z_1 \leftrightarrow z_2  \} = 0 \quad \textrm{if}\ (i \neq M,\ |j-i|=1), \\
[[[X_{M-1}^{\epsilon}(u),X_M^{\epsilon}(z_1)]_q, X_{M+1}^{\epsilon}(v)]_{q^{-1}}, X_M^{\epsilon}(z_2)] + \{z_1 \leftrightarrow z_2 \} = 0 \quad \textrm{if}\ (M,N > 1).
\end{cases}
\end{align*}
\end{thm}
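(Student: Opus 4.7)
The plan is to follow the super analogue of Ding-Frenkel's argument, treating $S(z)$ and $T(z)$ as block matrices in $U_q(\Gaff) \otimes \End \BV$ and reading off Drinfeld-type currents from their Gauss decomposition. The first step is to check that the Gauss decomposition written before the theorem actually exists and is unique. Since Relation \eqref{rel: FRTS invertibility condition} makes $s_{ii}^{(0)}$ and $t_{ii}^{(0)}$ invertible, the diagonal series $K_1^+(z) = s_{11}(z)$ and $K_1^-(z) = t_{11}(z)$ are invertible in the appropriate formal-power-series rings, and iterating a block Schur-complement procedure on $S(z)$ and $T(z)$ produces the currents $K_l^{\pm}(z), e_{ij}^{\pm}(z), f_{ji}^{\pm}(z)$ uniquely. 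Conversely, the matrix entries $s_{ij}(z), t_{ij}(z)$ can be recovered from these Gauss entries, so to see that the coefficients of the $X_i^{\pm}(z)$ and $K_j^{\pm}(z)$ generate $U_q(\Gaff)$ it is enough to express each non-neighbouring Gauss entry $e_{ij}^{\pm}, f_{ji}^{\pm}$ with $j-i > 1$ as an iterated $q$-bracket of the simple currents $X_k^{\pm}$. This is the standard Levendorskii-Soibelman step and falls out of the quadratic relations once they are established.

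The bulk of the proof is extracting the Cartan, mixed-commutator and Drinfeld quadratic relations from the RTT equations \eqref{rel: RTT = TTR}, \eqref{rel: RSS = SSR}, \eqref{rel: RTS = STR} by comparing matrix coefficients $E_{ab} \otimes E_{cd}$ in the $(\End \BV)^{\otimes 2}$ factor. I would start with \eqref{rel: RSS = SSR}, substitute the Gauss form of $S(z)$, and sandwich both sides by the inverses of the unipotent factors. The diagonal-diagonal coefficients then yield $[K_i^+(z), K_j^+(w)] = 0$; the diagonal-off-diagonal coefficients give the $+/+$ half of the Cartan relations with $X_j^+$; and the purely off-diagonal coefficients on strictly ordered index tuples produce the $+/+$ Drinfeld quadratic relations. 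The key technical input is the explicit formula \eqref{for: Perk-Schultz matrix coefficients} for $R(z,w)$, which controls the precise rational prefactors $\frac{q_i z - q_i^{-1}w}{z-w}$. The analogous analysis of the $T$-version of \eqref{rel: RSS = SSR} yields the $-/-$ sector, while the cross-relation \eqref{rel: RTS = STR} between $S(z)$ and $T(z)$ supplies both the $K^+/K^-$-versus-$X^{\pm}$ Cartan relations and the mixed commutator $[X_i^+(z), X_j^-(w)]$. The formal $\delta$-function appearing in the latter is produced by the residue of $(z-w)^{-1}$ inside $\Rc(z,w)^{-1}$, which is exactly the RTT incarnation of the fact that $X_i^{\pm}(z)$ is by definition a difference of two Gauss currents.

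The hard part will be the Serre relations. For $i \neq M$ one should argue as in \cite{Ding-Frenkel}: starting from the quadratic Drinfeld relation for $X_i^{\pm}$ and the mixed quadratic relation with $X_{i \pm 1}^{\pm}$, one forms the iterated $q$-bracket and uses the Hecke identity (PS6) from Proposition \ref{prop: properties of Perk-Schultz matrix} together with a judicious specialization of spectral parameters to collapse one variable, producing the symmetrized cubic Serre relation. At the distinguished odd node $M$ the isotropic anticommutation $X_M^{\pm}(z) X_M^{\pm}(w) + X_M^{\pm}(w) X_M^{\pm}(z) = 0$ forces the usual cubic relation to degenerate, and the only surviving Serre relation is the quartic one stated in the theorem. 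The natural strategy is to derive it from a four-fold tensor RTT equation on $\BV^{\otimes 4}$, extracting the matrix coefficient corresponding to the weight $\epsilon_{M-1} + 2\epsilon_M + \epsilon_{M+1}$ and then using all previously established relations to clear the unwanted terms. This is the principal obstacle because the isotropic odd simple root eliminates the naive Weyl-symmetrization trick available in the non-graded setting, and the sign bookkeeping intrinsic to the super case makes the calculation substantially more involved than in \cite{Ding-Frenkel}; this is precisely the step treated in \cite{Zyz}, and rather than redo it here I would invoke that reference for the final verification.
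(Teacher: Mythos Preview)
Your outline is a reasonable sketch of the Ding--Frenkel strategy and its super extension in \cite{Zyz}, but you should be aware that the paper does \emph{not} actually prove this theorem. The statement is marked with a terminal $\Box$ and carries the citation \cite{Ding-Frenkel,Zyz}; the introduction to \S\ref{sec: definition} says explicitly that all results of that section are proved ``except the Ding-Frenkel homomorphism relating Drinfeld realization and RTT realization, which requires lengthy calculations as done in \cite{Ding-Frenkel,Zyz}.'' So there is no proof in the paper to compare against: the authors simply defer to the literature, and your proposal is essentially a summary of what those references do. In that sense your approach is the same as the paper's, namely to cite \cite{Ding-Frenkel,Zyz} and not carry out the computation in full.
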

Here $[a,b]_u := ab - (-1)^{|a||b|} u b a$ for $\super$-homogeneous $a,b \in U_q(\Gaff)$ and $u \in \BC$.

\noindent
\textit{Remark.} Let $\Rc_q(z,w)$ be $R(z,w)$ divided by $zq - w q^{-1}$. Then $\Rc_{q^{-1}}(w,z)$
is indeed the $R$-matrix used in \cite{CWWZ,Zyz} (resp. in \cite{Ding-Frenkel}) to define centrally extended quantum affine $\mathfrak{gl}(M,N)$ (resp. quantum affine $\mathfrak{gl}(M,0)$ with $N = 0$). The above theorem appeared in \cite{CWWZ,Zyz} (after trivialization of center $q^c = 1$ and minor corrections of exponents of $q$); the idea of proof is exactly the same as in the non-graded case \cite{Ding-Frenkel}.   

\subsection{Coproduct formulas.} For $s \in \BZ_{\geq 0}$ let $K_{i,\pm s}^{\pm}$ by the coefficient of $z^{\pm s}$ in the power series $K_i^{\pm}(z)$. The following proposition will be proved in Appendix \ref{sec: app}. 
\begin{prop}     \label{prop: Drinfeld coproduct estimation}
 Let $i \in I_0, j \in I, n \in \BZ, s \in \BZ_{\geq 0}$. Then 
\begin{eqnarray}
&& \Delta (K_{j,\pm s}^{\pm}) - \sum_{a=0}^s K_{j,\pm a}^{\pm} \otimes K_{j,\pm (s-a)}^{\pm} \in \sum_{\alpha \in \BQ_{\geq 0} \setminus \{0\}} (U_q(\Gaff))_{\alpha} \otimes (U_q(\Gaff))_{-\alpha},  \label{equ: coproduct H}    \\
&&\Delta (X_{i,n}^+) - 1 \otimes X_{i,n}^+ \in \sum_{\alpha \in \BQ_{\geq 0} \setminus \{ 0\}} (U_q(\Gaff))_{\alpha} \otimes (U_q(\Gaff))_{\alpha_i-\alpha},  \label{equ: coproduct X}   \\
&&\Delta (X_{i,n}^-) - X_{i,n}^- \otimes 1 \in \sum_{\alpha \in \BQ_{\geq 0} \setminus \{ 0\}} (U_q(\Gaff))_{\alpha-\alpha_i} \otimes (U_q(\Gaff))_{-\alpha}.  \label{equ: coproduct Y}
\end{eqnarray}
\end{prop}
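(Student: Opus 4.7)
The plan is to exploit the Gauss-decomposition expressions of the Drinfeld generators in terms of the RTT generators from \S\ref{sec: Drinfeld}, combined with the explicit coproduct formulas \eqref{for: coproduct for quantum affine superalgebra S}--\eqref{for: coproduct for quantum affine superalgebra T} for $s_{ij}^{(n)}, t_{ij}^{(n)}$ and the additivity of the $\BQ$-weight grading of \S\ref{sec: weight grading}. Since $K_j^{\pm}(z)$, $e_{kj}^{\pm}(z)$, $f_{jk}^{\pm}(z)$ can be written as quasi-determinants in the matrix entries of $S(z)$ and $T(z)$, their coproducts are in principle computable in closed form; the task is to identify the \emph{leading} piece and show that the remainder lands in the prescribed weight-graded subspace.

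My first step would be to establish \eqref{equ: coproduct H} for $K_j^+$, coupled with the companion estimates
\begin{gather*}
\Delta(e_{kj,s}^+) - 1\otimes e_{kj,s}^+ \in \sum_{\alpha\in \BQ_{\geq 0}\setminus\{0\}} (U_q(\Gaff))_{\alpha}\otimes (U_q(\Gaff))_{\epsilon_k-\epsilon_j-\alpha}, \\
\Delta(f_{jk,s}^+) - f_{jk,s}^+\otimes 1 \in \sum_{\alpha\in \BQ_{\geq 0}\setminus\{0\}} (U_q(\Gaff))_{\epsilon_j-\epsilon_k-\alpha}\otimes (U_q(\Gaff))_{-\alpha},
\end{gather*}
by simultaneous induction on $j\in I$. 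The base $j=1$ gives $K_1^+(z)=s_{11}(z)$, and \eqref{for: coproduct for quantum affine superalgebra S} yields the claim at once since each off-diagonal term $s_{1k}^{(a)}\otimes s_{k1}^{(s-a)}$ for $k>1$ has first and second $\BQ$-weights $\epsilon_1-\epsilon_k$ and $\epsilon_k-\epsilon_1$ with $\epsilon_1-\epsilon_k\in \BQ_{\geq 0}\setminus\{0\}$. For the inductive step, I would use the identity (read off from $S=FKE$)
\[
s_{jj}(z) = K_j^+(z) + \sum_{k<j} f_{jk}^+(z)\,K_k^+(z)\,e_{kj}^+(z),
\]
apply $\Delta$ to both sides, substitute the induction hypothesis into the sum on the right, and solve for $\Delta(K_j^+(z))$; the additivity of the $\BQ$-weight grading forces the various error terms to recombine into the claimed subspace. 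The analogous identities for $s_{j,j+1}(z)$ and $s_{j+1,j}(z)$ then let one extract $\Delta(e_{j,j+1}^+(z))$ and $\Delta(f_{j+1,j}^+(z))$ after multiplying by $\Delta(K_j^+(z))^{-1}$. The parallel estimates for $K_j^-, e_{kj}^-, f_{jk}^-$ coming from $T(z)$ are obtained identically using \eqref{for: coproduct for quantum affine superalgebra T}; alternatively, one can transport them via the Hopf isomorphism $\Psi$ of \eqref{equ: transposition in quantum affine superalgebra}.

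The estimates \eqref{equ: coproduct X} and \eqref{equ: coproduct Y} then follow by subtraction: $X_i^+(z)=e_{i,i+1}^+(z)-e_{i,i+1}^-(z)$ inherits the leading term $1\otimes X_i^+(z)$, with the remainders of both $\Delta(e_{i,i+1}^{\pm}(z))$ already controlled in $\sum_{\alpha>0}(U_q(\Gaff))_{\alpha}\otimes (U_q(\Gaff))_{\alpha_i-\alpha}$ by the first step, and symmetrically for $X_i^-$. The main obstacle throughout is the bookkeeping of weights across multiplication by $\Delta(K_j^+(z))^{-1}$: $K_j^+(z)^{-1}$ is only a formal series whose coefficients lie in the weight-zero part of $U_q(\Gaff)$, and a priori inversion could spoil the shape of the error terms. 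Here the already-established estimate \eqref{equ: coproduct H} is crucial, as it forces $\Delta(K_j^+(z))^{-1}$ to coincide with $K_j^+(z)^{-1}\otimes K_j^+(z)^{-1}$ modulo terms of the same positive-weight-tensor-negative-weight shape, so that left- and right-multiplication by it preserves the shape of the errors. This coupling is exactly why \eqref{equ: coproduct H} must be proved before \eqref{equ: coproduct X} and \eqref{equ: coproduct Y}, and why the $K$-, $e$-, and $f$-estimates have to be established by a single simultaneous induction rather than sequentially.
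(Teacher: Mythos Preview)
Your approach is genuinely different from the paper's, and it can be made to work, but you are glossing over the one nontrivial point.

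\textbf{Comparison with the paper.} The paper does \emph{not} compute $\Delta(K_j^{\pm})$ directly from Gauss decomposition. Instead it introduces Drinfeld--Jimbo--type generators $E_0,E_1,\ldots,E_{M+N-1}$ (with $E_0$ built from $s_{M+N,1}^{(1)}$), expresses the degree-one Cartan elements $h_{i,1}$ as iterated $q$-brackets of the $E_i$'s (Lemma~\ref{lem: quantum brackets for degree 1 cartan elements}), uses the simple coproduct $\Delta(E_i)=1\otimes E_i+E_i\otimes L_i^{-1}$ to estimate $\Delta(h_{i,1})$, and then climbs in $n$ via the recursion $[h_{i-1,1},X_{i,n}^{\pm}]=\pm c_iX_{i,n+1}^{\pm}$ (Lemma~\ref{lem: coproduct for higher degree positive root vectors}). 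Only at the very end does it pin down the $A_i\otimes A_i$ part of $\Delta(\Psi_{i,n})$, and this is done by a representation-theoretic trick (evaluating on pairs of highest $\ell$-weight vectors, \S\ref{sec: proof of coproduct formulas}). Your route via quasi-determinants is more direct and avoids both the affine $E_0$ and the representation-theory endgame; the paper's route, on the other hand, yields finer information (e.g.\ the refined estimates in Lemma~\ref{lem: coproduct for higher degree positive root vectors} involving the subalgebras $U_i$) and makes the link with the Borel subalgebra of $U_q'(\widehat{\mathfrak{sl}(M,N)})$ explicit.

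\textbf{The gap.} Your sentence ``the additivity of the $\BQ$-weight grading forces the various error terms to recombine into the claimed subspace'' hides the entire content of the argument. Concretely, $\Delta(s_{jj}(z))=\sum_l (\pm)\, s_{jl}(z)\otimes s_{lj}(z)$ contains, for each $l<j$, a term in $(U)_{\epsilon_j-\epsilon_l}\otimes(U)_{\epsilon_l-\epsilon_j}$ with first tensorand of \emph{negative} weight and second of \emph{positive} weight --- exactly the wrong shape. These must cancel against contributions from $\sum_{k<j}\Delta(f_{jk}^+)\Delta(K_k^+)\Delta(e_{kj}^+)$, and this cancellation is not forced by weight-bookkeeping alone; it is a genuine identity. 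One clean way to see it: regard $(\Delta(s_{ij}(z)))_{i,j}$ as the matrix product $S^{(1)}(z)S^{(2)}(z)$ with $S^{(1)}=S\otimes 1$, $S^{(2)}=1\otimes S$, Gauss-decompose each factor, and observe that $E^{(1)}F^{(2)}$ is a unipotent matrix with entries in the subalgebra $U^+\otimes U^-:=\sum_{\alpha\geq0}(U)_\alpha\otimes\sum_{\beta\leq0}(U)_\beta$ and diagonal part $1\otimes1+(\text{strictly positive}\otimes\text{strictly negative})$; uniqueness of Gauss decomposition then gives $\Delta(K_j^+)\in(K_j^+\otimes K_j^+)(1+U^{>0}\otimes U^{<0})$. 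Without this (or an equivalent computation), your inductive step does not close. Two smaller points: your error term for $f_{jk}^+$ should read $(U)_{\epsilon_j-\epsilon_k+\alpha}\otimes(U)_{-\alpha}$ (total weight must be $\epsilon_j-\epsilon_k$), and your induction at step $j$ must produce $\Delta(e_{kj}^+)$ and $\Delta(f_{jk}^+)$ for \emph{all} $k<j$, not only $k=j-1$, since all of these feed into $K_{j}^+$.
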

\subsection{Highest $\ell$-weight modules.}  \label{sec: highest l weight} 
 Let $V$ be a $U_q(\Gaff)$-module. A non-zero vector $v \in V \setminus \{0\}$ is called a {\it highest $\ell$-weight vector} if $v$ is $\super$-homogeneous and 
\begin{displaymath}
s_{ij}^{(n)} v = 0 = t_{ij}^{(n)}v,\quad s_{kk}^{(n)}v,t_{kk}^{(n)}v \in \BC v \quad (n \in \BZ_{\geq 0},\ i,j,k \in I,\ i < j).  
\end{displaymath}
V is called a {\it highest $\ell$-weight module} if $V = U_q(\Gaff)v$ for some highest $\ell$-weight vector. The notions of  {\it lowest $\ell$-weight vector} and {\it lowest $\ell$-weight module} are defined by replacing the above condition $(i< j)$ with $(i > j)$. Similarly, one can define the notions of highest/lowest $\ell$-weight modules/vectors for representations of the $q$-Yangian $Y_q(\Glie)$.

Let $V, V'$ be $U_q(\Gaff)$-modules and let $v,v'$ be highest $\ell$-weight vectors in $V,V'$ respectively. Let $f_i^{\pm}(z),g_i^{\pm}(z) \in (\BC[[z^{\pm 1}]])^{\times}$ for $i \in I$ be such that
\begin{displaymath}
s_{ii}(z) v = f_i^+(z) v,\quad t_{ii}(z) v = f_i^-(z) v,\quad s_{ii}(z) v' = g_i^+(z) v',\quad t_{ii}(z) v' = g_i^-(z) v'. 
\end{displaymath}
From the Gauss decomposition in \S \ref{sec: Drinfeld}, we see that $v \otimes v'$  is of highest $\ell$-weight:
\begin{align*}
K_i^{\pm}(z) v = f_i^{\pm}(z)v,\quad K_i^{\pm}(z) v' = g_i^{\pm}(z) v', \quad K_i^{\pm}(z) (v \otimes v') = f_i^{\pm}(z)g_i^{\pm}(z) v \otimes v'.
\end{align*}
 This observation will be used in Appendix \ref{sec: app} to conclude the proof of Proposition \ref{prop: Drinfeld coproduct estimation}.

The following super analogue of Chari's lemma \cite[Lemma 4.2]{Chari} is essential in \S\S \ref{sec: Yangian}-\ref{sec: proof}.
\begin{lem}    \label{lem: cyclicity Chari}
Let $V_+$ (resp. $V_-$) be a $U_q(\Gaff)$-module of highest (resp. lowest) $\ell$-weight. Let $v_+ \in V_+$ (resp. $v_- \in V_-$) be a highest (resp. lowest) $\ell$-weight vector. Then the $U_q(\Gaff)$-module $V_+ \otimes V_-$ (resp. $V_- \otimes V_+$) is generated by $v_+ \otimes v_-$ (resp. $v_- \otimes v_+$).
\end{lem}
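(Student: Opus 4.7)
The plan is to exploit the Drinfeld triangular decomposition $U_q(\Gaff) = U^+ U^0 U^-$ (with $U^\pm$ generated by the $X_{i,n}^\pm$ and $U^0$ by the $K_{j,\pm s}^\pm$ via Theorem \ref{thm: Ding-Frenkel}), combined with the coproduct estimates of Proposition \ref{prop: Drinfeld coproduct estimation}. Since $v_-$ is killed by every $X_{i,n}^-$ and is an $\ell$-weight vector for $U^0$, a PBW-type argument together with the Cartan relations yields $V_- = U^+ v_-$; symmetrically $V_+ = U^- v_+$. Setting $L := U_q(\Gaff)(v_+ \otimes v_-)$, the task is to prove $L = V_+ \otimes V_-$. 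I describe the argument for $V_+ \otimes V_-$; the statement for $V_- \otimes V_+$ is entirely analogous.

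The first milestone is to show $V_+ \otimes \BC v_- \subseteq L$. For $u^- \in U^-$ written as a PBW monomial $X_{i_1,n_1}^- \cdots X_{i_k,n_k}^-$, expanding $\Delta(u^-)$ as the product of the individual coproducts and applying \eqref{equ: coproduct Y} factor by factor gives, by a short induction on $k$,
\[
\Delta(u^-) - u^- \otimes 1 \in \sum_{\alpha \in \BQ_{\geq 0} \setminus \{0\}} (U_q(\Gaff))_{\bullet} \otimes (U_q(\Gaff))_{-\alpha},
\]
because in any term of the expansion that includes at least one correction factor, the second tensorand is forced into strictly negative weight. When evaluated at $v_+ \otimes v_-$, each correction would send $v_-$ into $(V_-)_{\lambda_- - \alpha}$ with $\alpha > 0$; but $V_-$ carries no weights below $\lambda_-$, so every correction vanishes. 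Hence $\Delta(u^-)(v_+ \otimes v_-) = u^- v_+ \otimes v_- \in L$.

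The second milestone is an induction on the weight $\beta \in \BQ_{\geq 0}$ of an element $u^+ \in (U^+)_\beta$ for the claim \emph{$u^- v_+ \otimes u^+ v_- \in L$ for every $u^- \in U^-$}. The base $\beta = 0$ is the first milestone. For the inductive step, the analogous estimate derived from \eqref{equ: coproduct X} reads
\[
\Delta(u^+) - 1 \otimes u^+ \in \sum_{0 < \alpha \leq \beta} (U_q(\Gaff))_{\alpha} \otimes (U_q(\Gaff))_{\beta - \alpha},
\]
so $\Delta(u^+)(u^- v_+ \otimes v_-) = u^- v_+ \otimes u^+ v_- + \sum_j A_j u^- v_+ \otimes B_j v_- \in L$, with each $A_j$ of strictly positive weight and $B_j v_- \in (V_-)_{\lambda_- + \beta - \alpha_j}$. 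Using $V_- = U^+ v_-$ and the weight grading, each $B_j v_-$ may be written as $\tilde u_j^+ v_-$ with $\tilde u_j^+ \in (U^+)_{\beta - \alpha_j}$ of weight strictly less than $\beta$; the inductive hypothesis then places every correction in $L$, and we conclude $u^- v_+ \otimes u^+ v_- \in L$. Since the vectors $u^- v_+ \otimes u^+ v_-$ span $V_+ \otimes V_-$, we obtain $L = V_+ \otimes V_-$.

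The main obstacle is the clean extension of Proposition \ref{prop: Drinfeld coproduct estimation} from the generators to arbitrary PBW monomials; this is essentially bookkeeping using the multiplicativity of $\Delta$ and the additivity of the $\BQ$-grading, but demands care in the super setting to control signs and weights simultaneously. The case of $V_- \otimes V_+$ is handled by the same two-step strategy with the roles of $U^+$ and $U^-$ swapped, using \eqref{equ: coproduct X} at the first milestone (the corrections die on $v_+$, which has no weights above $\lambda_+$) and \eqref{equ: coproduct Y} at the inductive step.
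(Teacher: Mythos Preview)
Your argument for $V_+ \otimes V_-$ is correct and matches the paper's proof closely: both first show $V_+ \otimes \BC v_- \subseteq L$ using that the second tensorands of the correction terms in $\Delta(X_{i,n}^-)$ lie in strictly negative $\BQ$-weight and therefore kill $v_-$, and then induct up through $V_-$ using the coproduct estimate for $X_{i,n}^+$. The paper inducts on the length $\ell(\beta)$ and applies a single $X_{i,n}^+$ at each step, while you induct on $\beta$ itself and apply a full monomial $u^+$; these are equivalent.

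There is, however, a genuine gap in your treatment of $V_- \otimes V_+$. The coproduct estimates of Proposition \ref{prop: Drinfeld coproduct estimation} are asymmetric: the dominant term of $\Delta(X_{i,n}^-)$ is $X_{i,n}^- \otimes 1$, while that of $\Delta(X_{i,n}^+)$ is $1 \otimes X_{i,n}^+$. Your proposed first milestone for $V_- \otimes V_+$ is to produce $u^+ v_- \otimes v_+$ by applying $\Delta(u^+)$ to $v_- \otimes v_+$ and letting the corrections die on $v_+$. But the dominant term $1 \otimes u^+$ sends $v_- \otimes v_+$ to $\pm\, v_- \otimes u^+ v_+ = 0$, not to $u^+ v_- \otimes v_+$; the vector you need lies among the \emph{corrections}, and those do not all vanish on $v_+$ (the second tensorands of the corrections have weight $\beta - \alpha$, which is in $-\BQ_{\geq 0}$ precisely when $\alpha \geq \beta$, so they can act nontrivially on $v_+$). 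Thus the naive role-swap of $U^+$ and $U^-$ does not go through.

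The paper sidesteps this by invoking the Hopf superalgebra isomorphism $f_{J,I}: U_q(\widehat{\mathfrak{gl}(N,M)})^{\mathrm{cop}} \to U_q(\Gaff)$ of Proposition \ref{prop: from gl(N,M) to gl(M,N)}, which preserves highest/lowest $\ell$-weight vectors but reverses the order of tensor factors; pulling $V_- \otimes V_+$ back along $f_{J,I}$ yields a module of the form $V_+' \otimes V_-'$ over $U_q(\widehat{\mathfrak{gl}(N,M)})$, to which the already-established first case applies. You should either cite this reduction or supply a direct argument that controls the correction terms more carefully (e.g.\ via the refined estimate of Lemma \ref{lem: coproduct for higher degree positive root vectors}, together with a check that the Cartan scalars arising there are nonzero).
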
  
\begin{proof}
This comes essentially from Proposition \ref{prop: Drinfeld coproduct estimation} on coproduct of Drinfeld generators. We only prove the first part: $V_+ \otimes V_- = U_q(\Gaff) (v_+ \otimes v_-)$, as the second part follows by using the Hopf superalgebra isomorphism $f$ in Proposition \ref{prop: from gl(N,M) to gl(M,N)}, which preserves highest/lowest $\ell$-weight properties. Since $V_-$ is a lowest $\ell$-weight $U_q(\Gaff)$-module with lowest $\ell$-weight vector $v_-$, $V_-$ is spanned as a vector superspace by the $X_{i_1,m_1}^+ X_{i_2,m_2}^+ \cdots X_{i_s,m_s}^+ v_-$ for $s \in \BZ_{\geq 0}$ and $i_t \in I_0, m_t \in \BZ$.  Hence, $V_-$ is $\BQ_{\geq 0}$-graded by the action of the $s_{ii}^{(0)}$ with $(V)_{\gamma}$ spanned by the above vectors such that $\gamma = \alpha_{i_1}+\alpha_{i_2} + \cdots + \alpha_{i_s}$. This induces a $\BZ_{\geq 0}$-grading on $V$ with $(V_-)_n$ spanned by the above vectors with $n = s$. We prove by induction on $n \in \BZ_{\geq 0}$ that 
\begin{displaymath}
(P_n):\ V_+ \otimes (V_-)_n \subseteq U_q(\Gaff)(v_+ \otimes v_-).
\end{displaymath}
When $n = 0$, $(V_-)_0 = \BC v_-$. For all $v \in V_+$, since $(U_q(\Gaff))_{-\alpha} v_- = 0$ for $\alpha \in \BQ_{\geq 0} \setminus \{0\}$, 
\begin{displaymath}
X_{i,m}^- (v \otimes v_-) = X_{i,m}^- v \otimes v_-.
\end{displaymath}
Since $V_+$ is spanned by the $X_{i_1,m_1}^- X_{i_2,m_2}^- \cdots X_{i_s,m_s}^- v_+$, we get $V_+ \otimes v_- \subseteq U_q(\Gaff)(v_+ \otimes v_-)$. Now assume $(P_k)$ for $k \leq n$. Let us prove $(P_{n+1})$. For $\super$-homogeneous vectors  $v_1 \in V_+$ and $v_2 \in (V_-)_{\beta} \subseteq (V_-)_n$ where $\beta \in \BQ_{\geq 0}$:
\begin{eqnarray*}
&&X_{i,m}^+ (v_1 \otimes v_2) \in  (-1)^{|i||v_1|} v_1 \otimes X_{i,m}^+ v_2 + \sum_{\alpha \in \BQ_{\geq 0}\setminus \{0\}} (U_q(\Gaff))_{\alpha} v_1 \otimes (U_q(\Gaff))_{\alpha_i - \alpha}v_2, \\
&&(U_q(\Gaff))_{\alpha_i - \alpha}v_2 \subseteq (V_-)_{\beta + \alpha_i - \alpha} \subseteq  \sum_{k\leq n} (V_-)_k \quad \textrm{for}\ \alpha \in \BQ_{\geq 0}\setminus \{0\}.
\end{eqnarray*}
It follows that $v_1 \otimes X_{i,m}^+ v_2 \in U_q(\Gaff) (v_+ \otimes v_-)$. As $(V_-)_{n+1}$ is spanned by the $X_{i,m}^+ v_2$ with $v_2 \in (V_-)_{n}$, we conclude.
\end{proof}
\section{Representations of the $q$-Yangian $Y_q(\mathfrak{gl}(1,1))$}  \label{sec: Yangian}
Fix $M = N = 1$ and $\Glie = \mathfrak{gl}(1,1)$. In this section we study the category $\CF$ of finite-dimensional representations of the $q$-Yangian $Y_q(\Glie)$. 
\subsection{Simple objects in $\CF$.}  \label{sec: simple}
 Let us first construct some obvious $Y_q(\Glie)$-modules.

 Let $D = \BC v$ be an one-dimensional $Y_q(\Glie)$-module. As $v$ is a highest/lowest $\ell$-weight vector, there exist $s \in \super, a,b \in \BC^{\times}$ and $f(z),g(z) \in 1 + z\BC[[z]]$ such that
\begin{displaymath}
|v| = s,\quad s_{11}(z)v = a f(z)v,\quad s_{22}(z) v = b g(z) v,\quad s_{12}(z) v = s_{21}(z) v = 0. 
\end{displaymath} 
It follows from Theorem \ref{thm: Ding-Frenkel} that $X_{1,n}^+ v = 0 = X_{1,n+1}^- v$. Henceforth $K_1^+(z)(K_2^{+}(z))^{-1} v \in \BC^{\times} v$, meaning $f(z) = g(z)$. Reciprocally, for $s \in \super, f(z) \in 1 + z\BC[[z]]$ and $a,b \in \BC^{\times}$, the above formula indeed defines a one-dimensional $Y_q(\Glie)$-module structure on $\BC v$ denoted by $\BC_{s,f,a,b}$. Define $\BC_s := \BC_{s,1,1,1}, \BC_f := \BC_{\even,f,1,1}$ and $\BC_{(a,b)} := \BC_{\even,1,a,b}$. Then as $Y_q(\Glie)$-modules, $\BC_{s,f,a,b} \cong \BC_s \otimes \BC_f \otimes \BC_{(a,b)}$.

 Following \cite{Tsuboi}, let us define $\dot{U}_q(\Glie)$ to be the superalgebra generated by $\dot{s}_{ij},\dot{t}_{ji},\dot{s}_{ii}^{-1}$ for $1 \leq i \leq j \leq 2$, with $\super$-degrees and defining relations the same as those for $U_q(\Glie)$ in \S \ref{sec: evaluation maps} except the last relation which is replaced by $$\dot{s}_{ii}\dot{s}_{ii}^{-1} = 1 = \dot{s}_{ii}^{-1} \dot{s}_{ii}.$$
$\dot{U}_q(\Glie)$ is $\BQ$-graded with respect to the conjugate actions of the $\dot{s}_{ii}$. For $1 \leq i < j \leq 2$, $|\dot{s}_{ij}|_{\BQ} = \epsilon_i - \epsilon_j = - |\dot{t}_{ji}|_{\BQ}$. Furthermore,
Let us write down the defining relations of $\dot{U}_q(\Glie)$:
\begin{align*}
& \dot{s}_{12}^2 = 0 = \dot{t}_{21}^2,\quad \dot{s}_{12}\dot{t}_{21} + \dot{t}_{21}\dot{s}_{12} = (q-q^{-1})(\dot{t}_{11}\dot{s}_{22} - \dot{s}_{11}\dot{t}_{22}).
\end{align*}
There are well-defined evaluation morphisms $\ev_a$ for $a \in \BC^{\times}$
\begin{displaymath}
\ev_a: Y_q(\Glie) \longrightarrow \dot{U}_q(\Glie),\quad s_{ij}(z) \mapsto \dot{s}_{ij} - z a \dot{t}_{ij}.
\end{displaymath}
As usual, we understand that $\dot{s}_{ji} = 0 = \dot{t}_{ij}$ when $i < j$. 
From the above presentation of $\dot{U}_q(\Glie)$ and from the evaluation morphisms, it is easy to build up explicit representations for $Y_q(\Glie)$. Let $a \in \BC^{\times}$. We shall define two evaluation representations $\rho_a^{\pm}$ of $Y_q(\Glie)$ on the vector superspace $\BV = \BC v_1 \oplus \BC v_2$. It is enough to give their generating matrices $[\rho_a^{\pm}] := (\rho_a^{\pm}(s_{ij}(z)))_{1\leq i,j \leq 2}$ with respect to the standard basis $(v_1,v_2)$. More precisely,
\begin{eqnarray*}
[\rho_a^+] &=& \begin{pmatrix}
(1-za)E_{11} + (q^{-1}-zaq) E_{22} & (q-q^{-1})E_{12} \\
-za E_{21} & E_{11}+q^{-1}E_{22}
\end{pmatrix},\\ 
\left[\rho_a^-\right] &=& \begin{pmatrix}
E_{11} + q^{-1}E_{22} & (q^{-1}-q)E_{12} \\
-zaE_{21} & (1-za)E_{11} + (q^{-1}-zaq)E_{22}
\end{pmatrix}.
\end{eqnarray*} 
Let $L_{1,a}^{\pm}$ be the $Y_q(\Glie)$-modules associated with the representations $\rho_a^{\pm}$.

Now we have a highest $\ell$-weight classification of finite-dimensional simple $Y_q(\Glie)$-modules.
\begin{lem}  \label{lem: finite-dimensional simples for gl(1,1)}
(1) A finite-dimensional simple $Y_q(\Glie)$-module must be of highest $\ell$-weight.

(2) Let $S$ be a simple $Y_q(\Glie)$-module generated by a highest $\ell$-weight vector $v$ with
\begin{displaymath}
|v| = \even,\quad s_{ii}(z) v = f_i(z) v,\quad f_i(z) \in (\BC[[z]])^{\times}\quad \textrm{for}\ i = 1,2.
\end{displaymath}
Then $S$ is finite-dimensional if and only if $\frac{f_1(z)}{f_2(z)} = \frac{P(z)}{Q(z)}$ for some polynomials $P(z),Q(z) \in \BC[z]$ with non-zero constant terms.
\end{lem}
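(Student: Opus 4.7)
The plan is to prove (1) by a standard highest-weight argument and (2) by explicit construction paired with a key RTT computation, the main obstacle being the super-sign bookkeeping in the latter.

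For (1), I would use the $\BQ$-grading on $Y_q(\Glie)$ to decompose the finite-dimensional simple module $S$ into finitely many weight spaces; since $s_{12}^{(n)}$ raises the $\alpha_1$-grade, the subspace $S^+ := \{x \in S : s_{12}^{(n)} x = 0 \text{ for all } n \geq 0\}$ is non-zero. By extracting two specific components of $R_{23}(z,w) S_{12}(z) S_{13}(w) = S_{13}(w) S_{12}(z) R_{23}(z,w)$---namely the $v_1 \otimes v_1$- and $v_1 \otimes v_2$-coefficients after applying to $v_1 \otimes v_2$---one checks that $S^+$ is stable under $s_{11}(z)$ and $s_{22}(z)$ and that $[s_{11}(z), s_{22}(w)]$ acts trivially on $S^+$. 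A common eigenvector for this commuting family is then a highest $\ell$-weight vector, and simplicity of $S$ concludes (1).

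For the \emph{if} direction of (2), I would factor $P(z) = P(0)\prod_i(1 - a_i z)$ and $Q(z) = Q(0)\prod_j(1 - b_j z)$, and form $\bigotimes_i L_{1,a_i}^+ \otimes \bigotimes_j L_{1,b_j}^-$ using the 2-dimensional evaluation modules. A direct computation shows $L_{1,a}^+$ contributes the factor $(1 - az)$ to the highest-$\ell$-weight ratio $f_1/f_2$, while $L_{1,a}^-$ contributes $(1 - az)^{-1}$. By the coproduct formulas \eqref{for: coproduct for quantum affine superalgebra S}, the tensor of highest-weight vectors is itself a highest $\ell$-weight vector with $f_1/f_2 = \prod(1 - a_i z)/\prod(1 - b_j z)$. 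The cyclic submodule it generates is finite-dimensional; its simple quotient, adjusted by an appropriate one-dimensional twist, is the desired simple module with highest $\ell$-weight $(f_1, f_2)$.

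The \emph{only if} direction is the main computation. Comparing the coefficient of $v_2 \otimes v_1$ on both sides of the RTT relation applied to $v_1 \otimes v_2$ should yield the identity
\[
(z-w)\{s_{12}(w),\, s_{21}(z)\} = w(q - q^{-1})\bigl(s_{11}(w) s_{22}(z) - s_{11}(z) s_{22}(w)\bigr).
\]
Applied to $v$ with $s_{12}(w) v = 0$, this becomes $(z-w) s_{12}(w) s_{21}(z) v = w(q - q^{-1})(f_1(w) f_2(z) - f_1(z) f_2(w)) v$. Finite-dimensionality implies $W := \mathrm{span}\{s_{21}^{(n)} v : n \geq 0\} \subset S_{-\alpha_1}$ is finite-dimensional, so there exists $Q_0(z) \in \BC[z]$ with $Q_0(0) \neq 0$ such that $Q_0(z) s_{21}(z) v$ is a polynomial in $z$ with values in $W$. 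Since $S$ is simple, its highest-weight space equals $\BC v$, so $s_{12}(w) W \subset \BC[[w]]\, v$; hence $Q_0(z)(f_1(w) f_2(z) - f_1(z) f_2(w))$ is polynomial in $z$. Extracting coefficients $[w^k]$ and observing that the vectors $(a_k, b_k) := ([w^k] f_1, [w^k] f_2)$ are not all proportional---unless $f_1/f_2$ is a nonzero constant, in which case the claim is trivial---a Cramer-type argument forces $Q_0 f_1$ and $Q_0 f_2$ to be polynomials. Combined with $f_i(0) \neq 0$, this yields $f_1/f_2 = P/Q$ with $P(0), Q(0) \neq 0$.
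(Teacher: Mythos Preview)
Your treatments of part (1) and the ``if'' direction of part (2) agree with the paper's: the paper refers to \cite[Lemma 4.12]{Z} for the weight-space argument and builds the finite-dimensional module as a subquotient of the same tensor product $\bigotimes_i L_{1,c_i}^+ \otimes \bigotimes_j L_{1,d_j}^-$, tensored with a one-dimensional twist.

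For the ``only if'' direction, however, there is a genuine gap. You assert that because $W = \mathrm{span}\{s_{21}^{(n)} v : n \geq 0\}$ is finite-dimensional, there exists a polynomial $Q_0(z)$ with $Q_0(0) \neq 0$ such that $Q_0(z)\, s_{21}(z)\,v$ is polynomial in $z$. This does not follow: finite-dimensionality of the span of the coefficients of a power series says nothing about rationality of that series. (If $W$ were one-dimensional with basis $w_0$, the coefficients $s_{21}^{(n)} v = c_n w_0$ could in principle be any sequence $(c_n)$.) What is needed is a linear recurrence with constant coefficients among the $s_{21}^{(n)} v$, and your RTT anticommutator identity alone does not supply one. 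The paper obtains rationality by invoking Theorem~\ref{thm: Ding-Frenkel} and \cite[Lemma~3.9]{HJ}: the Drinfeld presentation furnishes a degree-zero element (essentially $H_{1,1}$ or $h_{1,1}$) whose commutator with $X_{1,n}^-$ is proportional to $X_{1,n+1}^-$, so that $s_{21}^{(n+1)} v$ is obtained from $s_{21}^{(n)} v$ by a fixed endomorphism of $W$; Cayley--Hamilton then yields the recurrence, hence rationality. If you want to keep your direct RTT route, you must extract from the $s_{11}$--$s_{21}$ (or $s_{22}$--$s_{21}$) component of Relation~\eqref{rel: RSS = SSR} a shift relation of this kind before the rest of your argument can proceed.
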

\begin{proof}
The arguments in the proofs of \cite[Lem.4.12, Prop.6.1]{Z} apply to (1) and the \lq\lq only of\rq\rq part of (2): from the relations in Theorem \ref{thm: Ding-Frenkel} between the  $X_{1,n}^-$ and $K_{1,1}^+ = s_{11}^{(1)}$ one deduces linear relations of the $X_{1,n}^-v$ with $n > 0$; then from relations between $X_{1,0}^+$ and the $X_{1,n}^-$ one concludes that the coefficient of $v$ in $K_2^+(z)K_1^+(z)^{-1} v = s_{22}(z)s_{11}(z)^{-1} v$ is indeed a rational function. For the \lq\lq if\rq\rq part, write
\begin{displaymath}
P(z) = a \prod_{i=1}^m (1-z c_i),\quad Q(z) = b \prod_{j=1}^n (1 - z d_j),\quad c_i,d_j,a,b \in \BC^{\times}.
\end{displaymath}
Then as in the proof of \cite[Theorem 3.11]{HJ} $S$ is a sub-quotient of the tensor product
\begin{displaymath}
(\bigotimes_{i=1}^m L_{1,c_i}^+) \otimes (\bigotimes_{j=1}^n L_{1,d_j}^-) \otimes \BC_{(f_1(0),f_2(0))} \otimes \BC_{f'},
\end{displaymath}
where $f'(z) = f_1(z) f_1(0)^{-1} \prod_{i=1}^m (1-zc_i)^{-1}$. Since the $L_{1,a}^{\pm}$ are two-dimensional, $S$ must be finite-dimensional.
\end{proof}
Let us define $\CR$ to be the the set of rational functions $f(z) \in \BC(z)$ such that $f(0) = 1$. Note that $\CR$ can be viewed as a subset of $\BC[[z]]$. For $f \in \CR$, let $V(f)$ be the simple $Y_q(\Glie)$-module generated by a highest $\ell$-weight vector $v$ satisfying
\begin{displaymath}
|v| = \even,\quad s_{11}(z) v = f(z) v,\quad s_{22}(z) v = v.
\end{displaymath}
According to Lemma  \ref{lem: finite-dimensional simples for gl(1,1)}, $V(f)$ is finite-dimensional. Moreover, All finite-dimensional simple $Y_q(\Glie)$-modules can be factorized uniquely into  $V(f) \otimes D$ with $D$  one-dimensional and $f \in \CR$. For example, when $a \in \BC^{\times}$,
\begin{displaymath}
L_{1,a}^+ \cong V(1-za),\quad L_{1,a}^-  \cong V(\frac{1}{1-za}) \otimes \BC_{1-za}.
\end{displaymath}   
\subsection{Tensor product of simple modules.}For $f \in \CR$, let  $Z(f)$ (resp. $P(f)$) be the set of zeros (resp. poles) of the meromorphic function $f$.  It is possible that $\infty \in Z(f) \cup P(f)$.  The main result of this section can be stated as follows.
\begin{thm}  \label{thm: web property}
Let $f_1,f_2,\cdots,f_s \in \CR$. For $1 \leq i \leq s$, let $v_i$ be a highest $\ell$-weight vector in the simple $Y_q(\Glie)$-module $V(f_i)$. Let $V :=  \bigotimes_{i=1}^s V(f_i)$ and $v := \bigotimes_{i=1}^s v_i \in V$. Then 
\begin{itemize}
\item[(a)] $V = Y_q(\Glie) v$ if and only if $P(f_i) \cap Z(f_j) = \emptyset$ for all $1 \leq i < j \leq s$;
\item[(b)] $Y_q(\Glie)v$ is the unique simple sub-$Y_q(\Glie)$-module of $V$ if and only if $Z(f_i) \cap P(f_j) = \emptyset$ for all $1 \leq i < j \leq s$;
\item[(c)] $V$ is simple if and only if $P(f_i) \cap Z(f_j) = \emptyset$ for all $1 \leq i \neq j \leq s$. 
\end{itemize} 
\end{thm}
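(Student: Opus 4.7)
The plan is to exploit the fact that $\mathfrak{gl}(1,1)$ has a single simple root, which is odd, so the only relevant Drinfeld creation/annihilation generators $X_{1,n}^{\pm}$ satisfy $(X_{1,n}^{\pm})^2 = 0$ and anti-commute in pairs with parameters encoding rational-function data. Because of this, questions about the $Y_q(\Glie)$-module $V = \bigotimes_{i=1}^s V(f_i)$ reduce to very concrete linear-algebra questions on the finite-dimensional span of vectors $X_{1,n_1}^{-} \cdots X_{1,n_r}^{-} v$, with $r$ bounded by the total degree of the $f_i$. Throughout I will repeatedly use Chari's Lemma \ref{lem: cyclicity Chari} and its lowest-weight counterpart, together with the coproduct estimates from Proposition \ref{prop: Drinfeld coproduct estimation}, to pass between cyclicity statements and linear-independence statements.

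For part (a), I would first establish the easy direction: if some pole $a$ of $f_i$ coincides with a zero of $f_j$ for $i<j$, factor $V(f_i)$ into a chain of two-dimensional building blocks $L_{1,b}^{\pm}$ as in Lemma \ref{lem: finite-dimensional simples for gl(1,1)} and consider how the $K_1^+(z)/K_2^+(z)$ eigenvalue on $v$ factors. An intertwiner $L_{1,a}^- \otimes L_{1,a}^+ \to \BC_{\bullet}$ then produces a nontrivial morphism from $V$ to a proper quotient where the image of $v$ still generates, showing that $v$ cannot generate the whole of $V$. For the converse, by the coproduct formula I reduce, via induction on $s$, to the two-factor case. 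Here Chari's Lemma says that it suffices to produce the specific vector $v_1 \otimes (X_{1,n}^- v_2)$ for all $n \in \BZ$, or equivalently a particular lowest-weight vector in $v_1 \otimes V(f_2)$, in $Y_q(\Glie)(v_1 \otimes v_2)$. Applying $X_{1,n}^-$ to $v_1 \otimes v_2$ and projecting onto the weight space $(V(f_1))_{\even} \otimes (V(f_2))_{\odd}$ produces a family of vectors whose coefficients are essentially the Taylor coefficients of $f_1(z) \cdot (\text{polynomial depending on the chosen descendant of } v_2)$; the coefficients span everything precisely when $P(f_1)$ is disjoint from $Z(f_2)$. This is where the linear-independence calculation advertised as Lemma \ref{lem: linear independence of polynomials} enters.

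For part (b), I would use the duality of Proposition \ref{prop: from gl(N,M) to gl(M,N)} (or equivalently the anti-automorphism $\Psi$ of \eqref{equ: transposition in quantum affine superalgebra}), which swaps highest and lowest $\ell$-weight vectors and reverses the order of tensor products. Applying (a) to the reversed tensor product gives (b), since passing to the lowest-weight story swaps the roles of zeros and poles of each $f_i$. Part (c) is then formal: $V$ is simple iff it is both highest and lowest $\ell$-weight with the same $v$ generating, and the union of the constraints in (a) for $i<j$ and in (b) for $i<j$ is exactly the condition $P(f_i)\cap Z(f_j)=\emptyset$ for all $i\ne j$. To get simplicity (not merely irreducibility of the top cyclic piece), one notes that both the unique irreducible quotient and the unique irreducible submodule of $V$ coincide with $Y_q(\Glie)v$ under the joint hypothesis, forcing $V$ itself to be simple by a dimension/$\ell$-weight-multiplicity count on the Drinfeld Cartan characters.

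The main obstacle will be making the linear-independence step completely explicit. Concretely, after unwinding the coproducts of $X_{1,n}^-$, one is led to decide whether a certain family of rational functions $\{z^n f_1(z)Q_2(z)\,:\,n \ge 0\}$ modulo a subspace determined by the Drinfeld Cartan eigenvalues of $v_2$ spans what it needs to span, and the precise reformulation is a statement that certain polynomials of the form $\prod_i(1-za_i)$ with prescribed roots are linearly independent exactly when the relevant zero/pole sets are disjoint. Carrying out this reduction cleanly — keeping track of signs coming from the odd parity of $X_{1,n}^-$ and of the power-series versus polynomial distinctions — is the most delicate part of the argument; everything else is a bookkeeping exercise around Chari's Lemma.
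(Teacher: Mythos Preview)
Your proposal has the right ingredients --- Chari's Lemma, a linear-independence computation, and a duality for (b) --- but there are two genuine gaps.

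\textbf{Gap in (a).} The reduction ``via induction on $s$ to the two-factor case'' does not work as you describe it. In the inductive step $W:=V(f_2)\otimes\cdots\otimes V(f_s)$ is highest $\ell$-weight but typically not simple, so a two-factor statement about $V(f_1)\otimes V(g)$ with $g\in\CR$ does not apply to $V(f_1)\otimes W$. More seriously, Chari's Lemma tells you that $V(f_1)\otimes W$ is generated by $v_1^{-}\otimes w$ (lowest vector of the \emph{first} factor tensored with the highest of the rest), so the vector you must produce inside $Y_q(\Glie)(v_1\otimes w)$ is $v_1^{-}\otimes w$; for a non-prime $f_1$ this lies several weight steps below $v_1\otimes w$, not one, and a single application of $X_{1,n}^{-}$ followed by a projection does not reach it. The paper's remedy is to factorize \emph{first}: write every $V(f_i)$ as a tensor product of two-dimensional prime modules $V(\frac{1-za}{1-zb})$, and then run the induction on the (larger) total number of two-dimensional factors. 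In that setting the weight space $(V)_{-\alpha_1}$ has dimension exactly $s$, and a direct computation of $s_{21}(z)$ on $\bigotimes_i u_i^{+}$ yields coefficient polynomials $g_i(z)=\prod_{j<i}(1-zb_j)\prod_{j>i}(1-za_j)$; Lemma~\ref{lem: linear independence of polynomials} then gives \emph{both} directions of (a) from this single identity, so your separate intertwiner construction for the ``only if'' direction is unnecessary.

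\textbf{Gap in (b).} Neither $\Psi$ nor $f_{J,I}$ does what you need on $Y_q(\Glie)$-modules. The map $\Psi$ exchanges the $s$- and $t$-generators and hence does not restrict to the $q$-Yangian; and for $M=N=1$ the map $f_{J,I}$ reverses tensor order but \emph{preserves} highest versus lowest $\ell$-weight (it sends $s_{12;J}$ to a multiple of $s_{12}$, not of $s_{21}$). The paper uses Hopf-algebra module duality instead: from the explicit inverse of $[\rho_{a,b}]$ one finds $V(f)^{*}\cong V(f^{-1})\otimes D_f$ with $D_f$ one-dimensional, and this converts (b) into the statement that $\bigotimes_i V(f_i)$ is of \emph{lowest} $\ell$-weight under the mirror condition $Z(f_i)\cap P(f_j)=\emptyset$ for $i<j$. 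That lowest-$\ell$-weight statement is then proved by the computation parallel to (a), expanding $s_{12}(z)$ on the tensor of lowest vectors. Your derivation of (c) from (a) and (b) is fine.
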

\begin{rem}
The theorem above can be viewed as a super analogue of \cite[Theorems 3.4,4.8]{CP1} on classification and construction of finite-dimensional simple $U_q(\widehat{\mathfrak{sl}_2})$-modules in terms of Drinfeld polynomials. See \cite[Theorem 4.6]{Mukhin} for a closer statement involving rational functions instead of Drinfeld polynomials. (c) says that tensor products of simple modules satisfy the web property, as proved in \cite{Hernandez} for quantum affine algebras.
\end{rem}
The proof of Theorem \ref{thm: web property} will be given in \S \ref{sec: proof of web}. As an important consequence, let us discuss the factorization of simple objects in $\CF$. Let $f \in \CR$. Write $f(z) = \frac{N(z)}{D(z)}$ where
\begin{displaymath}
N(z) = \prod_{i=1}^s (1 - z a_i),\quad D(z) = \prod_{i=1}^s (1 - z b_i) 
\end{displaymath} 
such that $a_i,b_i \in \BC$ and $a_i \neq b_j$ for $1 \leq i,j \leq s$ (possibly $a_i = 0$ or $b_i = 0$). Then
\begin{eqnarray*}
V(f) & \cong & \bigotimes_{i=1}^s V(\frac{1-za_i}{1-zb_i}).
\end{eqnarray*}
According to Theorem \ref{thm: web property}, these are factorizations of simple modules into prime simple modules. Here by a {\it prime} simple module we mean a simple module $S$ which can not be written as $S_1 \otimes S_2$ with $S_i$ being modules of dimension $> 1$ \cite[\S 2.2]{HL}.

We have seen in \S \ref{sec: simple} the explicit formulas for $V(1-za)$ and $V(\frac{1}{1-za})$. There still remains the third kind of prime simple modules, namely $V(\frac{1-za}{1-zb})$ for $a,b \in \BC^{\times}$ and $a \neq b$. Indeed, it is easy to check the following without using Theorem \ref{thm: web property} (b): the tensor product of highest $\ell$-weight vectors in $V(1-za) \otimes V(\frac{1}{1-zb})$ generates the unique simple sub-$Y_q(\Glie)$-module, which is two-dimensional and isomorphic to $V(\frac{1-za}{1-zb})$. Let $\rho_{a,b}$ be the corresponding representation of $Y_q(\Glie)$ on $\BV$. After some base change the generating matrix becomes
\begin{eqnarray*}
[\rho_{a,b}] &=& \begin{pmatrix}
\frac{1-za}{1-zb}E_{11} + \frac{q^{-1}-zaq}{1-zb} E_{22} & \frac{(q^{-1}-q)(b-a)}{1-zb} E_{12} \\
\frac{-z}{1-zb} E_{21} & E_{11} + \frac{q^{-1}-zbq}{1-zb} E_{22}
\end{pmatrix}.
\end{eqnarray*}
Remark that the matrix $[\rho_{a,b}]$ is well-defined even if $ab = 0$. So the $[\rho_{a,b}]$ with $a,b \in \BC$ and $a \neq b$ describe all the prime simple modules in $\CF$.

Fix $a,b \in \BC$ such that $a \neq b$.  Since $Y_q(\Glie)$ is a Hopf superalgebra, associated to the representation $\rho_{a,b}$ of $Y_q(\Glie)$ on $\BV$ is the dual representation $\rho_{a,b}^*$ on the dual space $\BV^*$ defined by: $\rho_{a,b}^* (x) := (\rho_{a,b}(\Sm x))^*\quad \textrm{for}\ x \in Y_q(\Glie)$. Let $(v_1^*,v_2^*)$ be the dual basis of $\BV^*$ with respect to $(v_1,v_2)$. Let $e_{ij} \in \End \BV^*$ be such that $e_{ij} v_k^* = \delta_{jk} v_i^*$. Then $E_{ij}^* = \varepsilon_{ij} e_{ji}$. Let us compute the generating matrix of $\rho_{a,b}^*$ with respect to the basis $(v_2^*,v_1^*)$. By definition, $[\rho_{a,b}^*]_{ij} = \rho_{a,b}(\Sm(s_{ij}(z)))^*$. In view of Equation \eqref{for: antipode for S},
\begin{displaymath}
[\rho_{a,b}(\Sm(s_{ij}(z)))]_{1 \leq i,j \leq 2}  = [\rho_{a,b}]^{-1}
\end{displaymath}
as $2\times 2$ matrices over the superalgebra $\End \BV$. A direct calculation indicates:
\begin{displaymath}
[\rho_{a,b}]^{-1} = \begin{pmatrix}
\frac{q^{-1}-zbq}{q^{-1}-zaq} E_{11} + \frac{1-zb}{q^{-1}-zaq} E_{22} & \frac{(q^{-1}-q)(a-b)}{q^{-1}-zaq} E_{12}  \\
\frac{z}{q^{-1}-zaq} E_{21} & E_{11} + \frac{1-za}{q^{-1}-zaq} E_{22}
\end{pmatrix}, \quad [\rho_{a,b}^*] = \frac{1-za}{q^{-1}-zaq} [\rho_{b,a}].
\end{displaymath}
In conclusion, as $Y_q(\Glie)$-modules:
\begin{displaymath}
V(\frac{1-za}{1-zb})^* \cong \BC_{\odd} \otimes \BC_{(q,q)} \otimes \BC_{\frac{1-za}{1-zaq^2}} \otimes V(\frac{1-zb}{1-za}). 
\end{displaymath}
\subsection{Proof of Theorem \ref{thm: web property}.} \label{sec: proof of web}
Note that (c) follows directly from (a) and (b). Also, in view of the factorization property and duality property discussed above, it is enough to prove (a) and (b) under the condition that the $f_i \in \CR$ are of the form $f_i(z) = \frac{1-za_i}{1-zb_i}$ where $a_i,b_i \in \BC$ and $a_i \neq b_i$. In this case, $P(f_i) \cap Z(f_j) = \emptyset$ if and only if $b_i \neq a_j$.  Moreover, the $V(f_i)$ are always two-dimensional, and $V(f_i)^* \cong V(f_i^{-1}) \otimes D_i$ for some one-dimensional module $D_i$. By definition of the dual modules, (b) is equivalent to the following statement:
\begin{itemize}
\item[(b1)]  $\bigotimes_{i=1}^s V(f_i)$ is of lowest $\ell$-weight if and only if $a_i \neq b_j$ for $1 \leq i < j \leq s$.
\end{itemize}
We only prove (a). For $1 \leq i \leq s$, let $u_i^+$ (resp. $u_i^-$) be a highest (resp. lowest) $\ell$-weight vector in $V(f_i)$. Then from the explicit realization of $V(f_i)$ we see that
\begin{displaymath}
|u_i^+| = \even,\quad s_{11}(z) u_i^+ = \frac{1-za_i}{1-zb_i} u_i^+,\quad s_{22}(z) u_i^+ = u_i^+,\quad s_{21}(z) u_i^+ = \frac{z\lambda_i}{1-zb_i} u_i^-
\end{displaymath}
where $\lambda_i \in \BC^{\times}$. We remark that Lemma \ref{lem: cyclicity Chari} still holds when replacing $U_q(\Gaff)$-modules by $Y_q(\Glie)$-modules. Indeed, if $W$ is a highest $\ell$-weight $Y_q(\Glie)$-module with $w$ a highest $\ell$-weight vector, then by Theorem \ref{thm: Ding-Frenkel} we see that $W$ is spanned by vectors of the form $X_{1,n_1}^- \cdots X_{1,n_r}^- w$ where $r \in \BZ_{\geq 0}$ and $n_i \in \BZ_{\geq 1}$ for $1 \leq i \leq r$.

Let $V: = \bigotimes_{i=1}^s V(f_i)$ and $u := \bigotimes_{i=1}^s u_i^+$. Via the action of the $s_{ii}^{(0)}$, $V$ is $\BQ$-graded:
\begin{displaymath}
(V)_{\lambda}:= \{ x \in V | s_{ii}^{(0)} x = q^{(\epsilon_i,\lambda)} x \quad \textrm{for}\ i = 1,2  \}.
\end{displaymath}
As $|u_i^+|_{\BQ} = 0, |u_i^-|_{\BQ} = -\alpha_1$, we see that $(V)_{-\alpha_1}$ is generated by the vectors 
\begin{displaymath}
w_j := (\bigotimes_{i=1}^{j-1} u_i^+) \otimes u_i^- \otimes (\bigotimes_{j=i+1}^s u_j^+) \quad \textrm{for}\ 1 \leq j \leq s.
\end{displaymath} 

On the other hand, set $V' := Y_q(\Glie) u$. As a highest $\ell$-weight module, $V'$ is $\BQ$-homogeneous. Moreover, from Theorem \ref{thm: Ding-Frenkel} we see that 
\begin{displaymath}
(V')_{-\alpha_1} = \sum_{n \in \BZ_{\geq 1}} \BC X_{1,n}^- u = \sum_{n \in \BZ_{\geq 1}} \BC s_{21}^{(n)} u.
\end{displaymath}
In other words, $(V')_{-\alpha_1}$ is generated by the coefficients of $s_{21}(z) u \in V[[z]]$.

Suppose first that $V = V'$ is of highest $\ell$-weight. Then the coefficients of 
\begin{eqnarray*}
s_{21}(z) u &=& \sum_{i=1}^s (\prod_{j=1}^{i-1} s_{22}(z) u_j^+) \otimes s_{21}(z) u_i^+ \otimes (\bigotimes_{j=i+1}^s s_{11}(z) u_j^+) =  \frac{z \sum_{i=1}^s \lambda_i g_i(z) w_i}{\prod_{l=1}^s (1-zb_l)} 
\end{eqnarray*}
generate an $s$-dimensional subspace. Here the $g_i(z) \in \BC[z]$ are defined by
\begin{displaymath}
g_i(z) = \prod_{j=1}^{i-1}(1-zb_j) \prod_{j=i+1}^s (1-za_j).
\end{displaymath}  
It follows that the polynomials $g_i(z) \in \BC[z]$ must be linearly independent. In view of Lemma \ref{lem: linear independence of polynomials} below, we must have $b_i \neq a_j$ for $1 \leq i < j \leq s$, as desired.

Next suppose that $b_i \neq a_j$ for $1 \leq i < j \leq s$. We show by induction on $s$ that $V$ is of highest $\ell$-weight. For $s = 1$ this is evident. Assume $s > 1$. Then we can assume furthermore that $\bigotimes_{i=2}^s V(f_i)$ is of highest $\ell$-weight. Now Lemma \ref{lem: cyclicity Chari} says that
\begin{displaymath}
V = Y_q(\Glie) w_1.
\end{displaymath}
Since $b_i \neq a_j$ for $1 \leq i < j \leq s$, the polynomials $g_i(z)$ are linearly independent (Lemma \ref{lem: linear independence of polynomials}). Hence the coefficients of $s_{21}(z) u$ generate an $s$-dimensional subspace. It follows that $w_1 \in V'$. Hence $V = V'$ is of highest $\ell$-weight. This completes the proof of Theorem \ref{thm: web property}. \hfill $\Box$
\begin{lem}   \label{lem: linear independence of polynomials}
Let $k \in \BZ_{>0}$. Let $a_i,a_i' \in \BC$ be given for $1 \leq i \leq k$. For $1 \leq j \leq k$, define
\begin{displaymath}
f_j(z) := (\prod_{i=1}^{j-1}(1 - z a_i)) (\prod_{i=j+1}^k (1 - z a_i')) \in \BC[z].
\end{displaymath}
Then the  $f_j(z)$ are linearly independent if and only if $a_i \neq a_j'$ for all $1 \leq i < j \leq k$.
\end{lem}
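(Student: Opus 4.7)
The plan is to prove both directions by exploiting the fact that each $f_j(z)$ is a product of $k-1$ linear factors, hence lies in the $k$-dimensional space $\BC[z]_{\leq k-1}$, so that any collection of $k$ polynomials trapped in a proper subspace is automatically dependent.

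For the necessity direction ($\Rightarrow$), I would suppose $a_i = a_j' =: c$ for some $1 \leq i < j \leq k$ and observe that $(1-zc)$ appears as one of the factors of $f_\ell(z)$ for every $\ell \in \{1,\ldots,k\}$: the disjunction ``$\ell > i$ or $\ell < j$'' is automatic once $i < j$, and in either case one of $(1-za_i), (1-za_j')$ sits in the defining product of $f_\ell$. If $c \neq 0$ this forces every $f_\ell$ to vanish at $z = 1/c$, placing the $k$ polynomials in a $(k-1)$-dimensional hyperplane of $\BC[z]_{\leq k-1}$. If $c = 0$ then the would-be factor $(1-zc) = 1$ is trivial and contributes nothing to the degree, so each $f_\ell$ is actually a product of at most $k-2$ non-constant linear factors; all $f_\ell$ then sit in $\BC[z]_{\leq k-2}$, again of dimension $k-1$. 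Either way, $k$ vectors in a $(k-1)$-dimensional space yield a non-trivial linear relation.

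For the sufficiency direction ($\Leftarrow$) I would induct on $k$, with $k = 1$ trivial since $f_1 = 1$. For the inductive step the key observation is that for every $\ell < k$ the factor $(1-za_k')$ appears in $f_\ell$, so one can write $f_\ell = (1-za_k') g_\ell$ with
\[
g_\ell(z) = \prod_{i=1}^{\ell-1}(1-za_i) \prod_{i=\ell+1}^{k-1}(1-za_i'),
\]
and the $g_\ell$ are precisely the polynomials of the lemma associated with the truncated data $(a_1,\ldots,a_{k-2})$ and $(a_2',\ldots,a_{k-1}')$, which inherits the hypothesis $a_i \neq a_j'$ for $1 \leq i < j \leq k-1$. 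Given a relation $\sum_{\ell=1}^k c_\ell f_\ell = 0$, I would first extract $c_k = 0$: when $a_k' \neq 0$, evaluating at $z = 1/a_k'$ kills the terms with $\ell < k$ while leaving $f_k(1/a_k') = \prod_{i=1}^{k-1}(1 - a_i/a_k') \neq 0$ by the hypothesis $a_i \neq a_k'$; when $a_k' = 0$, the hypothesis forces $a_i \neq 0$ for $i < k$, so $f_k$ has degree exactly $k-1$ while each $f_\ell = g_\ell$ ($\ell < k$) has degree $\leq k-2$, and a degree count delivers $c_k = 0$. With $c_k = 0$ in hand, dividing through by $(1-za_k')$ (trivially when $a_k' = 0$) produces $\sum_{\ell<k} c_\ell g_\ell = 0$, and the inductive hypothesis finishes the argument.

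The main point of caution, as previewed in the necessity argument, is the degenerate situation where the parameter used in the evaluation happens to vanish ($c = 0$ or $a_k' = 0$). The standard root-evaluation trick breaks down there, but in each instance the vanishing parameter forces one of the linear factors to collapse to $1$, dropping the degree by one, and a quick dimension count in $\BC[z]_{\leq k-2}$ salvages the argument. Beyond this, the proof is a clean induction with no further subtlety.
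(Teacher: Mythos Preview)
Your proof is correct, and it takes a genuinely different route from the paper. The paper computes the wedge product $f_1 \wedge \cdots \wedge f_k$ explicitly: introducing the auxiliary polynomials $f_j^{(s)}(z) = \prod_{i<j}(1-za_i)\prod_{i>j+s}(1-za_i')$ and using the telescoping identity $f_i^{(s)} - f_{i+1}^{(s)} = (a_i - a_{i+s+1}')\, z\, f_i^{(s+1)}$, it row-reduces the wedge product step by step and obtains the closed formula
\[
f_1 \wedge \cdots \wedge f_k \;=\; \prod_{1 \le i < j \le k}(a_j' - a_i)\,\bigl(1 \wedge z \wedge \cdots \wedge z^{k-1}\bigr),
\]
so both directions fall out at once from the non-vanishing of this product.

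Your argument instead treats the two implications separately: for necessity you observe that a coincidence $a_i = a_j'$ forces a common factor $(1-zc)$ in every $f_\ell$ (with a degree drop when $c=0$), and for sufficiency you peel off $f_k$ by evaluating at $1/a_k'$ (or by a degree count when $a_k'=0$) and then invoke induction on the quotients $g_\ell$. This is more elementary and avoids any determinant manipulation, at the price of the $c=0$/$a_k'=0$ case split. The paper's approach is uniform in the parameters and yields the extra information of the exact determinant, which is not needed for the lemma as stated but is pleasant to have.

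One small cosmetic point: when you describe the truncated data as $(a_1,\ldots,a_{k-2})$ and $(a_2',\ldots,a_{k-1}')$, it is cleaner to say that the $g_\ell$ are the polynomials of the lemma for the length-$(k-1)$ data $b_i = a_i$, $b_i' = a_i'$ ($1 \le i \le k-1$), noting that $b_{k-1}$ and $b_1'$ are dummy; the inherited hypothesis $a_i \neq a_j'$ for $1 \le i < j \le k-1$ is then transparent.
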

\begin{proof}
The $k$ polynomials $f_j(z)$ are of degree $\leq k-1$. Introduce 
\begin{displaymath}
f_1(z) \wedge f_2(z) \wedge \cdots \wedge f_k(z) = \Delta (1 \wedge z \wedge \cdots \wedge z^{k-1}) \in \wedge^k \BC[z].
\end{displaymath}
Then the $f_j(z)$ are linearly independent if and only if $\Delta \neq 0$. For $j+s \leq k$, take
\begin{displaymath}
f_j^{(s)}(z) = (\prod_{i=1}^{j-1}(1-za_i)) (\prod_{i=j+s+1}^k (1 - z a_i')).
\end{displaymath}
Then $f_j^{(0)}(z) = f_j(z)$ and $f_i^{(s)}(z) - f_{i+1}^{(s)}(z) = (a_i - a_{i+s+1}') z f_i^{(s+1)}(z)$ for $i+s+1 \leq k$. Take $\omega = 1 \wedge z \wedge \cdots \wedge z^{k-1}$. We have
\begin{eqnarray*}
\Delta \omega &=& \bigwedge_{i=1}^k f_i^{(0)}(z) = (\bigwedge_{i=1}^{k-1} f_i^{(0)}(z)-f_{i+1}^{(0)}(z)) \wedge f_k(z) = (\bigwedge_{i=1}^{k-1} (a_i - a_{i+1}') z f_i^{(1)}(z)) \wedge f_k^{(0)}(z)\\
 &=& (\prod_{i=1}^{k-1}(a_{i+1}'-a_i)) 1 \wedge (\bigwedge_{i=1}^{k-2} z(f_i^{(1)}(z)-f_{i+1}^{(1)}(z))) \wedge z f_{k-1}^{(1)}(z)  \\
 &=& (\prod_{i=1}^{k-1}(a_{i+1}'-a_{i}))(\prod_{i=1}^{k-2} (a_{i+2}' - a_i)) 1 \wedge z \wedge (\bigwedge_{i=1}^{k-2} z^2 f_i^{(2)}(z)) \\
 &=& \cdots = \prod_{1 \leq i < j \leq k} (a_j' - a_i) \omega. 
\end{eqnarray*}
Clearly $\Delta \neq 0$ if and only if $a_i \neq a_j'$ for $1 \leq i < j \leq k$. 
\end{proof}
\section{Tensor products of Kirillov-Reshetikhin modules}  \label{sec: proof}
In this section, based on the representation theory of $Y_q(\mathfrak{gl}(1,1))$ and the lemma of Chari, we prove a cyclicity result on tensor products of Kirillov-Reshetikhin modules over $U_q(\Gaff)$. 

Thanks to Proposition \ref{prop: evaluation morphism}, there is a highest weight representation theory for the quantum superalgebra $U_q(\Glie)$. In particular, for $\lambda \in \BP$, we have simple $U_q(\Glie)$-module $(DF^{-1})^* L(\lambda)$ which will be written as $L(\lambda)$. It is the simple $U_q(\Glie)$-module generated by a vector $v_{\lambda}$: 
\begin{displaymath}
|v_{\lambda}| = |\lambda|,\quad s_{ij} v_{\lambda} = 0, \quad s_{kk} v_{\lambda} = q^{(\epsilon_k,\lambda)} v_{\lambda} \quad (i,j,k \in I, i < j).
\end{displaymath}
For $a \in \BC^{\times}$, define the evaluation morphism $\ev_a := \ev \circ \Phi_a: U_q(\Gaff) \longrightarrow U_q(\Glie)$, here $\ev$ and $\Phi_a$ are given by Proposition \ref{prop: evaluation morphism} and by Formula \eqref{for: automorphisms of Z-graded superalgebras} respectively. We can pull back $U_q(\Glie)$-modules $V$ to get $U_q(\Gaff)$-modules $\ev_a^* V$.
When there is no confusion, we simply write $v = \ev_a^* v$ for $v \in V$. For example, take $\lambda \in \BP$. Consider $\ev_a^* L(\lambda)$. Let $v_{\lambda}$ be a highest weight vector for the $U_q(\Glie)$-module $L(\lambda)$, then $v_{\lambda}$ is a highest $\ell$-weight vector 
\begin{displaymath}
|v_{\lambda}| = |\lambda|,\quad s_{ii}(z) v_{\lambda} = (q^{(\lambda,\epsilon_i)} - z a q^{-(\lambda,\epsilon_i)}) v_{\lambda},\quad t_{ii}(z) v_{\lambda} = (q^{-(\lambda,\epsilon_i)} - z^{-1}a^{-1}q^{(\lambda,\epsilon_i)}) v_{\lambda}.
\end{displaymath}
\begin{defi}   \label{def: Kirillov-Reshetikhin modules}
The $U_q(\Gaff)$-modules $\ev_a^*L(k\varpi_r) \otimes \BC_{|k\varpi_r|}$ for $a \in \BC^{\times},r \in I_0, k \in \BZ_{\geq 0}$, are called Kirillov-Reshetikhin modules, denoted by $W_{k,a}^{(r)}$.
\end{defi}
For $r \in I_0$, set $r^{\vee} = r$ if $r \leq M$ and $r^{\vee} = M+N-r$ otherwise. 
\begin{thm}   \label{thm: cyclicity of tensor product of KR modules}
Let $r \in I_0, k \in \BZ_{>0}$ and $a_1,a_2,\cdots,a_k \in \BC^{\times}$. Assume that 
\begin{displaymath}
\frac{a_i}{a_j} \notin \{ q_r^{-2t} \ |\ t = 1,2,\cdots,r^{\vee} \} \quad \textrm{for}\ 1 \leq i < j \leq k.
\end{displaymath}
Then the $U_q(\Gaff)$-module $\bigotimes_{j=1}^k W_{1,a_j}^{(r)}$ is of highest $\ell$-weight. 
\end{thm}
Note that we make the assumption $M,N >0$ throughout the paper. When $N = 0$, the above theorem is a special case of cyclicity results in \cite{AK,Chari,Kashiwara,VV}.

The proof uses reduction arguments as in \cite{Chari}. Let $A,B$ be Hopf superalgebras and let $h: A \longrightarrow B$ be a morphism of superalgebras which does NOT necessarily respect the coproducts. Let $V$ be a $B$-module. Suppose that $W$ is a sub-vector-superspace of $V$ stable by $h(A)$. Then the action of $h(A)$ endows $W$ with an $A$-module structure, denoted by $h^{\bullet} W$. (It is a sub-$A$-module of $h^*(V)$.) Now let $V_1,V_2$ be two $B$-modules and let $W_1,W_2$ be subspaces of $V_1,V_2$ such that $V_1,V_2,V_1 \otimes V_2$ are all stable by $h(A)$. Consider the two $A$-modules $h^{\bullet} W_1 \otimes h^{\bullet} W_2$ and $h^{\bullet}(W_1 \otimes W_2)$ with the same underlying vector superspace $W_1 \otimes W_2$. By definition, they are not necessarily isomorphic $A$-modules as the former uses coproduct of $A$ and the later $B$. In our case, the identity map will be an isomorphism of these two $A$-modules due to specific choices of the $V_i,W_i,A,B$.
\begin{proof}
It is enough to consider the case $1 \leq r \leq M$. Indeed, for $M + 1 \leq r < M+N$, Propositions \ref{prop: from gl(N,M) to gl(M,N)}-\ref{prop: evaluation morphism} indicate that $f^* W_{k,a}^{(r)} \cong W_{k,a}'^{(N+M-r)}$ is a Kirillov-Reshetikhin module over $U_q(\widehat{\mathfrak{gl}(N,M)})$, corresponding to the fundamental weight $\varpi_{N+M-r}$. (This is the reason for defining $r^{\vee}$.) From now on, assume $1 \leq r \leq M$.  We prove the theorem by induction on $r$. Let $U:=Y_q(\mathfrak{gl}(1,1))$ as in \S\ref{sec: Yangian} and $h: U \longrightarrow U_q(\Gaff)$ be the superalgebra morphism 
\begin{displaymath}
s_{11}(z) \mapsto s_{11}(z),\quad s_{12}(z) \mapsto s_{1,M+N}(z),\quad s_{21}(z) \mapsto s_{M+N,1}(z),\quad s_{22}(z) \mapsto s_{M+N,M+N}(z).
\end{displaymath}
Fix $a \in \BC^{\times}$. The $U_q(\Gaff)$-module $W_{1,a}^{(r)} = \ev_a^* L(\varpi_r)$ is $\BP$-graded under the action of the $s_{ii}^{(0)}$. By Theorem \ref{thm: BKK Schur-Weyl duality},  $(W_{1,a}^{(r)})_{\lambda}$ is non-zero if and only if 
\begin{displaymath}
\lambda = \epsilon_{i_1} + \epsilon_{i_2} + \cdots + \epsilon_{i_r}
\end{displaymath} 
where $1 \leq i_1 \leq i_2 \leq \cdots \leq i_r \leq M+N$ and $i_s < i_{s+1}$ if $i_s \leq M$. Moreover, for such $\lambda$, $(W_{1,a}^{(r)})_{\lambda}$ is always one-dimensional, and for $x \in (W_{1,a}^{(r)})_{\lambda}$,
\begin{displaymath}
s_{ii}(z) x = (q^{(\lambda,\epsilon_i)} - z a q^{-(\lambda,\epsilon_i)}) x,\quad t_{ii}(z) x = (q^{-(\lambda,\epsilon_i)} - z^{-1}a^{-1}q^{(\lambda,\epsilon_i)}) x.
\end{displaymath}
Let $v_a^+$ (resp. $v_a^-$) be a highest (resp. lowest) $\ell$-weight vector in $W_{1,a}^{(r)}$. Define
\begin{displaymath}
u_a^+ = s_{1,M+N}^{(0)} v_a^-,\quad u_a^- = t_{M+N,1}^{(0)} v_a^+.
\end{displaymath}
By Theorem \ref{thm: BKK Schur-Weyl duality}, $|v_a^-|_{\BP} = r \epsilon_{M+N}$, together with the following Chevalley relation 
\begin{displaymath}
s_{1,M+N}^{(0)} t_{M+N,1}^{(0)} + t_{M+N,1}^{(0)} s_{1,M+N}^{(0)} = (q-q^{-1}) (t_{11}^{(0)} s_{M+N,M+N}^{(0)} - s_{11}^{(0)} t_{M+N,M+N}^{(0)})
\end{displaymath}
we obtain $u_a^{\pm} \neq 0$ and 
\begin{displaymath}
 |u_a^+|_{\BP} = \epsilon_1 + (r-1)\epsilon_{M+N},\quad |u_a^-|_{\BP} =\epsilon_2 + \cdots + \epsilon_r + \epsilon_{M+N}.
\end{displaymath}
Introduce vector subspaces $K^+(a) = \BC v_a^+ + \BC u_a^-,\ K^-(a) = \BC v_a^- + \BC u_a^+ \subseteq W_{1,a}^{(r)}$. The $\BP$-grading on $W_{1,a}^{(r)}$ says that the subspaces $K^{\pm}(a)$ are both stable by $h(U)$. Let $h^{\bullet} K^{\pm}(a)$ be the $U$-modules thus obtained. The following observations are crucial:
\begin{itemize}
\item[(1)] if $s_{li}(z) K^+(a) \neq 0$ and $(i \in \{1,M+N\}, 1 < l < M+N)$, then $r < l < M+N$; 
\item[(2)] if $i \neq l$ and $r < l < M+N$, then $s_{il}(z) K^{\pm}(a) = 0$;
\item[(3)] $\bigotimes_{j=2}^k K^+(a_j)$ is a subspace of $\bigotimes_{j=2}^k W_{1,a_j}^{(r)}$ stable by $h(U)$;
\item[(4)] $K^-(a_1) \otimes (\bigotimes_{j=2}^k K^+(a_j))$ is a subspace of $\bigotimes_{j=1}^k W_{1,a_j}^{(r)}$ stable by $h(U)$;
\item[(5)] the identity map is an isomorphism of $U$-modules
\begin{displaymath}
h^{\bullet} (K^-(a_1) \otimes (\bigotimes_{j=2}^k K^+(a_j))) \cong h^{\bullet} K^-(a_1) \otimes (\bigotimes_{j=2}^k h^{\bullet}K^+(a_j)).
\end{displaymath}
\end{itemize} 
Here, (1)-(2) follow directly from the weight gradings. (3) and (4) can be deduced from (1)-(2) by induction on $k$. By (1)-(2) and Equation \eqref{for: coproduct for quantum affine superalgebra S}, the $(h^{\otimes k})\Delta_k(x) - \Delta_k(h(x)) \in U_q(\Gaff)^{\otimes k}$  with $x = s_{ij}^{(n)} \in U$ acts as zero on the subspace in (4). Here the first $\Delta_k$ is the $k$-fold coproduct for $U = Y_q(\mathfrak{gl}(1,1))$ and the second for $U_q(\Gaff)$; the $k$-fold coproducts $\Delta_k: A \longrightarrow A^{\otimes k}$ with $k \geq 2$ of a Hopf superalgebra $(A,\Delta,\varepsilon)$ are defined inductively $\Delta_k = (\Delta \otimes \Id_A^{\otimes k-2})\Delta_{k-1}$ and $\Delta_2 = \Delta$. This proves (5).  
Now, as $U$-modules, by using notations in \S \ref{sec: simple} 
\begin{align*}
& h^{\bullet} K^-(a) \cong \BC_{(r-1)\odd} \otimes \BC_{(q,q^{1-r})} \otimes \BC_{1-zaq^{2r-2}} \otimes V(\frac{1-zaq^{-2}}{1-zaq^{2r-2}}),   \\
& h^{\bullet} K^+(a) \cong   \BC_{(q,1)} \otimes \BC_{\frac{1}{1-za}} \otimes V(\frac{1 - zaq^{-2}}{1-za}).
\end{align*}
Since $a_1 \neq a_i q^{-2r}$ and $a_j \neq a_l q^{-2}$ for $1 < i \leq k$ and for $1<j<l\leq k$, by Theorem \ref{thm: web property}, the $U$-modules in (5) are of highest $\ell$-weight. In particular,
\begin{displaymath}
(6): v_{a_1}^- \otimes (\bigotimes_{i=2}^k v_{a_i}^+) \in h(U)(u_{a_1}^+ \otimes (\bigotimes_{i=2}^k v_{a_i}^+)) \subseteq U_q(\Gaff) (u_{a_1}^+ \otimes (\bigotimes_{i=2}^k v_{a_i}^+)).
\end{displaymath}

Now let us prove that $\otimes_{i=1}^k W_{1,a_i}^{(r)}$ is of highest $\ell$-weight by a second induction on $k$, as in the proof of Theorem \ref{thm: web property}. Assume that $\otimes_{i=2}^k W_{1,a_i}^{(r)}$ is of highest $\ell$-weight. By Lemma \ref{lem: cyclicity Chari} and by (6), it is enough to prove the following assertion:
\begin{displaymath}
(7):\quad u_{a_1}^+ \otimes (\bigotimes_{i=2}^k v_{a_i}^+) \in U_q(\Gaff) (\bigotimes_{i=1}^k v_{a_i}^+).
\end{displaymath}
If $r = 1$, then $u_{a_1}^+ \in \BC^{\times} v_{a_1}^+$ and (7) is trivial. If $r > 1$, let
$U':= U_q(\widehat{\mathfrak{gl}(M-1,N)})$ and let $g: U' \longrightarrow U_q(\Gaff)$ be the superalgebra morphism
\begin{displaymath}
s_{ij}(z) \mapsto s_{i+1,j+1}(z),\quad t_{ij}(z) \mapsto t_{i+1,j+1}(z).
\end{displaymath}
For $b \in \BC^{\times}$, let $K(b) = g(U') v_{b}^+ \subseteq W_{1,b}^{(r)}$ so that it is stable by $g(U')$. The weight grading on $W_{1,b}^{(r)}$ and Theorem \ref{thm: BKK Schur-Weyl duality} applied to $\mathfrak{gl}(M-1,N)$ and Dynkin node $r-1$ show that the induced $U'$-module $g^{\bullet} K(b)$ is isomorphic to the Kirillov-Reshetikhin module $W_{1,b}^{'(r-1)}$ over $U'$ and $u_b^+ \in K(b)$. The same arguments as (3)-(5) above indicate that: $\bigotimes_{j=1}^k K(a_j)$ is a subspace of $\bigotimes_{j=1}^k W_{1,a_j}^{(r)}$ stable by $g(U')$; as $U'$-modules
\begin{displaymath}
g^{\bullet} (\bigotimes_{j=1}^k K(a_j)) \cong \bigotimes_{j=1}^k g^{\bullet} K(a_j) \cong \bigotimes_{j=1}^k W_{1,a_j}^{'(r-1)}. 
\end{displaymath}
The first induction hypothesis on $r$ shows that the RHS above is of highest $\ell$-weight, from which follows (7). 
This concludes the proof of Theorem \ref{thm: cyclicity of tensor product of KR modules}. 
\end{proof}

Let us end this section with a detailed discussion of the modules $W_{1,a}^{(1)}$.  
There is a representation $\rho_{(1)}$ of $U_q(\Glie)$ on $\BV$: 
\begin{eqnarray*}
&&(\rho_{(1)} \otimes \Id_{\End \BV}) (T) = (\Id_{\End \BV} \otimes \tau) (R^{-1}), (\rho_{(1)} \otimes \Id_{\End \BV})(S) = (\Id_{\End \BV} \otimes \tau) ((R')^{-1}), \\
&& \rho_{(1)}(s_{ii}) = q_i E_{ii} + \sum_{j \neq i} E_{jj} = \rho_{(1)} (t_{ii}^{-1}) \quad (\textrm{for}\ i \in I),   \\
&& \rho_{(1)}(s_{ij}) = (q_i - q_i^{-1}) E_{ij},\quad \rho_{(1)}(t_{ji}) = (q_i^{-1}-q_i) E_{ji}\quad (\textrm{for}\ 1 \leq i < j \leq M+N).
\end{eqnarray*}
By Proposition \ref{prop: evaluation morphism} and Example \ref{example: natural representation quantum superalgebra}, $\rho_{(0)} = \rho_{(1)} \circ DF$ and $\BV \cong L(\epsilon_1)$ as  $U_q(\Glie)$-modules. For $a \in \BC^{\times}$, the representation $\rho_a := \rho_{(1)} \circ \ev_a$ corresponds to the $U_q(\Gaff)$-module $W_{1,a}^{(1)}$.

The following lemma says that Perk-Schultz $R$-matrices can be interpreted as intertwining operators, from which comes naturally the Yang-Baxter equation in \S \ref{sec: Perk-Schultz matrix}.
\begin{lem}
Let $a, b \in \BC^{\times}$. Then $c_{\BV,\BV} \circ R(z,w)|_{(z,w) = (a,b)}: W_{1,a}^{(1)} \otimes W_{1,b}^{(1)} \longrightarrow W_{1,b}^{(1)} \otimes W_{1,a}^{(1)}$ is a morphism of $U_q(\Gaff)$-modules. 
\end{lem} 
The proof is direct, using symmetry properties of the Perk-Schultz $R$-matrix. 

\begin{prop}   \label{prop: tensor product of natural representations}
Let $k \in \BZ_{>0}$ and $a_i \in \BC^{\times}$ for $1 \leq i \leq k$. The $U_q(\Gaff)$-module $\otimes_{i=1}^k W_{1,a_i}^{(1)}$ is of highest $\ell$-weight if and only if $a_i \neq a_j q^{-2}$ for $1 \leq i < j \leq k$. It is of lowest $\ell$-weight if and only if $a_i \neq a_j q^{2}$ for $1 \leq i < j \leq k$.  
\end{prop}
The proof of this proposition is again quite similar to that of Theorem \ref{thm: web property}. We remark that the method of proof of Theorem \ref{thm: cyclicity of tensor product of KR modules} however does not apply directly to obtain similar sufficient conditions for a tensor product of the $W_{1,a}^{(r)}$ to be of lowest $\ell$-weight.

\appendix
\section{Proof of the coproduct formulas} \label{sec: app}
Proposition \ref{prop: Drinfeld coproduct estimation} is proved in essentially the same way as \cite[Prop.5.4]{Z}. However, it should be noted that the coproduct estimations in {\it loc. cit} are not enough in the present paper as seen from the proof of Chari's Lemma \ref{lem: cyclicity Chari}.

Without loss of generality, we shall prove the coproduct formulas for $K_{j,s}^+, X_{i,n}^{\pm}$ for $i \in I_0, j \in I$ and $s,n \in \BZ_{\geq 0}$. Proof of other cases is parallel. For simplicity, let $U:= U_q(\Gaff)$. In the following, for  two vectors $x,y$ in a vector space, we write $x \doteq y$ if $x \in \BC^{\times }y$.

As a first step, we express the $h_{i,1}$ as quantum brackets. 
Let $x \in U_{\alpha}, y \in U_{\beta}$ be $\BQ$-homogeneous. Define 
\begin{displaymath}
\lfloor x, y \rfloor := x y - (-1)^{|\alpha||\beta|} q^{(\alpha,\beta)} y x.
\end{displaymath}
Given $x_s \in U_{\beta_s}$ for $1 \leq s \leq r$, define {\it iterated quantum brackets}
\begin{displaymath}
\lfloor x_1,x_2,\cdots,x_r \rfloor_L := \lfloor \lfloor x_1,x_2,\cdots,x_{r-1} \rfloor_L, x_r \rfloor,\quad \lfloor x_1,x_2,\cdots,x_r \rfloor_R := \lfloor x_1,\lfloor x_2,\cdots,x_{r-1}\rfloor_R \rfloor.
\end{displaymath}
\begin{lem}   \label{lem: zero node}
 $\lfloor X_{1,1}^-, X_{2,0}^-, X_{3,0}^-, \cdots, X_{M+N-1,0}^- \rfloor_L \doteq s_{M+N,1}^{(1)} (s_{11}^{(0)})^{-1}$. 
\end{lem}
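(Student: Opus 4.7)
My plan is to induct on $2 \leq j \leq M+N$ that
$$A_j := \lfloor X_{1,1}^-, X_{2,0}^-, \ldots, X_{j-1,0}^- \rfloor_L \;\doteq\; s^{(1)}_{j,1}\,(s^{(0)}_{11})^{-1}.$$
The base case $j=2$ falls out of the Gauss decomposition of \S \ref{sec: Drinfeld}: the coefficient of $z^1$ in $f^+_{21}(z) = s_{21}(z)\, s_{11}(z)^{-1}$ is $s^{(1)}_{21}(s^{(0)}_{11})^{-1}$ while $f^-_{21}(z)$ contributes nothing at positive powers of $z$, so $X_{1,1}^- = - s^{(1)}_{21}(s^{(0)}_{11})^{-1}$. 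The parallel argument on the constant term of $t_{i+1,i}(z)\, t_{ii}(z)^{-1}$ gives $X_{i,0}^- = t^{(0)}_{i+1,i}(t^{(0)}_{ii})^{-1}$ for $i \geq 2$; I will use these explicit descriptions throughout.

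The induction step hinges on a mixed commutation identity. Extracting the coefficient of $z$ on both sides of the RTS-relation \eqref{rel: RTS = STR} gives $R\, T^{(0)}_{12}\, S_{13}(w) = S_{13}(w)\, T^{(0)}_{12}\, R$ with $R = R(1,0)$, and then the coefficient of $w$ gives the RTT-type identity $R\, T^{(0)}_{12}\, S^{(1)}_{13} = S^{(1)}_{13}\, T^{(0)}_{12}\, R$. I then read off the matrix coefficient $v_j \otimes v_1 \mapsto v_{j+1} \otimes v_j$ on each side; the ice rule (PS3) and the explicit formula \eqref{for: Perk-Schultz matrix coefficients} cut each sum down to only the two configurations $(p,r) \in \{(j+1,j),\,(j,j+1)\}$, producing
\begin{equation*}
s^{(1)}_{j,1}\, t^{(0)}_{j+1,j} \;-\; (-1)^{|j|(|j|+|j+1|)}\, t^{(0)}_{j+1,j}\, s^{(1)}_{j,1} \;=\; (q_j - q_j^{-1})\, t^{(0)}_{jj}\, s^{(1)}_{j+1,1}.
\end{equation*}
This is the super-commutator analogue of the finite-type identity $[t_{ji}, t_{kj}] = (q_j - q_j^{-1}) t_{jj} t_{ki}$ displayed immediately before \S \ref{sec: quantum double}.

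For the induction step proper, set $a = s^{(1)}_{j,1}$, $b = (s^{(0)}_{11})^{-1}$, $c = t^{(0)}_{j+1,j}$, $d = (t^{(0)}_{jj})^{-1}$; the weights $|ab|_{\BQ} = \epsilon_j - \epsilon_1$ and $|cd|_{\BQ} = -\alpha_j$ make the quantum-bracket super-sign equal to $(-1)^{|j|(|j|+|j+1|)}$ and the $q$-factor equal to $q^{-d_j}$. The Cartan relations \eqref{rel: Cartan relations for weight grading} give $bc = cb$, $bd = db$, $[t^{(0)}_{jj}, s^{(1)}_{j+1,1}] = 0$, and most importantly $d\, a = q^{d_j} a\, d$; the last identity exactly cancels the quantum-bracket factor $q^{-d_j}$ when one rewrites
$$\lfloor ab,\, cd\rfloor \;=\; abcd - (-1)^{|j|(|j|+|j+1|)}\, q^{-d_j}\, cdab \;=\; \bigl(ac - (-1)^{|j|(|j|+|j+1|)}\, ca\bigr)\, bd.$$
Applying the mixed commutation identity and then using $t^{(0)}_{jj}\, d = 1$ together with the commutations listed, this collapses to $(q_j - q_j^{-1})\, s^{(1)}_{j+1,1}\, b$, which is $\doteq s^{(1)}_{j+1,1}(s^{(0)}_{11})^{-1}$ since $q$ is not a root of unity.

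The main obstacle will be the mixed commutation identity. The delicate points are (i) the super-sign $(-1)^{|A_3|(|B_1|+|B_2|) + |A_2||B_1|}$ arising when multiplying three-tensor-factor operators such as $T^{(0)}_{12}$ and $S^{(1)}_{13}$, which must be tracked carefully so that it matches exactly the super-sign appearing in the quantum bracket; and (ii) verifying via the ice rule that the chosen matrix coefficient receives contributions from exactly the two configurations above, with all others killed by $R$-matrix selection. Once these points are settled, no special treatment is required at the odd simple root $\alpha_M$, since $q_j - q_j^{-1}$ is nonzero for every $j \in I_0$.
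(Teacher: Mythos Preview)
Your proof is correct and follows essentially the same route as the paper's own argument: both derive the mixed commutation identity $[s_{j,1}^{(1)}, t_{j+1,j}^{(0)}] = (q_j - q_j^{-1}) t_{jj}^{(0)} s_{j+1,1}^{(1)}$ by extracting the appropriate coefficient from Relation~\eqref{rel: RTS = STR} (the paper does this for general $i<j<k$ and then specializes, whereas you specialize to $i=1$, $k=j+1$ from the outset), and both apply it inductively against $X_{i,0}^- = t_{i+1,i}^{(0)}(t_{ii}^{(0)})^{-1}$. Your write-up is in fact more explicit than the paper's about the bookkeeping in the induction step---the super-sign, the $q^{-d_j}$ factor in the quantum bracket, and the Cartan commutations that allow the $(t_{jj}^{(0)})^{-1}$ and $(s_{11}^{(0)})^{-1}$ factors to slide past---where the paper simply says ``by repeatedly applying Equation~\eqref{equ: relations in qas} we find the desired quantum bracket.''
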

\begin{proof}
Fix $i,j,k \in I$ such that $i < j < k$. By taking the matrix coefficients of $v_j \otimes v_i \mapsto v_{k} \otimes v_j$ in Equation \eqref{rel: RTS = STR}
we obtain
\begin{eqnarray*}
&\ & (-1)^{|k|(|i|+|j|)} (z-w) t_{kj}(z) s_{ji}(w) + z(q_j-q_j^{-1}) (-1)^{|j|(|k|+|i|)} t_{jj}(z) s_{ki}(w) \\
& = & (-1)^{|j|(|i|+|j|)} (z-w) s_{ji}(w)t_{kj}(z) + w(q_j-q_j^{-1})(-1)^{|i|(|i|+|j|)} s_{jj}(w) t_{ki}(z).
\end{eqnarray*}
Next by comparing the coefficients of $zw$ we get 
\begin{equation}   \label{equ: relations in qas}
[s_{ji}^{(1)}, t_{kj}^{(0)}]  = (q_j-q_j^{-1}) t_{jj}^{(0)} s_{ki}^{(1)}.
\end{equation}
Note that for $1 \leq j \leq M+N-1$  we have
\begin{displaymath}
X_{1,1}^- = -s_{21}^{(1)}(s_{11}^{(0)})^{-1}, \quad X_{j,0}^- = t_{j+1,j}^{(0)} (t_{jj}^{(0)})^{-1}.
\end{displaymath}
By repeatedly applying Equation \eqref{equ: relations in qas} we find the desired quantum bracket.
\end{proof} 
Let us introduce the $H_{i,s}$ for $i \in I$ and $s \in \BZ_{>0}$ by the following functional equations:
\begin{displaymath}
K_i^+(z) = s_{ii}^{(0)} \exp ((q_i-q_i^{-1}) \sum_{s \in \BZ_{>0}} H_{i,s} z^s) \in U[[z]].
\end{displaymath}
From Theorem \ref{thm: Ding-Frenkel} we deduce that for $i \in I, j \in I_0$
\begin{align*}
& [H_{i,s}, X_{j,n}^{\pm}] = 0 \quad \textrm{if}\ i \neq j, j+1,  \\
& [H_{i,s}, X_{i,n}^{\pm}] = \pm q_i^s \frac{[s]}{s}  X_{i,n+s}^{\pm}, \quad   [H_{i,s}, X_{i-1,n}^{\pm}] = \mp q_i^{-s} \frac{[s]}{s} X_{i,n+s}^{\pm}.
\end{align*}
Set $h_{i,s} := d_i H_{i,s} - d_{i+1} H_{i+1,s}$ for $i \in I_0, s \in \BZ_{>0}$. One can find  $c_{i} \in \BC^{\times}$ for $i \in I_0$ such that
\begin{displaymath}
[h_{i,1}, X_{i+1,n}^{\pm}] = \pm c_{i+1} X_{i+1,n+1}^{\pm},\quad [H_{1,1}, X_{1,n}^{\pm}] = \pm c_1 X_{1,n+1}^{\pm}
\end{displaymath}

Let us introduce $L_i := s_{ii}^{(0)} (s_{i+1,i+1}^{(0)})^{-1}$ for $i \in I_0$. Set
\begin{displaymath}
E_0 := s_{M+N,1}^{(1)} (s_{M+N,M+N}^{(0)})^{-1}, \quad E_i := X_{i,0}^+,\quad L_0 := (L_1L_2\cdots L_{M+N-1})^{-1}.
\end{displaymath}
The following lemma is proved in the same way as in \cite[Appendix 2]{Z}.
\begin{lem}   \label{lem: quantum brackets for degree 1 cartan elements}
$h_{i,1} \doteq \lfloor E_i, E_{i-1}, E_{i-2}, \cdots, E_1, E_{i+1}, E_{i+2}, \cdots, E_{M+N-1}, E_0 \rfloor_R$ for $i \in I_0$.
\end{lem}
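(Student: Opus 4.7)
The plan is to reduce the nested quantum bracket to an explicit RTT expression step-by-step, then identify the resulting $\BZ$-degree one, weight zero element with $h_{i,1}$ via the Gauss decomposition. Write $A_i := \lfloor E_{i+1}, E_{i+2}, \ldots, E_{M+N-1}, E_0 \rfloor_R$ for the right-nested ``tail'' (with $A_{M+N-1} := E_0$). A quick weight count using \S\ref{sec: weight grading} and Equation \eqref{equ: Z grading} gives $|A_i|_{\BQ} = \epsilon_{i+1} - \epsilon_1$ and $|A_i|_{\BZ} = 1$, and likewise the full bracket has weight zero and $\BZ$-degree one, matching $h_{i,1}$.

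First I would show by descending induction on $j$ from $M+N-1$ down to $i+1$ that
\begin{equation*}
\lfloor E_{j}, E_{j+1}, \ldots, E_{M+N-1}, E_0 \rfloor_R \doteq s_{j,1}^{(1)} \cdot C_j
\end{equation*}
for some $C_j$ a product of inverses of diagonal zero-th RTT generators. The base case $j = M+N-1$ amounts to expanding $E_{M+N-1} = X_{M+N-1,0}^+ \doteq (s_{M+N-1,M+N-1}^{(0)})^{-1} s_{M+N-1,M+N}^{(0)}$ via the Gauss decomposition from \S\ref{sec: Drinfeld}, and then using the RTT relation \eqref{rel: RSS = SSR} (specialized to the matrix coefficient picking out $s_{M+N-1,M+N}^{(0)} \cdot s_{M+N,1}^{(1)}$ versus $s_{M+N,1}^{(1)} \cdot s_{M+N-1,M+N}^{(0)}$) to produce $s_{M+N-1,1}^{(1)}$ up to scalar. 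The inductive step is entirely analogous, each time lowering the first index of the $s^{(1)}$-entry by one.

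Next I would show by ascending induction on $j$ from $1$ up to $i-1$ that bracketing on the left with $E_j$ takes $s_{i+1,j}^{(1)} \cdot C$ to a nonzero scalar multiple of $s_{i+1, j+1}^{(1)} \cdot C'$, again via \eqref{rel: RSS = SSR}. The quantum bracket $\lfloor\, ,\, \rfloor$ is designed precisely so that the $q$-commutator pulls out the correct root-to-root structure constant from the $R$-matrix and cancels all diagonal cross-terms from the Gauss decomposition. After $i-1$ such steps we arrive at an element proportional to $s_{i+1, i}^{(1)}$ times Cartan factors.

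Finally I would perform the last bracket with $E_i$. This step is the one where off-diagonal corrections in $s_{k,k}(z) = K_k^+(z) + \sum_{l < k} f_{kl}^+(z) K_l^+(z) e_{lk}^+(z)$ must be carefully accounted for; the $\BQ$-grading forces only the diagonal contribution to survive. The outcome is (up to nonzero scalar) the degree-one coefficient of $K_i^+(z) (K_{i+1}^+(z))^{-1}$, which by the defining exponential relation $K_k^+(z) = s_{kk}^{(0)} \exp((q_k - q_k^{-1}) \sum_s H_{k,s} z^s)$ equals a nonzero scalar multiple of $h_{i,1} = d_i H_{i,1} - d_{i+1} H_{i+1,1}$.

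The main obstacle is the bookkeeping in the final bracket, where the super signs and the non-diagonal pieces of the Gauss decomposition must conspire to vanish. Equivalently, one needs to control that no lower-weight ``garbage'' of the form $X_{k,0}^- \cdot (\text{Cartan}) \cdot X_{k,0}^+$ survives; this follows either by a direct weight/degree count or, more cleanly, by invoking the commutation formulas $[H_{i,1}, X_{j,n}^{\pm}]$ of \S\ref{sec: Drinfeld} to show that the RHS lies in the centralizer of the appropriate subalgebra, forcing it to coincide with $h_{i,1}$ up to scalar.
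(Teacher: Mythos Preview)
Your outline takes a different route from the paper, and the final step has a genuine gap.

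The paper does not stay in RTT generators. It uses Lemma~\ref{lem: zero node} to rewrite $E_0 \doteq \lfloor X_{1,1}^-, X_{2,0}^-, \ldots, X_{M+N-1,0}^- \rfloor_L\, L_1 \cdots L_{M+N-1}$ and then shows by a short descending induction that the tail satisfies
\[
\lfloor E_j, E_{j+1}, \ldots, E_{M+N-1}, E_0 \rfloor_R \;\doteq\; \lfloor X_{1,1}^-, X_{2,0}^-, \ldots, X_{j-1,0}^- \rfloor_L\, L_1 \cdots L_{j-1}.
\]
For $i=1$ the tail at $j=2$ is just $X_{1,1}^- L_1$, and the outermost bracket with $E_1 = X_{1,0}^+$ is handled directly by the Drinfeld relation $[X_{1,0}^+, X_{1,1}^-] \doteq L_1^{-1} h_{1,1}$ from Theorem~\ref{thm: Ding-Frenkel}; no Gauss bookkeeping is needed. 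For $i \geq 2$ the paper proceeds by induction on $i$: knowing the formula for $h_{i-1,1}$, it computes $[h_{i-1,1}, X_{i,0}^-]$ in two ways. On one hand this is $\doteq X_{i,1}^-$ by the commutation relations listed after Theorem~\ref{thm: Ding-Frenkel}; on the other hand, since $[E_j, X_{i,0}^-] = 0$ for $j \neq i$ while $[E_i, X_{i,0}^-] \doteq L_i - L_i^{-1}$, expanding the nested bracket shows it equals $\lfloor E_{i-1}, \ldots, E_1, E_{i+1}, \ldots, E_0 \rfloor_R\, L_i^{-1}$ up to scalar. Equating the two and then applying $[E_i, X_{i,1}^-] \doteq L_i^{-1} h_{i,1}$ once more finishes the step. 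Every bracket is evaluated via Drinfeld relations that output $\Psi_{i,n}$ or $X_{i,n}^{\pm}$ directly, so non-diagonal Gauss pieces never enter.

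Your step 3 is where the argument breaks down as stated. The centralizer shortcut cannot force the answer to be $h_{i,1}$ up to scalar: the space of $\BQ$-weight zero, $\BZ$-degree one elements in the commutative subalgebra generated by the $H_{j,1}$ is $(M+N)$-dimensional, and $h_{i,1}$ itself does \emph{not} centralize $E_{i-1}$ or $E_{i+1}$ (indeed $[H_{i,1}, X_{i-1,n}^{\pm}] \neq 0$), so no choice of subalgebra whose centralizer contains $h_{i,1}$ will cut this space down to a line. Likewise a weight/degree count cannot kill contributions of the shape $X_{k,0}^- H_{l,1} X_{k,0}^+$, which share both the weight and the $\BZ$-degree of $h_{i,1}$. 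To salvage your route you would have to carry out the explicit RTT computation of $\lfloor E_i, s_{i+1,i}^{(1)} C \rfloor$ and then Gauss-decompose the resulting combination of $s_{ii}^{(1)}$ and $s_{i+1,i+1}^{(1)}$ by hand; this is precisely what the paper's Drinfeld-relation shortcut is designed to avoid.
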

As a second step, we compute the $\Delta(h_{i,1})$.
Introduce the length function $\ell: \BQ_{\geq 0} \longrightarrow \BZ_{\geq 0}$:
\begin{displaymath}
 \ell(\sum_{i\in I_0} n_i \alpha_i) = \sum_{i\in I_0} n_i.
 \end{displaymath}
In the following, when we write $\ell(\alpha)$,  it should be understood implicitly that $\alpha \in \BQ_{\geq 0}$. For $i \in I_0$, let $U_i$ be the subalgebra of $U$ generated by the $E_j$ with $j \in I_0 \setminus \{i\}$. Clearly $U_i$ is a $\BQ$-graded subalgebra. Let us first consider $h_{1,1}$:
\begin{displaymath}
h_{1,1} \doteq \lfloor E_1,E_2,\cdots, E_{M+N-1}, E_0 \rfloor_R.
\end{displaymath}
To compute $\Delta (h_{1,1})$, notice first that 
\begin{displaymath}
\lfloor \Delta E_1, \Delta E_2, \cdots, \Delta E_{M+N-1}, E_0 \otimes L_0^{-1} \rfloor_R = \lfloor E_1,E_2,\cdots, E_{M+N-1},E_0 \rfloor_R \otimes 1.
\end{displaymath}
Note that $|E_0|_Q = -(\alpha_1+\alpha_2+\cdots +\alpha_{M+N-1})$. It follows that 
\begin{eqnarray*}
\Delta(h_{1,1}) &\in & \sum_{i\in I_0} \BC^{\times} \lfloor 1 \otimes E_1,\cdots, 1 \otimes E_{i-1}, E_i \otimes L_i^{-1}, 1 \otimes E_{i+1}, \cdots, 1 \otimes E_{M+N-1}, 1 \otimes E_0 \rfloor_R  \\  
&\ & + 1 \otimes h_{1,1} + h_{1,1} \otimes 1 + \sum_{\ell(\alpha) > 1} U_{\alpha} \otimes U_{-\alpha}.
\end{eqnarray*}
Set $r_{1,i}$ equal to
\begin{displaymath}
E_i \otimes   \lfloor E_1,E_2,\cdots,E_{i-1},E_{i+1},\cdots, E_{M+N-1},E_0 \rfloor_R L_i^{-1}.
\end{displaymath}
Then by definition of quantum brackets $r_{1,i} = 0$ for $i \neq 1,2$, and 
\begin{align*}
& r_{1,1} \doteq E_1 \otimes X_{1,1}^-,   \quad  r_{1,2} \doteq E_2 \otimes \lfloor E_1, \lfloor X_{1,1}^-, X_{2,0}^-\rfloor L_1L_2 \rfloor L_2^{-1} \doteq  E_2 \otimes X_{2,1}^-.
\end{align*}
In other words, we have 
\begin{eqnarray*}  
\Delta (h_{1,1}) &\in & h_{1,1} \otimes 1 + 1 \otimes h_{1,1} +  \sum_{i\in I_0: i \leq 2} \BC^{\times} E_i \otimes X_{i,1}^-  \\
&\ & + \sum_{\ell(\alpha) > 1} (U_1)_{\alpha}  \otimes U_{-\alpha} + \sum_{\ell(\alpha-\alpha_1) > 0} U_{\alpha} \otimes U_{-\alpha}.
\end{eqnarray*}
Similar arguments applied to the $h_{i,1}$ lead to the following coproduct formulas.
\begin{lem} \label{lem: coproduct for degree 1 cartan elements}
Let $1 \leq i \leq M+N-1$. Then for all $s \in I_0$ we have
\begin{eqnarray*}  
\Delta (h_{i,1}) & \in &   h_{i,1} \otimes 1 + 1 \otimes h_{i,1} + \sum_{j \in I_0: i-1\leq j \leq i+1} \BC^{\times} E_j \otimes X_{j,1}^-   \\
&\ & + \sum_{\ell(\alpha) > 1} (U_s)_{\alpha} \otimes U_{-\alpha} + \sum_{\ell(\alpha - \alpha_s) > 0} U_{\alpha} \otimes U_{-\alpha},  \\
\Delta (H_{1,1}) & \in & H_{1,1} \otimes 1 + 1 \otimes H_{1,1} + \BC^{\times} E_1 \otimes X_{1,1}^- + \sum_{\ell(\alpha-\alpha_1)>1} U_{\alpha} \otimes U_{-\alpha}.
\end{eqnarray*}
\end{lem}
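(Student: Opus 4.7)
The plan is to compute $\Delta(h_{i,1})$ by applying $\Delta$ to the quantum bracket expression of Lemma \ref{lem: quantum brackets for degree 1 cartan elements},
\[
h_{i,1} \doteq \lfloor E_i, E_{i-1}, \ldots, E_1, E_{i+1}, \ldots, E_{M+N-1}, E_0 \rfloor_R,
\]
and then following the same bookkeeping that the paragraph preceding the lemma already performs for $h_{1,1}$. Since $\Delta(E_j) = 1 \otimes E_j + E_j \otimes L_j^{-1}$ for every $j \in \{0, 1, \ldots, M+N-1\}$ (recalled in \S\ref{sec: Borel}), bilinearity of the quantum bracket splits the coproduct into $2^{M+N}$ summands indexed by the subset $S$ of chain positions at which the $E_j \otimes L_j^{-1}$ alternative is selected.

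The extreme cases $S = \emptyset$ and $S = \{0,1,\ldots,M+N-1\}$ give $1 \otimes h_{i,1}$ and $h_{i,1} \otimes 1$ respectively; the latter uses the telescoping $L_1 L_2 \cdots L_{M+N-1} L_0 = 1$, which holds because $|h_{i,1}|_Q = 0$. Many mixed summands are killed by the elementary identity $\lfloor 1 \otimes a, b \otimes L_b^{-1} \rfloor = 0$, valid whenever $L_b$ q-commutes with $a$ through the correct weight-shift (a short bilinearity computation); this vanishing propagates outward through the iterated bracket and eliminates every summand whose first-factor weight lies outside $\BQ_{\geq 0}$. The surviving single-$B$ summand at position $j$ factors as a scalar multiple of $E_j \otimes (\text{chain with } E_j \text{ removed}) \cdot L_j^{-1}$; by the Drinfeld commutativity $X_k^+(z) X_{k'}^+(w) = X_{k'}^+(w) X_k^+(z)$ for $|k - k'| \geq 2$ (Theorem \ref{thm: Ding-Frenkel}), this inner bracket vanishes unless $j \in \{i-1, i, i+1\}$, in which case the inductive identity from the proof of Lemma \ref{lem: quantum brackets for degree 1 cartan elements} evaluates it as a scalar times $X_{j,1}^- L_j$, producing the stated contribution $\BC^\times E_j \otimes X_{j,1}^-$.

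For $|S| \geq 2$ the corresponding first-tensor-factor weight is $\alpha = \sum_{j \in S} \alpha_j \in \BQ_{\geq 0}$ with $\ell(\alpha) = |S| \geq 2$. To match the two error spaces for a given $s \in I_0$: since each Drinfeld index appears at most once in the chain, $s \in S$ occurs at most once; if $s \notin S$ the first factor lies in $(U_s)_\alpha$, while if $s \in S$ then $\alpha - \alpha_s = \sum_{j \in S \setminus \{s\}} \alpha_j \in \BQ_{\geq 0}$ has $\ell \geq 1$. The formula for $\Delta(H_{1,1})$ is handled separately and more directly via RTT: from $K_1^+(z) = s_{11}(z)$ one extracts $H_{1,1} = (q - q^{-1})^{-1} (s_{11}^{(0)})^{-1} s_{11}^{(1)}$, and Formula \eqref{for: coproduct for quantum affine superalgebra S} together with Relation \eqref{rel: FRTS zero condition} ($s_{k1}^{(0)} = 0$ for $k > 1$) yields $\Delta(s_{11}^{(1)}) = s_{11}^{(0)} \otimes s_{11}^{(1)} + s_{11}^{(1)} \otimes s_{11}^{(0)} + \sum_{k > 1} s_{1k}^{(0)} \otimes s_{k1}^{(1)}$; the $k = 2$ piece rewrites (after inverting the diagonal Cartan factors) as $\BC^\times E_1 \otimes X_{1,1}^-$, and the $k \geq 3$ pieces have first factor of weight $\alpha_1 + \cdots + \alpha_{k-1}$ which gives $\ell(\alpha - \alpha_1) \geq 1$.

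The principal obstacle is the case analysis in the second paragraph: carefully verifying that the inner bracket obtained by deleting a single $E_j$ vanishes for $j \notin \{i-1, i, i+1\}$ and evaluates to $\BC^\times X_{j,1}^- L_j$ for $j \in \{i-1, i, i+1\}$. This requires a chain-by-chain induction on Serre-type q-commutations among the $E_k$, modelled directly on the explicit inductive arguments for $X_{i,1}^-$ and $h_{i,1}$ already carried out in the proof of Lemma \ref{lem: quantum brackets for degree 1 cartan elements}.
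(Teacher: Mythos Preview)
Your approach for $h_{i,1}$ is essentially identical to the paper's: apply $\Delta$ to the quantum-bracket expression of Lemma~\ref{lem: quantum brackets for degree 1 cartan elements}, expand each $\Delta E_j = 1\otimes E_j + E_j\otimes L_j^{-1}$, and sort the $2^{M+N}$ summands by the subset $S$ of positions where the second alternative is chosen. The paper carries this out explicitly for $h_{1,1}$ (obtaining the terms $r_{1,i}$) and then invokes ``similar arguments'' for general $i$; your write-up is a faithful sketch of that same computation, and your handling of the $|S|\geq 2$ error terms (splitting on whether $s\in S$) is exactly the refinement needed to get the $s$-dependent form of the error space.

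One point to flag: for $H_{1,1}$ you take a direct RTT route via $K_1^+(z)=s_{11}(z)$, which is cleaner than going through quantum brackets, but it only yields the error bound $\ell(\alpha-\alpha_1)\geq 1$ (the $k=3$ piece $(s_{11}^{(0)})^{-1}s_{13}^{(0)}\otimes(s_{11}^{(0)})^{-1}s_{31}^{(1)}$ has first-factor weight $\alpha_1+\alpha_2$, so $\ell(\alpha-\alpha_1)=1$). The lemma as printed asks for $\ell(\alpha-\alpha_1)>1$; your weaker bound is nonetheless all that is actually used downstream (in the $i=1$ case of Lemma~\ref{lem: coproduct for higher degree positive root vectors} one only needs $\ell(\alpha-\alpha_1)>0$), so this is either a harmless typo in the statement or a place where the paper silently uses a sharper vanishing that your RTT shortcut does not see. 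Either way it does not affect the applications.
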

For the second part of the lemma, by Gauss decomposition $K_1^+(z) = H_1(z)$ we have $H_{1,1} = \frac{1}{q-q^{-1}} (s_{11}^{(0)})^{-1}s_{11}^{(1)}$. The coproduct estimation $\Delta(H_{1,1})$ comes from the explicit formula $\Delta(s_{11}^{(1)})$ and from the identities $E_1 = (s_{11}^{(0)})^{-1}s_{12}^{(0)}$ and $X_{1,1}^- = - s_{21}^{(1)}(s_{11}^{(0)})^{-1}$. 

The third step is to estimate the $\Delta(X_{i,n}^+)$ with $n \geq 0$. For $i \in I_0$, introduce
\begin{displaymath}
\sum_{n \geq 0} \Psi_{i,n} z^n := K_{i+1}^+(z)K_i^+(z)^{-1} \in U[[z]]
\end{displaymath}
 Let $A_i$ be the subalgebra of $U$ generated by the $L_i,\Psi_{i,n}$ for $n \in \BZ_{\geq 0}$. 
\begin{lem} \label{lem: coproduct for higher degree positive root vectors}
For $i \in I_0$ and $n \in \BZ_{\geq 0}$, we have
\begin{equation}  \label{equ: coproduct for X+}
\Delta (X_{i,n}^+) - 1 \otimes X_{i,n}^+ \in \sum_{m=0}^n X_{i,m}^+ \otimes A_i + \sum_{\ell(\alpha) > 1} (U_i)_{\alpha} \otimes U_{\alpha_i-\alpha} + \sum_{\ell(\alpha - \alpha_i)  > 0} U_{\alpha} \otimes U_{\alpha_i-\alpha}.
\end{equation}
\end{lem}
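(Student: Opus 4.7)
The plan is to proceed by induction on $n \geq 0$, exploiting the fact that $X_{i,n+1}^+$ is (up to scalar) the commutator of $X_{i,n}^+$ with a suitable degree-one Cartan element $h$, and then expanding $\Delta(X_{i,n+1}^+) = c^{-1}[\Delta(h), \Delta(X_{i,n}^+)]$ using the induction hypothesis together with Lemma \ref{lem: coproduct for degree 1 cartan elements}.

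The base case $n = 0$ is immediate: $X_{i,0}^+ = E_i$ and $\Delta(E_i) = 1 \otimes E_i + E_i \otimes L_i^{-1}$, where $L_i^{-1} \in A_i$. For the inductive step I will take $h = H_{1,1}$ when $i = 1$ and $h = h_{i-1,1}$ when $i \geq 2$, so that $[h, X_{i,n}^+] = c\, X_{i,n+1}^+$ with $c \in \BC^\times$; for the latter case I invoke Lemma \ref{lem: coproduct for degree 1 cartan elements} with the choice $s = i$. Writing the induction hypothesis as
\[
\Delta(X_{i,n}^+) = 1 \otimes X_{i,n}^+ + \sum_{m=0}^{n} X_{i,m}^+ \otimes a_m + R,
\]
with $a_m \in A_i$ and $R$ in the last two buckets, and
\[
\Delta(h) = h \otimes 1 + 1 \otimes h + \sum_{j} \lambda_j\, E_j \otimes X_{j,1}^- + T,
\]
(sum over $j \in \{i-2,i-1,i\} \cap I_0$ or $j = 1$ respectively, with $T$ in the analogous two ``large'' buckets for $h$), I will expand the graded commutator term by term using the super-Leibniz rule $[x_1 \otimes y_1, x_2 \otimes y_2] = \pm([x_1,x_2] \otimes y_1 y_2 \pm x_2 x_1 \otimes [y_1,y_2])$.

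The key identifications are straightforward: $[1 \otimes h,\, 1 \otimes X_{i,n}^+]$ delivers the leading $c \cdot 1 \otimes X_{i,n+1}^+$ term; $[h \otimes 1,\, X_{i,m}^+ \otimes a_m] \propto X_{i,m+1}^+ \otimes a_m$ feeds the first bucket; and $[1 \otimes h,\, X_{i,m}^+ \otimes a_m] = 0$ because $A_i$ is generated by Cartan-like elements that commute with $h$. The cross term with $j = i$ in $\Delta(h)$ contributes $[E_i \otimes X_{i,1}^-, 1 \otimes X_{i,n}^+] \propto E_i \otimes [X_{i,1}^-, X_{i,n}^+]$, which by the Drinfeld $[X^+,X^-]$ relation in Theorem \ref{thm: Ding-Frenkel} equals $X_{i,0}^+ \otimes (\text{element of } A_i)$ and therefore also falls into the first bucket. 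All remaining contributions will be shown to land in the last two buckets via $\BQ$-weight tracking: commuting with $1 \otimes X_{i,n}^+$ shifts the second-factor $\BQ$-weight by $\alpha_i$ while preserving the first, commuting with $E_j \otimes X_{j,1}^-$ adds $\alpha_j$ to the first factor (and $-\alpha_j$ to the second), and $T$ already sits in the right graded pieces shifted appropriately.

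The main obstacle is the bookkeeping for the cross-term $\sum_j \lambda_j E_j \otimes X_{j,1}^-$ and the tail $T$: one must check that once the first-factor $\BQ$-weight $\beta$ ceases to lie in $U_i$ (for instance after bracketing with $E_i$, which introduces an $\alpha_i$ summand), the resulting term automatically satisfies $\ell(\beta - \alpha_i) > 0$ and hence is absorbed by the third bucket $\sum_{\ell(\alpha-\alpha_i)>0} U_\alpha \otimes U_{\alpha_i - \alpha}$. This observation, together with the fact that the $U_i$-summand in $T$ has $\ell(\beta) > 1$ (so that $\ell(\beta + \alpha_j) > 1$ remains true after bracketing with $E_j$ for $j \neq i$, preserving membership in $(U_i)$), completes the verification. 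The argument for $i = 1$ with $h = H_{1,1}$ is strictly simpler because only the $j = 1$ cross term appears and the tail of $\Delta(H_{1,1})$ already has $\ell(\alpha - \alpha_1) > 1$.
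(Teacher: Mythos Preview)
Your approach is the paper's: induct on $n$, shift the loop index via $[h_{i-1,1},\,\cdot\,]$ (respectively $[H_{1,1},\,\cdot\,]$ when $i=1$), and apply Lemma~\ref{lem: coproduct for degree 1 cartan elements} with $s=i$ to control the tail of $\Delta(h)$.

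One small gap to patch: for the cross terms with $j \neq i$ (i.e.\ $j \in \{i-2,i-1\}\cap I_0$), pure weight-tracking does \emph{not} dispose of $[E_j \otimes X_{j,1}^-,\, 1 \otimes X_{i,n}^+]$. Its first tensor factor has weight $\alpha_j$, and neither $\ell(\alpha_j) > 1$ nor $\alpha_j - \alpha_i \in \BQ_{\geq 0}$ holds, so this term lands in neither of your two buckets. The paper resolves this by observing directly that the commutator vanishes: $[X_{j,1}^-, X_{i,n}^+] = 0$ for $j \neq i$ by the relation $[X_i^+(z), X_j^-(w)] = \delta_{ij}(\cdots)$ in Theorem~\ref{thm: Ding-Frenkel}. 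Once you insert that remark, the rest of your bookkeeping goes through exactly as in the paper.
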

\begin{proof}
Let us assume first $2 \leq i \leq M+N-1$. Then  $[h_{i-1,1}, X_{i,n}^+] = c_i X_{i,n+1}^+$ for $n \in \BZ_{\geq 0}$. We prove the above coproduct formula by induction on $n \in \BZ_{\geq 0}$. Clearly,
\begin{displaymath}
\Delta(X_{i,0}^+) = 1 \otimes X_{i,0}^+ + X_{i,0}^+ \otimes L_i^{-1} \in 1 \otimes X_{i,0}^+ + X_{i,0}^+ \otimes A_i.
\end{displaymath}
Assume that the coproduct formula \eqref{equ: coproduct for X+} is true for $n$. Remark that for $j \in I_0 \setminus \{i\}$
\begin{displaymath}
[E_j \otimes X_{j,1}^-, 1 \otimes X_{i,0}^+] = 0.
\end{displaymath}
Lemma \ref{lem: coproduct for degree 1 cartan elements} applied to $h_{i-1,1}$ with $s = i$ gives  
\begin{eqnarray*}
\Delta (c_i X_{i,n+1}^+) &\in &  c_i 1\otimes X_{i,n+1}^+ +  \BC^{\times} E_i \otimes [X_{i,n}^+, X_{i,1}^-] \\
&\ & + \sum_{m=0}^n [h_{i-1,1}, X_{i,m}^+] \otimes A_i + \sum_{\ell(\alpha)>1} (U_i)_{\alpha} \otimes U_{\alpha_i-\alpha} + \sum_{\ell(\alpha-\alpha_i) > 0} U_{\alpha} \otimes U_{\alpha_i-\alpha}  \\
&\subseteq & c_i 1 \otimes X_{i,n+1}^+ + \sum_{m=0}^{n+1} X_{i,m}^+ \otimes A_i \\
&\ & + \sum_{\ell(\alpha) > 1} (U_i)_{\alpha} \otimes U_{\alpha_i-\alpha} + \sum_{\ell(\alpha-\alpha_i) > 0} U_{\alpha} \otimes U_{\alpha_i-\alpha}.
\end{eqnarray*}
This establishes Equation \eqref{equ: coproduct for X+}. 

Next, when $i = 1$,  we use the relation $[H_{i,1}, X_{1,n}^{\pm}] = \pm c_1 X_{1,n+1}^{\pm}$ and the coproduct formula $\Delta(H_{1,1})$ in Lemma \ref{lem: coproduct for degree 1 cartan elements}.
The rest is parallel as in the case $i > 1$.
\end{proof}
Lemma \ref{lem: coproduct for higher degree positive root vectors} can be viewed as a refinement of Equation \eqref{equ: coproduct X}. In a similar way, it is not difficult to prove Equation \eqref{equ: coproduct Y} by using Lemma \ref{lem: coproduct for degree 1 cartan elements}.

As the final step, let us prove Equation \eqref{equ: coproduct H}. 
\begin{cor}  \label{cor: first estimation of coproduct for H}
For $i \in I_0$ and $n \in \BZ_{\geq 0}$ we have
\begin{equation}   \label{equ: coproduct for higher degree cartan elements}
\Delta (\Psi_{i,n}) \in A_i \otimes A_i + \sum_{\ell(\alpha) > 0} U_{\alpha} \otimes U_{-\alpha}.
\end{equation}
\end{cor}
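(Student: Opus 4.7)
The strategy is to express $\Psi_{i,n}$ in terms of the Cartan currents $K_i^+(z)$ and $K_{i+1}^+(z)$, whose coproduct is controlled by Equation \eqref{equ: coproduct H}, and to propagate this estimate through a product and an inversion of power series.

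First, combining the exponential formulas defining $K_j^+(z)$ and the generating series $\Psi(z) := \sum_{n \geq 0} \Psi_{i,n} z^n$ stated earlier in this appendix, with the commutativity of all Cartan currents (Theorem \ref{thm: Ding-Frenkel}) and the identity $L_i^{-1} = (s_{ii}^{(0)})^{-1} s_{i+1,i+1}^{(0)}$, a direct manipulation of exponentials gives the factorization
\[
\Psi(z) = K_{i+1}^+(z)\, K_i^+(z)^{-1} \in U_q(\Gaff)[[z]].
\]
Setting $\mathcal{N} := \sum_{\ell(\alpha) > 0} (U_q(\Gaff))_{\alpha} \otimes (U_q(\Gaff))_{-\alpha}$, the current form of Equation \eqref{equ: coproduct H} reads
\[
\Delta(K_j^+(z)) - K_j^+(z) \otimes K_j^+(z) \in \mathcal{N}[[z]] \qquad (j \in I).
\]

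Next, I will check that the class $(U_q(\Gaff))_0 \otimes (U_q(\Gaff))_0 + \mathcal{N}$ is stable under two operations on $(U_q(\Gaff) \otimes U_q(\Gaff))[[z]]$: multiplication of formal power series, and formal inversion of series whose constant term is invertible and group-like. Stability under multiplication is an elementary weight-grading argument, since $(U_q(\Gaff))_{\alpha} (U_q(\Gaff))_{\beta} \subseteq (U_q(\Gaff))_{\alpha+\beta}$ keeps the weight of the first tensor factor inside $\BQ_{\geq 0}$. For inversion, write $\Delta(K_i^+(z)) = (K_i^+(z) \otimes K_i^+(z))(1 \otimes 1 + Y(z))$ with $Y(z) \in \mathcal{N}[[z]]$; since $K_{i,0}^+ = s_{ii}^{(0)}$ is group-like, $Y(0) = 0$, and the formal geometric series $(1 \otimes 1 + Y(z))^{-1} = \sum_{k \geq 0}(-Y(z))^k$ is well-defined order by order in $z$, yielding
\[
\Delta(K_i^+(z)^{-1}) - K_i^+(z)^{-1} \otimes K_i^+(z)^{-1} \in \mathcal{N}[[z]].
\]

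Multiplying the resulting estimates for $\Delta(K_{i+1}^+(z))$ and $\Delta(K_i^+(z)^{-1})$ gives
\[
\Delta(\Psi(z)) - \Psi(z) \otimes \Psi(z) \in \mathcal{N}[[z]],
\]
and extracting the coefficient of $z^n$ produces
\[
\Delta(\Psi_{i,n}) - \sum_{a+b = n,\ a,b \geq 0} \Psi_{i,a} \otimes \Psi_{i,b} \in \mathcal{N}.
\]
The main term lies in $A_i \otimes A_i$ because each $\Psi_{i,k}$ is one of the defining generators of $A_i$, completing the proof. The only non-routine point is the inversion step, which hinges on the group-likeness of the constant term $s_{ii}^{(0)}$ to guarantee that the formal geometric series can be computed order by order in $z$ as finite sums of weight components.
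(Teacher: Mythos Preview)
Your argument is circular. You invoke Equation~\eqref{equ: coproduct H} for $K_i^+(z)$ and $K_{i+1}^+(z)$, but in the paper's logical flow this equation is part of Proposition~\ref{prop: Drinfeld coproduct estimation}, whose proof is completed in \S\ref{sec: proof of coproduct formulas} \emph{using} Corollary~\ref{cor: first estimation of coproduct for H}. The only case of \eqref{equ: coproduct H} that is available before the corollary is $j=1$, where $K_1^+(z)=s_{11}(z)$ and the estimate follows directly from the RTT coproduct formula~\eqref{for: coproduct for quantum affine superalgebra S}. For $j>1$ nothing of the sort is yet established, and your factorization $\Psi_i(z)=K_{i+1}^+(z)K_i^+(z)^{-1}$ requires precisely those cases.

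The paper avoids this circularity by taking a different route: it writes $\Psi_{i,n}\doteq[X_{i,n}^+,X_{i,0}^-]$ for $n>0$ and computes $[\Delta(X_{i,n}^+),\Delta(X_{i,0}^-)]$ using the refined estimate~\eqref{equ: coproduct for X+} of Lemma~\ref{lem: coproduct for higher degree positive root vectors}, which was proved independently of~\eqref{equ: coproduct H} via the inductive mechanism of Lemmas~\ref{lem: quantum brackets for degree 1 cartan elements}--\ref{lem: coproduct for degree 1 cartan elements}. The key point there is that the ``bad'' terms in $\Delta(X_{i,n}^+)$ lying in $(U_i)_\alpha\otimes U_{\alpha_i-\alpha}$ commute with $X_{i,0}^-\otimes 1$ because $X_{i,0}^-$ commutes with the subalgebra $U_i$. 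Your product-and-inversion manipulation is formally correct, and in fact yields the sharper statement $\Delta(\Psi_{i,n})-\sum_m\Psi_{i,m}\otimes\Psi_{i,n-m}\in\mathcal{N}$ that \S\ref{sec: proof of coproduct formulas} extracts afterwards; it could be salvaged by an induction on $j$ (using the corollary for $\Psi_{j-1}$ to pass from $K_{j-1}^+$ to $K_j^+$), but as written it assumes what it must help prove.
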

\begin{proof}
For $n = 0$, this is clear since $\Psi_{i,0} = L_i^{-1}$. For $n > 0$, by Theorem \ref{thm: Ding-Frenkel} we have 
\begin{displaymath}
\Psi_{i,n} = (q_i-q_i^{-1})^{-1} [X_{i,n}^+, X_{i,0}^-].
\end{displaymath}
It is enough to consider the bracket $[\Delta(X_{i,n}^+), \Delta (X_{i,0}^+)]$. Remark that by definition $X_{i,0}^-$ commutes with $U_i$.
Now Equation \eqref{equ: coproduct for higher degree cartan elements} follows from Equation \eqref{equ: coproduct for X+}.
\end{proof}
 To prove Equation \eqref{equ: coproduct H}, in view of the explicit coproduct formula for $K_1^+(z) = s_{11}(z)$ in Equation \eqref{for: coproduct for quantum affine superalgebra S}, it is enough to show that: for $i \in I_0$ and $n \in \BZ_{\geq 0}$
\begin{displaymath}
\Delta (\Psi_{i,n}) \in \sum_{m=0}^n \Psi_{i,m} \otimes \Psi_{i,n-m}  + \sum_{\ell(\alpha) > 0} U_{\alpha} \otimes U_{-\alpha}.
\end{displaymath}
Clearly, $\Delta (\Psi_{i,0}) = \Psi_{i,0} \otimes \Psi_{i,0}$. In view of Corollary \ref{cor: first estimation of coproduct for H} let us define $\Delta_i(\Psi_{i,n}) \in A_i \otimes A_i$ to be such that $\Delta(\Psi_{i,n}) - \Delta_i(\Psi_{i,n}) \in \sum_{\ell(\alpha) > 0} U_{\alpha} \otimes U_{-\alpha}$. From the highest $\ell$-weight representation theory (\S \ref{sec: highest l weight}) of the quantum affine superalgebra $U$ we observe that the subalgebra $A_i$ is an algebra of Laurent polynomials:
\begin{displaymath}
A_i = \BC[\Psi_{i,n} : n \in \BZ_{> 0}][\Psi_{i,0},\Psi_{i,0}^{-1}].
\end{displaymath}
So is the tensor algebra $A_i \otimes A_i$. It follows that an element $x \in A_i \otimes A_i$ is completely determined by the data $\chi \times \mu (x)$ where $\chi, \mu$ are algebra homomorphisms $A_i \longrightarrow \BC$. Since $\sum_{\ell(\alpha) > 0} U_{\alpha} \otimes U_{-\alpha}$ always annihilates the tensor product of two highest $\ell$-weight vectors, 
\begin{displaymath}
\chi \times \mu (\Delta_i(\Psi_{i,n})) = \chi \times \mu(\Delta(\Psi_{i,n}))  = \sum_{m=0}^n \chi(\Psi_{i,m}) \mu(\Psi_{i,n-m}) = \chi \times \mu (\sum_{m=0}^n \Psi_{i,m} \otimes \Psi_{i,n-m}).
\end{displaymath}
It follows that $\Delta_i(\Psi_{i,n}) = \sum_{m=0}^n \Psi_{i,m} \otimes \Psi_{i,n-m}$. \hfill $\Box$


\begin{thebibliography}{ASM}
\bibitem[AK97]{AK}{T. Akasaka and M. Kashiwara},
\newblock {\it Finite-dimensional representations of quantum affine algebras},
\newblock Publ. RIMS, Kyoto Univ. \textbf{33} (1997), 839-867.

\bibitem[ALT10]{ADT}{G. Arutyunov, M. De Leeuw and A. Torrielli},
\newblock {\it On Yangian and long representations of the centrally extended $\mathfrak{su}(2|2)$ superalgebra},
\newblock JHEP \textbf{06} (2010), 033.

\bibitem[Be06]{Beisert4}{N. Beisert},
\newblock {\it The S-matrix of AdS/CFT and Yangian symmetry},
\newblock PoS SOLVAY (2006) 002, arXiv:0704.0400.

\bibitem[Be07]{Beisert6}{N. Beisert},
\newblock {\it The analytic Bethe ansatz for a chain with centrally extended $\mathfrak{su}(2|2)$ symmetry},
\newblock J. Stat. Mech. (2007), P01017.

\bibitem[Be12]{Beisert1}{N. Beisert et al.},
\newblock {\it Review of AdS/CFT integrability: an overview},
\newblock Lett. Math. Phys. \textbf{99} (2012), 3-32.

\bibitem[BL14]{Beisert5}{N. Beisert and M. De Leeuw},
\newblock {\it The RTT-realization for the deformed $\mathfrak{gl}(2|2)$ Yangian},
\newblock J. Phys. A: Math. Theor. \textbf{47}(2014), 305201.

\bibitem[BGM12]{Beisert3}{N. Beisert, W. Galleas and T. Matsumoto},
\newblock {\it A quantum affine algebra for the deformed Hubbard chain},
\newblock J. Phys. A: Math. Theor. \textbf{45} (2012), 365206.

\bibitem[BK08]{Beisert2}{N. Beisert and P. Koroteev},
\newblock {\it Quantum deformations of the one-dimensional Hubbard model},
\newblock J. Phys. A: Math. Theor. \textbf{41} (2008), 255204.

\bibitem[BKK00]{BKK}{G. Benkart, S. Kang and M. Kashiwara},
\newblock {\it Crystal bases for the quantum superalgebra $U_q(\mathfrak{gl}(m,n))$},
\newblock J. Amer. Math. Soc. {\bf 13} (2000), 295-331. 

\bibitem[BT08]{BT}{V. Bazhanov and Z. Tsuboi},
\newblock {\it Baxter's $Q$-operators for supersymmetric spin chains},
\newblock Nuclear Phys. \textbf{B 805} (2008), 451-516.

\bibitem[Ch02]{Chari}{V. Chari},
\newblock {\it Braid group actions and tensor products},
\newblock Int. Math. Res. Not. {\bf 2002} (2002), no. 7, 357-382.

\bibitem[CH10]{CH}{V. Chari and D. Hernandez},
\newblock {\it Beyond Kirillov-Reshetikhin modules},
\newblock Contemp. Math. \textbf{506} (2010), 49-81.

\bibitem[CP91]{CP1}{V. Chari and A. Pressley}, 
\newblock {\it Quantum affine algebras},
\newblock Commun. Math. Phys. {\bf 142} (1991), 261-283.

\bibitem[CWWZ]{CWWZ}{J. Cai, S. Wang, K. Wu and W. Zhao},
\newblock {\it Drinfel'd realization of quantum affine superalgebra $U_q(\widehat{\mathfrak{gl}(1|1)})$},
\newblock Preprint arXiv:9703022.

\bibitem[Da98]{Damiani}{I. Damiani}, 
\newblock {\it La $\mathcal{R}$-matrice pour les alg\`{e}bres quantiques de type affine non tordu},
\newblock  Ann. Sci. Ecole Norm. Sup. {\bf 31} (1998), 493-523.

\bibitem[DF93]{Ding-Frenkel} {J. Ding and I. Frenkel},
\newblock {\it Isomorphism of two realizations of quantum affine algebra $U_q(\widehat{\mathfrak{gl}(n)})$},
\newblock Commun. Math. Phys. {\bf 156} (1993), 277-300.


\bibitem[FH13]{FH}{E. Frenkel and D. Hernandez},
\newblock {\it Baxter's relations and spectra of quantum integrable models}, 
\newblock Duke Math. J., to appear, Preprint arXiv:1308.3444.

\bibitem[FM02]{FM}{E. Frenkel and E. Mukhin},
\newblock {\it The Hopf algebra $\textrm{Rep} (U_q\widehat{\mathfrak{gl}_{\infty}})$},
\newblock Sel. Math., New Ser. {\bf 8} (2002), no. 4, 537-635.

\bibitem[FR99]{FR}{E. Frenkel and N. Reshetikhin},
\newblock {\it The $q$-character of representations of quantum affine algebras and deformations of $\mathcal{W}$-algebras},
\newblock Recent Developments in Quantum Affine Algebras and related topics, Cont. Math. {\bf 248} (1999), 163-205. 

\bibitem[FRT90]{FRT} {L. Faddeev, N. Reshetikhin and L. Takhtajan}, 
\newblock {\it Quantization of Lie groups and Lie algebras},
\newblock Leningrad Math. J. {\bf 1} (1990), 193-225.

\bibitem[FRT89]{FRT2}{L. Faddeev, N. Reshetikhin and L. Takhtajan},
\newblock {\it Quantization of Lie groups and Lie algebras},
\newblock Yang-Baxter equation in Integrable Systems, Advanced Series in Mathematical Physics, Vol {\bf 10}, Singapore (1989), 299-309.

\bibitem[He10]{Hernandez}{D. Hernandez},
\newblock {\it Simple tensor products},
\newblock Invent. Math. {\bf 181} (2010), 649-675.

\bibitem[HJ12]{HJ} {D. Hernandez and M. Jimbo},
\newblock {\it Asymptotic representations and Drinfeld rational fractions},
\newblock  Compos. Math. {\bf 148} (2012), no. 5, 1593-1623. 

\bibitem[HL10]{HL}{D. Hernandez and B. Leclerc},
\newblock {\it Cluster algebras and quantum affine algebras},
\newblock Duke Math. J. {\bf 154} (2010), no. 2, 265-341.

\bibitem[Ka02]{Kashiwara}{M. Kashiwara},
\newblock {\it On level-zero representations of quantized affine algebras},
\newblock Duke Math. J. {\bf 112} (2002), 117-175.

\bibitem[Ko13]{Kojima}{ T. Kojima},
\newblock {\it Diagonalization of transfer matrix of supersymmetry $U_q(\widehat{\mathfrak{sl}}(M+1|N+1))$ chain with a boundary},
\newblock {J. Math. Phys. \textbf{54} (2013), 043507 }

\bibitem[KRT97]{Rosso} {C. Kassel, M. Rosso and V. Turaev},
\newblock {\it Quantum groups and knot invariants},
\newblock Panoramas et Synth\`{e}ses {\bf 5}, Soci\'{e}t\'{e} Math\'{e}matique de France, Paris, 1997.

\bibitem[Le11]{L}{B. Leclerc},
\newblock {\it Quantum loop algebras, quiver varieties, and cluster algebras},
\newblock in: Representations of algebras and related topics, EMS Ser. Cong. Rep., European Mathematical Society, Z\"{u}rich (2011), 117-152.

\bibitem[MM14]{Molev1}{T. Matsumoto and A. Molev},
\newblock {\it Representations of centrally extended Lie superalgebra $\mathfrak{psl}(2|2)$},
\newblock J. Math. Phys. {\bf 55} (2014), 091704.

\bibitem[MRS03]{Molev}{A. Molev, E. Ragoucy and P. Sorba},
\newblock {\it Coideal subalgebras in quantum affine algebras},
\newblock Rev. Math. Phys. {\bf 15} (2003), 789-822.

\bibitem[MY14]{Mukhin}{E. Mukhin and C. Yang}, 
\newblock {\it Affinization of category $\mathcal{O}$ for quantum groups},
\newblock Trans. Amer. Math. Soc. 2014, arXiv:1204.2769.


\bibitem[PS81]{Perk-Schultz}{J. Perk and C. Schultz},
\newblock {\it New families of commuting transfer matrices in $q$-state vertex models},
\newblock Phys. Lett. {\bf 84 A} (1981), 407-410.


\bibitem[Ts14]{Tsuboi}{Z. Tsuboi},
\newblock {\it Asymptotic representations and $q$-oscillator solutions of the graded Yang-Baxter equation related to Baxter $Q$-operators},
\newblock Nuclear Phys. {\bf B 886} (2014), 1-30.

\bibitem[VV02]{VV}{M. Varagnolo and E. Vasserot},
\newblock {\it Standard modules of quantum affine algebras},
\newblock Duke Math. J. \textbf{111} (2002), 509-533.




\bibitem[Zh14]{Z} {H. Zhang},
\newblock {\it Representations of quantum affine superalgebras},
\newblock {Math. Z. {\bf 278} (2014), 663-703.}

\bibitem[Zh]{Z2}{H. Zhang},
\newblock {\it Asymptotic representations of quantum affine superalgebras},
\newblock Preprint arXiv:1410.0837.



\bibitem[Zr95]{Zrb1}{R. Zhang},
\newblock {\it Representations of super Yangian},
\newblock J. Math. Phys. {\bf 36} (1995), 3854-3865.

\bibitem[Zr96]{Zrb2}{R. Zhang},
\newblock {\it The $\mathfrak{gl}(M|N)$ super Yangian and its finite-dimensional representations},
\newblock Lett. Math. Phys. {\bf 37} (1996), 419-434.

\bibitem[Zy97]{Zyz}{Y. Zhang},
\newblock {\it Comments on the Drinfeld realization of quantum affine superalgebra $U_q(\mathfrak{gl}(m|n)^{(1)})$ and its  Hopf algebra structure}, 
\newblock J. Phys. A: Math. Gen. {\bf 30} (1997), 8325-8335.
\end{thebibliography}
\end{document}